\newtheorem{theorem}{Theorem}[section]
\newtheorem{lemma}[theorem]{Lemma}
\newtheorem{corollary}[theorem]{Corollary}
\newtheorem{proposition}[theorem]{Proposition}
\theoremstyle{definition}
\newtheorem{definition}[theorem]{Definition}
\newtheorem{remark}[theorem]{Remark}
\newtheorem{example}[theorem]{Example}
\numberwithin{equation}{section}
\newcommand{\Z}{{\mathbb Z}}
\newcommand{\T}{{\mathbb T}}
\newcommand{\N}{{\mathbb N}}
\newcommand{\R}{{\mathbb R}}
\newcommand{\dbar}{d\hspace*{-0.08em}\bar{}\hspace*{0.1em}}
\newcommand{\supp}{\mathop{\mathrm{supp}}}
\newcommand{\var}{\mathop{\mathrm{Var}}}
\newcommand{\Keywords}[1]{{\footnotesize\noindent\textit{Keywords:}
  \parbox[t]{100mm}{\raggedright\footnotesize#1}}\vspace{.5pc}}
\newcommand{\ams}[2]{{\footnotesize\noindent AMS 2010 \textit{Mathematics subject
classification:} Primary #1\\[-2pt]\phantom{\footnotesize\noindent AMS 2010
\textit{Mathematics subject classification:}} Secondary #2\vspace{1pc}}}
\renewcommand{\epsilon}{\varepsilon}
\renewcommand{\tilde}[1]{\widetilde{#1}}
\begin{document}
\title[Operator-valued Fourier multipliers ]{Operator-valued Fourier multipliers on periodic Besov spaces\footnote{This work
was partially supported by COLCIENCIAS, Grant 121556933488.}}
\author{\ \ B. Barraza Mart\'inez}
\address{B. Barraza Martinez, Universidad del Norte, Departamento de Matem\'aticas,  Barranquilla (Colombia)}
\email{bbarraza@uninorte.edu.co}

\author{I. Gonz\'alez Mart\'inez}
\address{I. Gonz\'alez Mart\'inez, Cinvestav, Departamento de Matem\'aticas, Mexico D.F. (Mexico)}
\email{idgonzalez@math.cinvestav.mx}

\author{J. Hern\'andez Monz\'on}
\address{J. Hern\'andez Monz\'on, Universidad del Norte, Departamento de Matem\'aticas,  Barranquilla (Colombia)}
\email{jahernan@uninorte.edu.co}

\date{\today}

\maketitle

\begin{abstract}
We prove in this paper that a sequence $M:\mathbb{Z}^{n}\to\mathcal{L}(E)$ of bounded variation is a Fourier multiplier on the Besov space $B_{p,q}^{s}(\mathbb{T}^{n},E)$ for $s\in\R$, $1<p<\infty$, $1\leq q\leq\infty$ and $E$ a Banach space, if and only if $E$ is a
UMD-space. This extends in some sense the Theorem 4.2 in \cite{AB04} to the $n-$dimensional case. The result is used to obtain existence and uniqueness of solution for some Cauchy problems with periodic boundary conditions.\\
\end{abstract}

\Keywords{Fourier multipliers, operator-valued symbols, UMD-spaces, toroidal Besov spaces.}

\ams{42A45}{47A56}

\section{Introduction}

We are interested in obtaining a Fourier multiplier theorem on the periodic
Besov spaces $B_{p,q}^{s}\left(  \mathbb{T}^{n},E\right)  $ for $s\in
\mathbb{R}$, $1<p<\infty,$ $1\leq q\leq\infty$ and $E$ a $UMD-$space, with
discrete symbols satisfaying a bounded variation condition\ similar to the
introduced in \cite{Zim89} and \cite{SW07}. Thus we can consider Cauchy problems
with periodic boundary conditions. To reach this goal we give an extension of Theorem 4.2 in \cite{AB04} and then we analyze the following two Cauchy problems:
\begin{equation}
\left\{
\begin{array}
[c]{rr}%
\partial_{t}u(t,x)+A(t)u(t,x)=f(t,x)\text{,} & t\in\left(  0,T\right]  \text{,
}x\in\mathbb{T}^{n}\text{,}\\
u(0,x)=u_{0}(x)\text{,} & x\in\mathbb{T}^{n}\text{,}%
\end{array}
\right.  \label{Prob1}%
\end{equation}
and%
\begin{equation}
\left\{
\begin{array}
[c]{rr}%
\partial_{t}u(t,x)+A_{\omega}u(t,x)=f(t,x)\text{,} & t\in\left[
0,2\pi\right]  \text{, }x\in\mathbb{T}^{n}\text{,}\\
u(0,x)=u(2\pi,x)\text{,} & x\in\mathbb{T}^{n}\text{,}%
\end{array}
\right.  \label{Prob2}%
\end{equation}
where $T>0$ and $A(t)$ in (\ref{Prob1}) is a family of uniformly normal
elliptic differential operators given by%
\begin{equation}
A(t):=\sum_{\left\vert \alpha\right\vert \leq m}a_{\alpha}\left(  t\right)
D^{\alpha}\text{.}\label{Glei differoperat}%
\end{equation}
Here $m\in\mathbb{N}$, $a_{\alpha}\in C_{b}\left(  \left[  0,\infty\right)
,\mathcal{L}\left(  E\right)  \right)  $ for $\left\vert \alpha\right\vert
\leq m$, $E$ is a UMD-space and $D_{j}:=-i\partial_{j}$. For the
Problem (\ref{Prob2}), $A_{\omega}:=\omega+A$, where $A$ is as in
(\ref{Glei differoperat}) but with constant coefficients and $\omega
\geq\omega_{0}$ with $\omega_{0}$ appropiated. We stress that in \cite{ABF08} the autor use the evolution equation (\ref{Prob1}) with $n=3$ and
$A(t)=-\Delta$ to investigate the concentration of a pool of soluble polymers in a
small cubical section $\left[  0,2\pi\right]  ^{3}$ ($\equiv\mathbb{T}^{3}$)
of a biological cell (see also \cite{Na12}).

For $E$ a real (or complex) Banach space, $1\leq p<\infty$ and  $n\in\mathbb{N}$ let $L^{p}\left(  \mathbb{R}^{n},E\right)$ and $L^{p}\left(\mathbb{T}^{n},E\right)$ be the usual Bochner space of $p$-integrable $E$-valued functions on $\mathbb{R}^{n}$ and on the $n$-dimensional torus $\mathbb{T}^{n}$ respectively.  Now, we say that a function $M:\mathbb{Z}^{n}\to\mathcal{L}(E)$, where $\mathcal{L}(E)$ is the Banach space
of bounded linear operators $T:E\to E$ endowed with the usual operator norm, is a Fourier multiplier on $L^{p}\left(\mathbb{T}^{n},E\right)$ if for each $f\in L^{p}\left(  \mathbb{T}^{n},E\right)$ there exists $g\in L^{p}\left(\mathbb{T}^{n},E\right)$ such that%
\begin{equation}
\hat{g}\left(  k\right)  =M(k)\hat{f}\left(  k\right)  \text{ for all }%
k\in\mathbb{Z}^{n}\text{,}\label{ec defi Fourier mult}%
\end{equation}
where $^{\wedge}$ denotes the Fourier transform. In the same way, we say that $M$ is a Fourier multiplier on $B_{p,q}^{s}\left(  \mathbb{T}^{n},E\right)$ if for each $f\in B_{p,q}^{s}\left(
\mathbb{T}^{n},E\right)$ there exists $g\in B_{p,q}^{s}\left(\mathbb{T}^{n},E\right)$ such that (\ref{ec defi Fourier mult}) holds.

In contrast to extensive theory on $E-$valued distributions in general and
Fourier multiplier theorems on $L^{p}\left(  \mathbb{R}^{n},E\right)  $ and
$B_{p,q}^{s}\left(  \mathbb{R}^{n},E\right)  $ (and its aplications to partial
differential equations) in particular, the contribution in literature to
$E-$valued periodic distributions is rather sparse. The classical Fourier
multiplier theorems of Marcinkiewicz and Mikhlin are extended to vector-valued
functions and operator-valued multipliers on $\mathbb{Z}^{n}$, which
satisfy certain $\mathcal{R}$-boundedness condition, in \cite{Zim89}, \cite{BK05a}, \cite{BK05b}, \cite{SW07} and \cite{Na12}, for example. More specifically, they established
Fourier multiplier theorems on $L^{p}\left(  \mathbb{T}^{n},E\right)  $ if
$1<p<\infty$, $E$ is a UMD-space and, instead uniform boundedness, a $\mathcal{R}$-boundedness condition similar to condition \eqref{generalizacion de la condicion variacional de Marcinkiewics} in this work holds. The firth results about the vector-valued
periodic Besov spaces $B_{p,q}^{s}\left(\mathbb{T}^{n},E\right)$ and
Fourier multiplier theorems on these spaces appeared in \cite{AB04} but with
$n=1$. There, in Theorem 4.2, the autors proved that each sequence
$M:\mathbb{Z}\to\mathcal{L}\left(E\right)$ satisfaying the
variational Marcinkiewicz condition is a Fourier multiplier on $B_{p,q}^{s}\left(  \mathbb{T},E\right)$ if and only if $1<p<\infty$ and $E$ is a
UMD-space. The corresponding result of this theorem for Besov spaces on the
real line has been established by Bu and Kim in \cite{BK05b}. The variational
Marcinkiewicz condition, giving in \cite{AB04} is equivalent to the bounded variation condition
\eqref{generalizacion de la condicion variacional de Marcinkiewics} in case $n=1$.

In this paper we obtain (Theorem \ref{RESULTADO-PRINCIPAL}) an analogous result to the assertion of Theorem 4.2 in \cite{AB04} for the periodic Besov space $B_{p,q}^{s}\left(\mathbb{T}^{n},E\right)$. Indeed we prove that, given $s\in\R$, $1<p<\infty$ and $1\leq q\leq\infty$, each fuction $M:\Z^n\to\mathcal{L}(E)$ which satisfies \eqref{generalizacion de la condicion variacional de Marcinkiewics} is a Fourier multiplier in $B_{p,q}^{s}\left(\mathbb{T}^{n},E\right)$ iff $E$ is a $UMD-$space. Of
course, the proof of the implication where the UMD character of $E$ is the thesis, is similar to the case $n=1$ in \cite{AB04}. The hard part of this work was to show the other
direction of the equivalence. With this, we establish results of existence and uniqueness of solution for the problems \eqref{Prob1} and \eqref{Prob2}, since we prove that the sequences
\[
M_{t,\lambda}(k):=\lambda\left(  \lambda+a(t,k)\right)^{-1},\text{   }%
k\in\mathbb{Z}^{n},
\]
are of bounded variation, where $a(t,\cdot)$ is the symbol of $A(t)$.

The plan of the paper is as follows: After some preliminary definitions and
remarks in Section 2, we develope in Section 3 some fundamental elements on
Besov spaces. In particular it is proved in Lemma \ref{lema04.1} the existence of a
resolution of the unity (very useful in the following sections) and in Theorem \ref{teorema-ind-res-en-besov} the independence of norms on the resolution of unity in the space $B_{p,q}^s(\T^n,E)$. In Section 4 we define discrete Fourier multipliers on $L^p(\T,E)$ and $B_{p,q}^s(\T^n,E)$, the UMD-spaces and  show in Corollary \ref{Teorema1-caracterizacion-UMD} an elementary result to characterize UMD-spaces.
We prove in Section 5 the main result of the present paper, the Theorem \ref{RESULTADO-PRINCIPAL}. As an application, we prove in Section 6 (Corollary \ref{Corollary-application}) the existence and uniqueness of solution for the problems \eqref{Prob1} and \eqref{Prob2} in certain periodic Besov spaces.\\

In the next three sections we explain in detail definitions and preliminary results for this work in order to do more understandable the study of the periodic Besov spaces $B_{p,q}^{s}\left(  \mathbb{T}^{n},E\right)$ and the main result.

\section{Functions and distributions on $\mathbb{T}^{n}$ and $\mathbb{Z}^{n}$}

In this section we will present some notations, function spaces  on the torus
$\mathbb{T}^{n}$ and on the lattice $\mathbb{Z}^n$, as well as  spaces of periodic  and tempered distributions. Furthermore we will
give some results, which are proven in similar way to the
one-dimensional case discussed in \cite{AB04} (see also \cite{BDHN} and \cite{Na12}).

Throughout this paper $n\in\mathbb{N}$ is fixed, $\mathbb{N}_{0}:=\mathbb{N\cup
}\left\{  0\right\}  $, $\left\{  \delta_{j}:j=1,...,n\right\}  $ is the
standard basis of $\mathbb{R}^{n}$, $\langle x\rangle:=(1+|x|)^{1/2}$ for
$x\in\mathbb{R}^{n}$, where $|x|$ is the euclidean norm of $x$, $B_{r}(a)$ and
$\bar{B}_{r}(a)$ denote the open ball and closed ball (respectively) of radius
$r>0$ centered at a point $a\in\mathbb{R}^{n}$. $E$ denotes an arbitary Banach
space with norm $\left\Vert \cdot\right\Vert _{_{{E}}}$. If $X$ and $Y$ are local convex spaces, then $\mathcal{L}(X,Y)$ denotes the space of all linear and
continuous applications from $X$ into $Y$. As usual $\mathcal{L}(X):=\mathcal{L}(X,X)$. For $\alpha,\beta\in\mathbb{Z}^{n}$ the writing
$\alpha\leq\beta$ means that $\alpha_{i}\leq\beta_{i}\ $for each
$i=1,\ldots,n$, and $[\alpha,\beta]:=\left\{  k\in\mathbb{Z}^{n}:\alpha\leq
k\leq\beta\right\}  $. In the following $\dbar x:=(2\pi)^{-n}dx$, where $dx$ is the Lebesgue measure. Furthermore $C_c^m(\R^n,E)$, for $m\in\N_0\cup\{\infty\}$, denote as usual the set of all $m$-times continuously differentible functions $\varphi:\R^n\to E$ with compact support and we write $C_c:= C_c^0$.

\begin{definition}
[The spaces $C^{\infty}(\mathbb{T}^{n},E)$]\label{def-CmTn} We denote with
$C^{m}(\mathbb{T}^{n},E)$, $m\in\N_0$, the space of all $2\pi$-periodic (in
each component), $E$-valued and $m$-times continuously differentiable
functions defined in $\mathbb{R}^{n}$. The space of test functions is the
space $C^{\infty}(\mathbb{T}^{n},E):=\bigcap\limits_{m\in\mathbb{N}_{0}}%
C^{m}(\mathbb{T}^{n},E)$.\newline The topology of $C^{\infty}(\mathbb{T}%
^{n},E)$ is induced by the contable family of seminorms {$\{q_{k}%
\,;\,k\in\mathbb{N}_{0}\}$} given by
\begin{equation}
q_{k}(\varphi):=\max_{%
\genfrac{}{}{0pt}{}{\alpha\in\mathbb{N}_{0}^{n}}{|\alpha|\leq k}%
}\sup_{x\in\lbrack0,2\pi]^{n}}\Vert\partial^{\alpha}\varphi(x)\Vert
,{\quad\varphi\in C^{\infty}(\mathbb{T}^{n},E)}. \label{eq-seminorma-q-CT}%
\end{equation}
It can be shown that $\left(  C^{\infty}(\mathbb{T}^{n},E),\{q_{k}%
\,;\,k\in\mathbb{N}_{0}\}\right)  $ is a Frechet space.
\end{definition}

\begin{definition}
[The space of periodic distributions $\mathcal{D}^{\prime}(\mathbb{T}^{n},E)$%
]\label{def-dist-Tn} The space\newline$\mathcal{D}^{\prime}(\mathbb{T}%
^{n},E):=\mathcal{L}(C^{\infty}(\mathbb{T}^{n}),E)$ is called the space of
$E$-valued periodic (or toroidal) distributions. The value of $u\in
\mathcal{D}^{\prime}(\mathbb{T}^{n},E)$ on a test function $\varphi\in
C^{\infty}(\mathbb{T}^{n})$ will be denoted by $u(\varphi)$ or $\langle
u,\varphi\rangle$.\newline The topology of $\mathcal{D}^{\prime}%
(\mathbb{T}^{n},E)$ is the weak-$^\ast$-topology, i.e. a sequence
$(u_{k})_{k\in\mathbb{N}}$ in $\mathcal{D}^{\prime}(\mathbb{T}^{n},E)$
converges to $u\in\mathcal{D}^{\prime}(\mathbb{T}^{n},E)$ iff,
\[
\langle u_{k},\varphi\rangle\underset{k\rightarrow\infty}{\longrightarrow
}\langle u,\varphi\rangle\quad\text{in}\ E,\quad\forall\,\varphi\in C^{\infty
}(\mathbb{T}^{n}).
\]
{Indeed the topology of $\mathcal{D}^{\prime}(\mathbb{T}^{n},E)$ is inducided
by the family of seminorms $\{q_{\varphi}^{\prime}\,;\,\varphi\in C^{\infty
}(\mathbb{T}^{n})\}$ where
\[
q_{\varphi}^{\prime}(u):=\Vert u(\varphi)\Vert,\quad u\in\mathcal{D}^{\prime
}(\mathbb{T}^{n},E),\quad\varphi\in C^{\infty}(\mathbb{T}^{n}).
\]
}
\end{definition}

For example, for any $\psi\in C^{\infty}(\mathbb{T}^{n},E)$, the map
\[
C^{\infty}(\mathbb{T}^{n})\ni\varphi\mapsto\int\limits_{[0,2\pi]^{n}%
}\varphi(x)\psi(x)\,\dbar x,
\]
defines a $E-$ valued periodic distribution, which we call again $\psi$.

\begin{definition}
We denote with $L^{p}(\mathbb{T}^{n},E)$, $1\leq p\leq\infty$, the space of
all strongly measurable $2\pi$-periodic (in each component) functions $f:\mathbb{R}^{n}\to E$,  
such that $\left\Vert f\right\Vert
_{{L}^{p}{(\mathbb{T}^{n},E)}}<\infty$, where
\[
\left\Vert f\right\Vert _{{L}^{p}{(\mathbb{T}^{n},E)}}:=\left(  \int
_{[0,2\pi]^{n}}\left\Vert f(x)\right\Vert _{{E}}^{p}d\hspace*{-0.08em}\bar
{}\hspace*{0.1em}x\right)  ^{1/p},\qquad1\leq p<\infty,
\]
and with the usual definition for $p=\infty$.
\end{definition}

As the continuous case it holds%
\[
L^{p}(\mathbb{T}^{n},E)\hookrightarrow\mathcal{D}^{\prime}(\mathbb{T}%
^{n},E),\qquad\forall\,1\leq p\leq\infty.
\]

\begin{definition}
The space $\mathcal{S}(\mathbb{Z}^{n},E)$ consists of all functions
$\varphi:\mathbb{Z}^{n}\longrightarrow E$ for which the following holds: For
each $M\in\mathbb{R}$ there exists a constant $C_{\varphi,M}$ such that
\begin{equation}
\left\Vert \varphi(\xi)\right\Vert _{E}\leq C_{\varphi,M}\langle\xi
\rangle^{-M},\qquad\text{for all }\xi\in\mathbb{Z}^{n}\text{.}%
\end{equation}
The elements of $\mathcal{S}(\mathbb{Z}^{n},E)$ are called $E-$valued rapidly
decreasing functions on $\mathbb{Z}^{n}$. As usual $\mathcal{S}(\mathbb{Z}%
^{n}):=\mathcal{S}(\mathbb{Z}^{n},\mathbb{C})$.
\end{definition}

The topology in $\mathcal{S}(\mathbb{Z}^{n},E)$ is given by the contable
family of seminorms $\left\{  p_{k}:k\in\mathbb{N}_{0}\right\}  $ defined by
\begin{equation}
p_{k}(\varphi):=\sup_{\xi\in\mathbb{Z}^{n}}\langle\xi\rangle^{k}\left\Vert
\varphi(\xi)\right\Vert _{E},\qquad\text{for }\varphi\in\mathcal{S}%
(\mathbb{Z}^{n},E).
\end{equation}
Then a secuence $\left(  \varphi_{l}\right)  _{l\in\mathbb{N}}$ in
$\mathcal{S}(\mathbb{Z}^{n},E)$ converges to a function $\varphi\in\mathcal{S}%
(\mathbb{Z}^{n},E)$ iff
\[
p_{k}\left(  \varphi_{l}-\varphi\right)  \underset{l\rightarrow\infty
}{\longrightarrow}0\text{ for all }k\in\mathbb{N}_{0}.
\]

The space of $E-$valued tempered distributions on $\mathbb{Z}^{n}$ will be
denoted by $\mathcal{S}^{\prime}(\mathbb{Z}^{n},E)$ and consists of all linear
and continuous mappings from $\mathcal{S}(\mathbb{Z}^{n})$ into $E$. This distributions space is also endowed with the weak-$^\ast$-topology. 

\begin{example}
\label{Exampl multip_por_distrib} Let $\phi\in C_{c}(\mathbb{R}^{n})$ and
$f\in\mathcal{S}^{\prime}(\mathbb{Z}^{n},E)$. Then the mapping $\phi
f:\mathcal{S}(\mathbb{Z}^{n})\longrightarrow E$ defining by $(\phi
f)(\varphi):=f(\phi\varphi)$ for all $\varphi\in\mathcal{S}(\mathbb{Z}^{n})$
belongs to $\mathcal{S}^{\prime}(\mathbb{Z}^{n},E)$.
\end{example}

\begin{definition}
$a)$ For a function $f\in C^{\infty}(\mathbb{T}^{n},E)$ we define
\begin{equation}
\left(  \mathcal{F}_{\mathbb{T}^{n}}f\right)  (\xi):=\int\limits_{\mathbb{T}%
^{n}}e^{-ix\cdot\xi}f(x)d\hspace*{-0.08em}\bar{}\hspace*{0.1em}x=\int
\limits_{[0,2\pi]^{n}}e^{-ix\cdot\xi}f(x)d\hspace*{-0.08em}\bar{}%
\hspace*{0.1em}x,\qquad\xi\in\mathbb{Z}^{n}\text{.}%
\end{equation}
We call $\mathcal{F}_{\mathbb{T}^{n}}f$ the toroidal or periodic Fourier
transform of $f$.

\begin{itemize}
\item[$b)$] For $g\in\mathcal{S}(\mathbb{Z}^{n},E)$ we define%
\begin{equation}
\left(  \mathcal{F}_{\mathbb{T}^{n}}^{-1}g\right)  (x):=\sum\limits_{\xi
\in\mathbb{Z}^{n}}e^{ix\cdot\xi}g(\xi),\quad x\in\mathbb{T}^{n}\text{.}%
\end{equation}
We call $\mathcal{F}_{\mathbb{T}^{n}}^{-1}g$ the inverse periodic Fourier
transform of $g$.

\item[$c)$] Let $u\in\mathcal{D}^{\prime}(\mathbb{T}^{n},E)$. The periodic
Fourier transform of $u$ is defined by
\begin{equation}
\left(  \mathcal{F}_{\mathbb{T}^{n}}u\right)  (\varphi):=u\left(
[\mathcal{F}_{\mathbb{T}^{n}}^{-1}\varphi](-\cdot)\right)  ,\qquad\varphi
\in\mathcal{S}(\mathbb{Z}^{n}).
\end{equation}

\item[$d)$] For $v\in\mathcal{S}^{\prime}(\mathbb{Z}^{n},E)$ we define the
inverse \textit{periodic Fourier transform of }$v$ by
\begin{equation}
\left(  \mathcal{F}_{\mathbb{T}^{n}}^{-1}v\right)  (\psi):=v\left(
[\mathcal{F}_{\mathbb{T}^{n}}\psi](-\cdot)\right)  ,\qquad\psi\in C^{\infty
}(\mathbb{T}^{n})\text{.}%
\end{equation}

\end{itemize}
\end{definition}

\begin{proposition}
The following mappings are linear and continuous: $a)$ $C^{\infty}%
(\mathbb{T}^{n},E)\ni f\mapsto\mathcal{F}_{\mathbb{T}^{n}}f\in\mathcal{S}%
(\mathbb{Z}^{n},E)$, $b)$ $\mathcal{S}(\mathbb{Z}^{n},E)\ni g\mapsto
\mathcal{F}_{\mathbb{T}^{n}}^{-1}g\in C^{\infty}(\mathbb{T}^{n},E)$, $c)$
$\mathcal{D}^{\prime}(\mathbb{T}^{n},E)\ni u\mapsto\mathcal{F}_{\mathbb{T}%
^{n}}u\in\mathcal{S}^{\prime}(\mathbb{Z}^{n},E)$ and $d)$ $\mathcal{S}%
^{\prime}(\mathbb{Z}^{n},E)\ni v\mapsto\mathcal{F}_{\mathbb{T}^{n}}^{-1}%
v\in\mathcal{D}^{\prime}(\mathbb{T}^{n},E).$
\end{proposition}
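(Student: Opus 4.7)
The plan is to handle the four parts in the order (a), (b), (c), (d), with (a) and (b) being the direct arguments using integration by parts / rapid decay, and (c), (d) following by a composition/duality argument from the scalar versions of (a), (b). Continuity in each case will be extracted from explicit seminorm estimates.

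For part (a), I would fix $f\in C^\infty(\T^n,E)$ and $\xi\in\Z^n$. Using the identity $(I-\Delta_x)^N e^{-ix\cdot\xi}=(1+|\xi|^2)^N e^{-ix\cdot\xi}$ together with $N$-fold integration by parts over $[0,2\pi]^n$ (the boundary terms vanish by $2\pi$-periodicity in each component), I get
\begin{equation*}
(1+|\xi|^2)^N(\FT f)(\xi)=\int_{[0,2\pi]^n}e^{-ix\cdot\xi}(I-\Delta)^N f(x)\,\dbar x,
\end{equation*}
whence $\|(\FT f)(\xi)\|_E\le C_N(1+|\xi|^2)^{-N}q_{2N}(f)$. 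Since $\langle\xi\rangle^{2k}=(1+|\xi|)^k\le (1+|\xi|^2)^k\cdot C_k'$ (up to an absolute constant), for any $k\in\N_0$ there exists $N=N(k)$ such that $p_k(\FT f)\le C\,q_{2N}(f)$. This gives both $\FT f\in\SZE$ and the continuity of $\FT:\CTE\to\SZE$.

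For part (b), given $g\in\SZE$ and any $\alpha\in\N_0^n$, the formal derivative $\sum_{\xi\in\Z^n}(i\xi)^\alpha e^{ix\cdot\xi}g(\xi)$ is dominated term-wise by $|\xi|^{|\alpha|}p_M(g)\langle\xi\rangle^{-M}$; choosing $M>2|\alpha|+2n$ yields an absolutely and uniformly convergent series on $\T^n$, so by a standard termwise-differentiation argument $\FT^{-1}g\in\CTE$ with $\partial^\alpha(\FT^{-1}g)(x)=\sum_\xi(i\xi)^\alpha e^{ix\cdot\xi}g(\xi)$. The same estimate furnishes $q_k(\FT^{-1}g)\le C_k\,p_{M(k)}(g)$, proving continuity.

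For parts (c) and (d), I exploit the fact that the defining formulas realize $\FT$ and $\FT^{-1}$ on distributions as compositions with the scalar maps of (a) and (b). Writing $R\psi:=\psi(-\cdot)$ (a continuous involution on both $\CT$ and $\SZ$), one has $\FT u=u\circ R\circ\FT^{-1}$ (restricted to scalar test functions), so by the scalar case of (b) the map $\varphi\mapsto[\FT^{-1}\varphi](-\cdot)$ is continuous $\SZ\to\CT$, and composing with $u\in\mathcal{L}(\CT,E)$ shows $\FT u\in\mathcal{L}(\SZ,E)=\SZE'$. Continuity with respect to the weak-$^\ast$ topologies is immediate: if $u_k\to u$ in $\mathcal{D}'(\T^n,E)$ then for each fixed $\varphi\in\SZ$, $\langle\FT u_k,\varphi\rangle=\langle u_k,[\FT^{-1}\varphi](-\cdot)\rangle\to\langle u,[\FT^{-1}\varphi](-\cdot)\rangle=\langle\FT u,\varphi\rangle$ in $E$. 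Part (d) is identical with the roles of (a) and (b) swapped.

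I do not anticipate a genuine obstacle; this is a bookkeeping exercise about Fourier duality between $\CTE$ and $\SZE$. The only point that demands a little care is the interplay between the author's exponent convention $\langle\xi\rangle=(1+|\xi|)^{1/2}$ and the natural weight $(1+|\xi|^2)^N$ produced by the Laplacian trick, but this amounts to a harmless polynomial comparison.
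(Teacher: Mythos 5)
Your proof is correct, and the paper itself offers no argument for this proposition (it is stated without proof, with the remark that such results are proven as in the one‑dimensional case of [AB04]); your argument — the $(I-\Delta)^N$ integration‑by‑parts trick for (a), termwise differentiation of the absolutely convergent series for (b), and duality/composition with the reflected scalar maps for (c) and (d) — is exactly the standard route the authors are implicitly invoking. The seminorm bookkeeping, including the adjustment for the paper's convention $\langle\xi\rangle=(1+|\xi|)^{1/2}$, is handled correctly.
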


\begin{definition}
We say that a function $u:\mathbb{Z}^{n}\longrightarrow E$ grows at most
polynomially at infinity if there exist constans $M\in\mathbb{R}$ and $C\geq0$
(both depending on $u$) such that
\begin{equation}
\left\Vert u(\xi)\right\Vert _{E}\leq C\langle\xi\rangle^{M},\qquad\text{for
all }\xi\in\mathbb{Z}^{n}.
\end{equation}

\end{definition}

The space of all functions $u:\mathbb{Z}^{n}\longrightarrow E$ with at most
polynomial growth at infinity will be denoted by $\mathcal{O}(\mathbb{Z}^{n},E)$.\\

Note that if $u\in\mathcal{S}(\mathbb{Z}^{n},E)$, then $u\in\mathcal{O}(\mathbb{Z}^{n},E)$. We can also identify the space 
$\mathcal{O}(\mathbb{Z}^{n},E)$ with the space of all sequences $\left(  a_{k}\right)
_{k\in\mathbb{Z}^{n}}$ in $E$, for which there are constants $C$ and $M$ such
that $\left\Vert a_{k}\right\Vert _{_{{E}}}\leq C\langle k\rangle^{M}$ for all
$k\in\mathbb{Z}^{n}$.

\begin{example}
\label{ejemplo1.12-barraza}Let $u\in\mathcal{S}^{\prime}(\mathbb{Z}^{n},E)$.
The function defined by $\bar{u}(\xi):=u(\psi_{\xi})$, $\xi\in\mathbb{Z}^{n}$
belongs to $\mathcal{O}(\mathbb{Z}^{n},E)$, where $\psi_{\xi}%
\in\mathcal{S}(\mathbb{Z}^{n})$ is defined by
\begin{equation}
\psi_{\xi}(k):=\delta_{\xi,k}:=%
\begin{cases}
1, & \text{if}\ \xi=k,\\
0, & \text{if}\ \xi\neq k.
\end{cases}
\label{eje1.7bienb}%
\end{equation}

\end{example}

\begin{proposition}
The map $\mathcal{O}(\mathbb{Z}^{n},E)\ni u\mapsto\Lambda_{u}%
\in\mathcal{S}^{\prime}(\mathbb{Z}^{n},E)$, where
\begin{equation}
\Lambda_{u}(\varphi):=\sum_{\xi\in\mathbb{Z}^{n}}\varphi(\xi
)u(\xi),\qquad\forall\varphi\in\mathcal{S}(\mathbb{Z}^{n})\text{,}%
\end{equation}
is bijective.
\end{proposition}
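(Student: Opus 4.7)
The plan is to establish well-definedness, linearity, continuity, injectivity and surjectivity of the map $u\mapsto\Lambda_u$, in that order. The central technical ingredient that powers both continuity and surjectivity is the representation of any $\varphi\in\SZ$ as the convergent series $\varphi=\sum_{\xi\in\Z^n}\varphi(\xi)\psi_\xi$ in the topology of $\SZ$, with $\psi_\xi$ as in \eqref{eje1.7bienb}.

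First I would check that $\Lambda_u$ is well defined for $u\in\mathcal{O}(\Z^n,E)$. If $\|u(\xi)\|_E\leq C\langle\xi\rangle^M$ and $\varphi\in\SZ$, pick $N>M+n$ so that $p_N(\varphi)<\infty$ forces $|\varphi(\xi)|\leq p_N(\varphi)\langle\xi\rangle^{-N}$; then $\sum_\xi\|\varphi(\xi)u(\xi)\|_E\leq Cp_N(\varphi)\sum_\xi\langle\xi\rangle^{M-N}<\infty$, so $\Lambda_u(\varphi)\in E$ is well defined, obviously linear in $\varphi$, and the previous bound $\|\Lambda_u(\varphi)\|_E\leq C'p_N(\varphi)$ gives continuity on $\SZ$, hence $\Lambda_u\in\mathcal{S}'(\Z^n,E)$. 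Linearity of $u\mapsto\Lambda_u$ is immediate.

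For injectivity, suppose $\Lambda_u=0$. Evaluating on $\psi_\xi\in\SZ$ gives $0=\Lambda_u(\psi_\xi)=\sum_k\delta_{\xi,k}u(k)=u(\xi)$ for every $\xi\in\Z^n$, so $u\equiv 0$. For surjectivity, given $v\in\mathcal{S}'(\Z^n,E)$, set $\bar v(\xi):=v(\psi_\xi)$. By Example \ref{ejemplo1.12-barraza} we already know $\bar v\in\mathcal{O}(\Z^n,E)$, so $\Lambda_{\bar v}$ makes sense. To identify $\Lambda_{\bar v}$ with $v$, fix $\varphi\in\SZ$ and consider the partial sums $\varphi_N:=\sum_{|\xi|\leq N}\varphi(\xi)\psi_\xi$. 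Then $\varphi-\varphi_N$ vanishes on $\{|\xi|\leq N\}$ and equals $\varphi(\xi)$ elsewhere, so for any $k\in\N_0$,
\[
p_k(\varphi-\varphi_N)=\sup_{|\xi|>N}\langle\xi\rangle^k\|\varphi(\xi)\|_E\leq p_{k+1}(\varphi)\sup_{|\xi|>N}\langle\xi\rangle^{-1}\xrightarrow[N\to\infty]{}0,
\]
so $\varphi_N\to\varphi$ in $\SZ$. Applying the continuous linear functional $v$ and using linearity, $v(\varphi)=\lim_N\sum_{|\xi|\leq N}\varphi(\xi)v(\psi_\xi)=\sum_\xi\varphi(\xi)\bar v(\xi)=\Lambda_{\bar v}(\varphi)$, proving $v=\Lambda_{\bar v}$.

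The only step requiring care is the convergence $\varphi_N\to\varphi$ in $\SZ$; everything else is a routine absolute convergence estimate. Once that convergence is granted, both the identification $v=\Lambda_{\bar v}$ and the recovery $u(\xi)=\Lambda_u(\psi_\xi)$ follow immediately from testing against the canonical basis $\{\psi_\xi\}$.
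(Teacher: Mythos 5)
Your proof is correct; the paper actually states this proposition without giving a proof, and your argument --- absolute convergence of $\sum_\xi\varphi(\xi)u(\xi)$ for well-definedness and continuity, testing against the basis $\{\psi_\xi\}$ for injectivity, and the convergence $\sum_{|\xi|\le N}\varphi(\xi)\psi_\xi\to\varphi$ in $\mathcal{S}(\mathbb{Z}^n)$ for surjectivity --- is precisely the one that Example \ref{ejemplo1.12-barraza} and Remark \ref{obs1.15-barraza} presuppose. Two cosmetic points only: with the paper's convention $\langle x\rangle=(1+|x|)^{1/2}$ you need $N>M+2n$ rather than $N>M+n$ for $\sum_\xi\langle\xi\rangle^{M-N}<\infty$, and since $\varphi\in\mathcal{S}(\mathbb{Z}^n)$ is scalar-valued the expression $\Vert\varphi(\xi)\Vert_E$ in your surjectivity estimate should read $|\varphi(\xi)|$.
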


\begin{remark}
\label{obs1.15-barraza} Due to the last proposition, one can identify
$u\in\mathcal{S}^{\prime}(\mathbb{Z}^{n},E)$ with $\Lambda_{\bar{u}}$, and so
\begin{equation}
u(\varphi)=\Lambda_{\bar{u}}(\varphi)=\sum_{\xi\in\mathbb{Z}^{n}}\varphi(\xi)u(\psi_{\xi
}),\qquad\forall\varphi\in\mathcal{S}(\mathbb{Z}^{n}).
\end{equation}

\end{remark}

\begin{definition}
For $\phi\in C^{\infty}(\mathbb{T}^{n}),$ $u\in\mathcal{D}^{\prime}%
(\mathbb{T}^{n})$ and $e\in E$, the tensor products $\phi\otimes e$ and
$u\otimes e$ are defined by%
\[
(\phi\otimes e)(x):=\phi(x)e,\quad x\in\lbrack0,2\pi]^{n},
\]%
\[
(u\otimes e)(\varphi):=u(\varphi)e,\quad\varphi\in C^{\infty}(\mathbb{T}%
^{n}).
\]

\end{definition}

It is straightforward to prove that $\phi\otimes e\in C^{\infty}(\mathbb{T}%
^{n},E)$ and $u\otimes e\in\mathcal{D}^{\prime}(\mathbb{T}^{n},E)$.

\begin{proposition}
\label{proposicion2,12-13-barraza} Let $(a_{k})_{k\in\mathbb{Z}^{n}}\subset E
$ be a sequence with at most polynomial growth at infinity (i.e. $[k\mapsto
a_{k}]\in\mathcal{O}(\mathbb{Z}^{n},E)$), then the mapping $g:C^{\infty
}(\mathbb{T}^{n})\to E$, defined by
\begin{equation}
g(\varphi):=\sum_{k\in\mathbb{Z}^{n}}\left(  \mathcal{F}_{\mathbb{T}^{n}%
}\varphi\right)  (k)a_{k},\qquad\varphi\in C^{\infty}(\mathbb{T}^{n})\text{,}
\label{bien}%
\end{equation}
belongs to $\mathcal{D}^{\prime}(\mathbb{T}^{n},E)$. Furthermore, for all
$g\in\mathcal{D}^{\prime}(\mathbb{T}^{n},E)$ it holds
\begin{equation}
g(\varphi)=\sum_{k\in\mathbb{Z}^{n}}\left(  \mathcal{F}_{\mathbb{T}^{n}%
}\varphi\right)  (k)g(e_{k})\qquad (\varphi\in C^{\infty}(\mathbb{T}%
^{n}))\text{,} \label{obs2.15 bienvenido0}%
\end{equation}
where $e_{k}(x):=e^{ik\cdot x}$ for all $x\in\mathbb{R}^{n}$.
\end{proposition}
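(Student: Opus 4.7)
The plan is to handle the two assertions separately, using the continuity of $\mathcal{F}_{\mathbb{T}^n}:C^\infty(\mathbb{T}^n)\to\mathcal{S}(\mathbb{Z}^n)$ established in the preceding Proposition, together with the fact that test functions agree with the sum of their Fourier series in the topology of $C^\infty(\mathbb{T}^n)$.

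For the first assertion, fix $(a_k)\in\mathcal{O}(\mathbb{Z}^n,E)$ with $\|a_k\|_E\leq C\langle k\rangle^M$. Since $\mathcal{F}_{\mathbb{T}^n}\varphi\in\mathcal{S}(\mathbb{Z}^n)$ for every $\varphi\in C^\infty(\mathbb{T}^n)$, choosing any $N>M+n$ gives $|(\mathcal{F}_{\mathbb{T}^n}\varphi)(k)|\leq p_N(\mathcal{F}_{\mathbb{T}^n}\varphi)\langle k\rangle^{-N}$, so the series in \eqref{bien} converges absolutely in $E$, showing that $g(\varphi)$ is well defined; linearity is immediate. For continuity, I would estimate
\[
\|g(\varphi)\|_E\leq p_N(\mathcal{F}_{\mathbb{T}^n}\varphi)\sum_{k\in\mathbb{Z}^n}C\langle k\rangle^{M-N}\leq C'\,p_N(\mathcal{F}_{\mathbb{T}^n}\varphi),
\]
and then invoke the continuity of $\mathcal{F}_{\mathbb{T}^n}:C^\infty(\mathbb{T}^n)\to\mathcal{S}(\mathbb{Z}^n)$ (part $a$ of the previous Proposition) to dominate $p_N(\mathcal{F}_{\mathbb{T}^n}\varphi)$ by some seminorm $q_{k_0}(\varphi)$.

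For the second assertion, the strategy is to expand an arbitrary $\varphi\in C^\infty(\mathbb{T}^n)$ in its Fourier series and apply $g$ termwise. Concretely, I would first show that for each $\varphi\in C^\infty(\mathbb{T}^n)$ the partial sums $S_L\varphi:=\sum_{|k|\leq L}(\mathcal{F}_{\mathbb{T}^n}\varphi)(k)e_k$ converge to $\varphi$ in $C^\infty(\mathbb{T}^n)$. Once this is available, the linearity and continuity of $g\in\mathcal{D}'(\mathbb{T}^n,E)=\mathcal{L}(C^\infty(\mathbb{T}^n),E)$ give
\[
g(\varphi)=\lim_{L\to\infty}g(S_L\varphi)=\lim_{L\to\infty}\sum_{|k|\leq L}(\mathcal{F}_{\mathbb{T}^n}\varphi)(k)\,g(e_k)=\sum_{k\in\mathbb{Z}^n}(\mathcal{F}_{\mathbb{T}^n}\varphi)(k)\,g(e_k),
\]
which is exactly \eqref{obs2.15 bienvenido0}. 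As a by-product, the sequence $a_k:=g(e_k)$ must lie in $\mathcal{O}(\mathbb{Z}^n,E)$, because a continuous linear map sends the bounded (in fact, equicontinuous) family $\{\langle k\rangle^{-M}e_k\}$ into a bounded set of $E$ for suitable $M$; this can be argued from the estimate $q_m(e_k)\leq C_m\langle k\rangle^m$ and the continuity of $g$.

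The main obstacle is the $C^\infty(\mathbb{T}^n)$-convergence $S_L\varphi\to\varphi$. I would prove it by applying the inverse toroidal Fourier transform: since $\mathcal{F}_{\mathbb{T}^n}\varphi\in\mathcal{S}(\mathbb{Z}^n)$, the identity
\[
\partial^\alpha\bigl(\varphi-S_L\varphi\bigr)(x)=\sum_{|k|>L}(i k)^\alpha (\mathcal{F}_{\mathbb{T}^n}\varphi)(k)\,e_k(x)
\]
is uniformly dominated by $p_{|\alpha|+n+1}(\mathcal{F}_{\mathbb{T}^n}\varphi)\sum_{|k|>L}\langle k\rangle^{-n-1}\to 0$ as $L\to\infty$, so $q_m(\varphi-S_L\varphi)\to 0$ for every $m\in\mathbb{N}_0$. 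This rapid decay of $\mathcal{F}_{\mathbb{T}^n}\varphi$, guaranteed by part $a$ of the previous Proposition, together with the standard inversion $\varphi=\mathcal{F}_{\mathbb{T}^n}^{-1}\mathcal{F}_{\mathbb{T}^n}\varphi$ (which can be justified by testing against $e_j$ and using orthogonality), is what closes the argument.
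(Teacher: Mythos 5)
Your proof is correct and takes essentially the approach the paper intends (it gives no explicit proof, deferring to the one-dimensional argument of \cite{AB04}): rapid decay of $\mathcal{F}_{\mathbb{T}^n}\varphi$ yields well-definedness and continuity of $g$, and convergence of the Fourier partial sums $S_L\varphi\to\varphi$ in $C^{\infty}(\mathbb{T}^n)$ combined with the continuity of $g$ yields \eqref{obs2.15 bienvenido0}. The only cosmetic adjustment is that with the paper's convention $\langle x\rangle=(1+|x|)^{1/2}$ your exponents should be roughly doubled (e.g.\ $N>M+2n$ instead of $N>M+n$), which affects nothing in the argument.
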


It follows from the last proposition that for all $g\in\mathcal{D}^{\prime
}(\mathbb{T}^{n},E)$,%
\begin{equation}
g=\sum_{k\in\mathbb{Z}^{n}}e_{k}\otimes\hat{g}(k)\quad\text{in}\ \mathcal{D}%
^{\prime}(\mathbb{T}^{n},E), \label{obs2.15 bienvenido}%
\end{equation}
where $\hat{g}(k):=g(e_{-k}),\ k\in\mathbb{Z}^{n},$ are call
the\textit{\ Fourier coefficients}%
\index{coeficientes de Fourier}
of $g$. To see this fact note that $e_{k}(\varphi)=\displaystyle\int\limits_{\mathbb{T}%
^{n}}e^{ix\cdot k}\varphi(x)\,\dbar x=\left(
\mathcal{F}_{\mathbb{T}^{n}}\varphi\right)  (-k)$ for all $\varphi\in
C^{\infty}(\mathbb{T}^{n})$.\\ 
\\
We finish this section with some results, which we will need for the following one. Before, note that%
\[
e_{k}(e_{-\xi})=\int\limits_{[0,2\pi]^{n}}e^{i(k-\xi)\cdot x}\,\dbar x=\delta_{\xi,k},\qquad\text{for all }%
\xi,k\in\mathbb{Z}^{n}.
\]

\begin{lemma}
\label{a1} If $(a_{k})_{k\in\mathbb{Z}^{n}}\in\mathcal{O}(\mathbb{Z}^{n},E)$, then $\sum_{k\in\mathbb{Z}^{n}}e_{k}\otimes a_{k}\in\mathcal{D}%
^{\prime}(\mathbb{T}^{n},E)$. Furthermore, if $(b_{k})_{k\in\mathbb{Z}^{n}}\in\mathcal{O}(\mathbb{Z}^{n},E)$,
\begin{equation}
\sum_{k\in\mathbb{Z}^{n}}e_{k}\otimes a_{k}=\sum_{k\in\mathbb{Z}^{n}}%
e_{k}\otimes b_{k}\ \text{in }\mathcal{D}^{\prime}(\mathbb{T}^{n}%
,E)\Longleftrightarrow a_{k}=b_{k}\text{ for each }k\in\mathbb{Z}^{n}.
\label{igu}%
\end{equation}

\end{lemma}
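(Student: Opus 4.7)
The plan is to reduce both assertions to Proposition~\ref{proposicion2,12-13-barraza} together with the orthogonality identity $e_k(e_{-j})=\delta_{k,j}$ recorded just before the lemma.

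For the first assertion, I would fix $\varphi\in C^\infty(\T^n)$ and compute directly
\[
(e_k\otimes a_k)(\varphi) \;=\; e_k(\varphi)\,a_k \;=\; (\FT\varphi)(-k)\,a_k.
\]
Since $\FT\varphi\in\SZ$ decays faster than any polynomial while $\|a_k\|_E$ grows at most polynomially, the series $\sum_{k\in\Z^n}(\FT\varphi)(-k)\,a_k$ is absolutely convergent in $E$. The reindexed sequence $(a_{-k})_{k\in\Z^n}$ still lies in $\mathcal{O}(\Z^n,E)$, so applying Proposition~\ref{proposicion2,12-13-barraza} to it shows that the map $\varphi\mapsto\sum_{k}(\FT\varphi)(k)\,a_{-k}=\sum_k(\FT\varphi)(-k)\,a_k$ defines an element $g$ of $\mathcal{D}'(\T^n,E)$. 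The partial sums $\sum_{|k|\le N}e_k\otimes a_k$ then converge to $g$ at every test function, i.e.\ in the weak-$*$ topology of $\mathcal{D}'(\T^n,E)$, which is precisely the topology carrying that space; thus $\sum_{k\in\Z^n}e_k\otimes a_k\in\mathcal{D}'(\T^n,E)$.

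For the equivalence \eqref{igu}, the implication $(\Leftarrow)$ is immediate. For $(\Rightarrow)$, the natural move is to evaluate both sides at the exponentials $e_{-j}\in C^\infty(\T^n)$, $j\in\Z^n$. By the definition of the distributional sum together with the orthogonality $e_k(e_{-j})=\delta_{k,j}$ noted just above the lemma, only the $k=j$ term survives in each series, yielding
\[
a_j \;=\; \Bigl(\sum_{k\in\Z^n}e_k\otimes a_k\Bigr)(e_{-j}) \;=\; \Bigl(\sum_{k\in\Z^n}e_k\otimes b_k\Bigr)(e_{-j}) \;=\; b_j,
\]
and since $j\in\Z^n$ is arbitrary, $a_k=b_k$ for every $k\in\Z^n$.

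I do not expect a real obstacle: the lemma is simply the distributional counterpart of the uniqueness of Fourier coefficients, and Proposition~\ref{proposicion2,12-13-barraza} already does the analytic work. The only point that requires some care is the justification of the termwise evaluation of $\sum_k e_k\otimes a_k$ at $e_{-j}$, and that is supplied by the weak-$*$ convergence of the partial sums established in the first step.
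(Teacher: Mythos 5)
Your proof is correct and follows essentially the same route as the paper: both reduce the first assertion to Proposition~\ref{proposicion2,12-13-barraza} applied to the reindexed sequence $(a_{-k})_{k\in\mathbb{Z}^n}$ via the identity $e_k(\varphi)=(\mathcal{F}_{\mathbb{T}^n}\varphi)(-k)$, and both obtain the uniqueness by evaluating at $e_{-\xi}$ and using $e_k(e_{-\xi})=\delta_{\xi,k}$. Your added remarks on absolute convergence and weak-$*$ convergence of the partial sums are harmless elaborations of the same argument.
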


\begin{proof}
Due to $e_{-k}(\varphi)=\left(  \mathcal{F}_{\mathbb{T}^{n}}\varphi\right)
(k)$ for all $\varphi\in C^{\infty}(\mathbb{T}^{n})$, it follows from
Proposition \ref{proposicion2,12-13-barraza} that the mapping
\[
\varphi\mapsto\sum_{k\in\mathbb{Z}^{n}}\left(  \mathcal{F}_{\mathbb{T}^{n}%
}\varphi\right)  (k)a_{-k}=\sum_{k\in\mathbb{Z}^{n}}e_{k}(\varphi)a_{k}%
\]
belongs to $\mathcal{D}^{\prime}(\mathbb{T}^{n},E)$, i.e. there exists some
$g\in\mathcal{D}^{\prime}(\mathbb{T}^{n},E)$ such that
\[
g=\sum_{k\in\mathbb{Z}^{n}}e_{k}\otimes a_{k}.
\]
Now, if $\sum_{k\in\mathbb{Z}^{n}}e_{k}\otimes a_{k}=\sum_{k\in\mathbb{Z}^{n}%
}e_{k}\otimes b_{k}$, then it holds for each $\xi\in\mathbb{Z}^{n}$ that
\[
a_{\xi}=\Big(  \sum_{k\in\mathbb{Z}^{n}}e_{k}\otimes a_{k}\Big)  (e_{-\xi
})=\Big(  \sum_{k\in\mathbb{Z}^{n}}e_{k}\otimes b_{k}\Big)  (e_{-\xi
})=b_{\xi}.
\]
The reciprocal is trivial.
\end{proof}

As a direct consequence of the last lemma and the equalities
(\ref{obs2.15 bienvenido0}) and (\ref{obs2.15 bienvenido}) we have that:

\begin{theorem}
\label{lema igualdad de distribuc} Let $f,g\in\mathcal{D}^{\prime}%
(\mathbb{T}^{n},E)$ and $(a_{k})_{k\in\mathbb{Z}^{n}}\in\mathcal{O}(\mathbb{Z}^{n},E)$.

\begin{itemize}
\item[a)] $f=g\Longleftrightarrow\hat{f}(k)=\hat{g}(k)$ for all
$k\in\mathbb{Z}^{n}$.

\item[b)] $f=\sum_{k\in\mathbb{Z}^{n}}e_{k}\otimes a_{k}\Longleftrightarrow
\hat{f}(k)=a_{k}$ for all $k\in\mathbb{Z}^{n}$.
\end{itemize}
\end{theorem}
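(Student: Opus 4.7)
The plan is to reduce both assertions directly to Lemma \ref{a1} by expressing the distributions $f$ and $g$ via their Fourier coefficients through the identity (\ref{obs2.15 bienvenido}), which tells us that every $h\in\mathcal{D}^{\prime}(\mathbb{T}^{n},E)$ has the representation $h=\sum_{k\in\mathbb{Z}^{n}}e_{k}\otimes\hat h(k)$. This representation is legitimate because the Fourier coefficients $\hat h(k) = h(e_{-k})$ form an element of $\mathcal{O}(\mathbb{Z}^{n},E)$ (this growth property is built into the definition of $\hat h$ on a distribution), so Lemma \ref{a1} applies.

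For part (a), I would apply the representation to both $f$ and $g$: one direction is trivial (if the distributions are equal, plugging in $\varphi=e_{-k}$ yields equal Fourier coefficients); for the converse, if $\hat f(k)=\hat g(k)$ for every $k$, then
\[
f=\sum_{k\in\mathbb{Z}^{n}}e_{k}\otimes\hat f(k)=\sum_{k\in\mathbb{Z}^{n}}e_{k}\otimes\hat g(k)=g
\]
in $\mathcal{D}^{\prime}(\mathbb{T}^{n},E)$, using (\ref{obs2.15 bienvenido}) on each side. Alternatively, invoking the uniqueness assertion \eqref{igu} of Lemma \ref{a1} applied to the two representations makes the equivalence immediate.

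For part (b), again one direction is straightforward: if $f=\sum_{k\in\mathbb{Z}^{n}}e_{k}\otimes a_{k}$, evaluate at $e_{-\xi}$ for each $\xi\in\mathbb{Z}^{n}$; using $e_{k}(e_{-\xi})=\delta_{\xi,k}$ noted just before Lemma \ref{a1}, this gives $\hat f(\xi)=a_{\xi}$. Conversely, if $\hat f(k)=a_{k}$ for every $k$, combine (\ref{obs2.15 bienvenido}) and \eqref{igu} to conclude
\[
f=\sum_{k\in\mathbb{Z}^{n}}e_{k}\otimes\hat f(k)=\sum_{k\in\mathbb{Z}^{n}}e_{k}\otimes a_{k}.
\]

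There is no real obstacle here; the work was done in Lemma \ref{a1} and in Proposition \ref{proposicion2,12-13-barraza}. The only small point to be careful about is ensuring that the sequence $(\hat f(k))_{k\in\mathbb{Z}^{n}}$ indeed belongs to $\mathcal{O}(\mathbb{Z}^{n},E)$, so that Lemma \ref{a1} is applicable to the series $\sum_{k}e_{k}\otimes\hat f(k)$; this is guaranteed by the continuity of $f$ on $C^{\infty}(\mathbb{T}^{n})$ (the seminorms $q_{k}$ of Definition \ref{def-CmTn} control $\|e_{-k}\|$ by a polynomial in $\langle k\rangle$), and it is precisely the content of Example \ref{ejemplo1.12-barraza} applied to the distribution-theoretic Fourier transform.
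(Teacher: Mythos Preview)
Your proposal is correct and matches the paper's approach exactly: the paper states the theorem as ``a direct consequence of the last lemma [Lemma~\ref{a1}] and the equalities (\ref{obs2.15 bienvenido0}) and (\ref{obs2.15 bienvenido})'' without writing out a proof, and your argument simply spells out precisely this deduction.
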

\section{The periodic Besov spaces $B_{p,q}^{s}\left(  \mathbb{T}%
^{n},E\right)  $}

A sequence $\phi:=(\phi_j)_{j\in\mathbb{N}_{0}}\subset\mathcal{S}(\mathbb{R}%
^{n})$ is called a \textit{resolution of unity}%
\index{resoluci{\'{o}}n de la unidad}%
, denoted $(\phi_j)_{j\in\mathbb{N}_{0}}\in\Phi(\mathbb{R}^{n})$, if it
satisfies the following three conditions:

\begin{enumerate}
\item $\supp(\phi_{0})\subset\Omega_{0}:=\overline{B}_{{2}}(0)$ and%
\begin{equation}
\supp(\phi_{j})\subset\Omega_{j}:=\left\{  x\in\mathbb{R}^{n}%
:2^{j-1}\leq|x|\leq2^{j+1}\right\}  ,\quad j\in\mathbb{N}.
\label{ec defi Omega j}%
\end{equation}

\item $\sum\limits_{j\geq0}\phi_{j}(\xi)=1$ for all $\xi\in\mathbb{R}^{n}$.

\item \label{resol unidad 3} For each $\alpha\in\mathbb{N}_{0}^{n}$ there
exists a constant $C_{\alpha}>0$ such that
\[
\left\vert (\partial^{\alpha}\phi_{j})(\xi)\right\vert \leq C_{\alpha
}2^{-j|\alpha|}\chi_{\Omega_{j}}(\xi),\ \text{for all }\xi\in\mathbb{R}%
^{n}\ \text{and }j\in\mathbb{N}_{0}\text{,}%
\]
where $\chi_{\Omega_{j}}$ denotes the characteristic function on $\Omega_{j}$.
\end{enumerate}

The set $\Phi(\mathbb{R}^{n})$ is not empty as it is shown in the following
generalization of Lemma 4.1 in \cite{AB04}.

\begin{lemma}
\label{lema04.1} There exists a sequence $(\phi_{j})_{j\in\mathbb{N}_{0}}%
\in\Phi(\mathbb{R}^{n})$ such that

\begin{itemize}
\item[a)] $\phi_{j}\geq0$ for all $j\in\mathbb{N}_{0}$,

\item[b)] $\supp(\phi_{j})\subsetneq\Omega_{j}$ for all $j\in\mathbb{N}_{0}$,

\item[c)] $\phi_{j}(\xi)=1$ if $|\xi|\in\lbrack7\cdot2^{j-3},3\cdot2^{j-1}]$
and $j\geq3$,

\item[d)] $|\xi|\in\lbrack7\cdot2^{j-3},3\cdot2^{j-1}]$ and $j\geq3$ implies
$\xi\notin\supp(\phi_{j-1})\cap\, \supp(\phi_{j+1})$.
\end{itemize}
\end{lemma}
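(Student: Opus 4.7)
The plan is to build $(\phi_j)_{j\in\mathbb{N}_0}$ by dyadic dilation of a single radial cutoff. First I would fix a smooth, non-negative, radially non-increasing function $\chi:\R^n\to[0,1]$ with
\[
\chi(\xi)=1 \ \text{for}\ |\xi|\le 3/2,\qquad \chi(\xi)=0 \ \text{for}\ |\xi|\ge 7/4.
\]
Such a $\chi$ is obtained, e.g., by convolving $\mathbf{1}_{\bar{B}_{13/8}(0)}$ with a rotation-invariant non-negative mollifier supported in $B_{1/8}(0)$. I would then set $\phi_0:=\chi$ and, for $j\ge 1$,
\[
\phi_j(\xi):=\chi(2^{-j}\xi)-\chi(2^{-(j-1)}\xi).
\]

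With this choice, the three resolution-of-unity conditions together with (a)--(d) reduce to dyadic bookkeeping. Radial monotonicity of $\chi$ and $|2^{-j}\xi|\le|2^{-(j-1)}\xi|$ yield (a). A direct check of where the two summands coincide shows that
\[
\supp(\phi_j)\subseteq\{(3/4)\,2^j\le|\xi|\le(7/4)\,2^j\}\subsetneq\Omega_j \quad (j\ge 1),
\]
and $\supp(\phi_0)\subseteq\bar{B}_{7/4}(0)\subsetneq\bar{B}_2(0)=\Omega_0$, giving (1) and (b). The telescoping identity $\sum_{j=0}^{J}\phi_j(\xi)=\chi(2^{-J}\xi)$, which is $1$ for each fixed $\xi$ once $J$ is large, gives (2). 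The chain rule together with boundedness of $\partial^\alpha\chi$ produces $|\partial^\alpha\phi_j(\xi)|\le C_\alpha 2^{-j|\alpha|}$ on $\supp(\phi_j)\subseteq\Omega_j$, i.e.\ (3).

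For (c), if $|\xi|\in[7\cdot 2^{j-3},3\cdot 2^{j-1}]=[(7/8)\,2^j,(3/2)\,2^j]$, then $|2^{-j}\xi|\le 3/2$ forces $\chi(2^{-j}\xi)=1$, while $|2^{-(j-1)}\xi|\ge 7/4$ forces $\chi(2^{-(j-1)}\xi)=0$, so $\phi_j(\xi)=1$. For (d), the same support bound gives $\supp(\phi_{j-1})\subseteq[(3/8)\,2^j,(7/8)\,2^j]$ and $\supp(\phi_{j+1})\subseteq[(3/2)\,2^j,(7/2)\,2^j]$; these two annuli meet the interval $[(7/8)\,2^j,(3/2)\,2^j]$ only at the distinct single radii $|\xi|=(7/8)\,2^j$ and $|\xi|=(3/2)\,2^j$, so no $\xi$ in that interval can lie in both supports at once.

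The only delicate point is calibrating the two transition radii of $\chi$ so that (c) is compatible with (b): writing $\chi=1$ for $|\xi|\le a$ and $\chi=0$ for $|\xi|\ge b$, the requirements collapse to $3/2\le a<b\le 7/4$, a tight but non-empty window. Once this calibration is made, every remaining step is routine mollification and chain-rule estimation.
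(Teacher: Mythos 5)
Your construction is correct, but it follows a genuinely different route from the paper's. You telescope a single radial cutoff, setting $\phi_0=\chi$ and $\phi_j(\xi)=\chi(2^{-j}\xi)-\chi(2^{-(j-1)}\xi)$; this is the classical Littlewood--Paley device, and it makes condition (2) free (telescoping), while (b), (c), (d) reduce to reading off where each of the two summands equals $0$ or $1$ --- your calibration $3/2\le a<b\le 7/4$ of the transition radii of $\chi$ is exactly the right constraint, and the support inclusion $\supp(\phi_j)\subseteq\{(3/4)2^j\le|\xi|\le(7/4)2^j\}$ together with the chain rule gives (1) and (3) with constants independent of $j$. The paper instead starts from an annular bump $\varphi$ with $\varphi=1$ on $\{13/8\le|\xi|\le 13/4\}$, sets $\varphi_j(\xi)=\varphi(\xi/2^{j-1})$, proves that $\Psi:=\sum_k\varphi_k\ge 1$ everywhere (locally a sum of at most three consecutive terms), and normalizes $\phi_j:=\varphi_j/\Psi$. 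The normalization route makes positivity (a) automatic and does not require the bumps to telescope exactly, at the price of verifying $\Psi\ge 1$ and that $\Psi=\varphi_j$ on the annuli where (c) is asserted; your route gets (c) and (d) by inspection (indeed for all $j\ge 1$, which is stronger than required) but needs the small extra observation for (a). For that point you invoke radial monotonicity of $\chi$, which does hold for a mollified ball indicator provided the mollifier is radially non-increasing; note that monotonicity is not actually needed, since $\chi(2^{-(j-1)}\xi)>0$ already forces $|2^{-j}\xi|<7/8<3/2$ and hence $\chi(2^{-j}\xi)=1\ge\chi(2^{-(j-1)}\xi)$. Both arguments are complete and standard; yours is arguably the more economical of the two.
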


\begin{proof}
Let $\varphi_{0}\in\mathcal{S}(\mathbb{R}^{n})$ with $0\leq\varphi_{0}\leq1$,
$\supp(\varphi_{0})\subsetneq\overline{B}_{{2}}(0)$ and $\varphi_{0}(\xi)=1$, if
$|\xi|\leq\frac{13}{8}$. Let $\varphi$ be another function in $\mathcal{S}%
(\mathbb{R}^{n})$ such that $0\leq\varphi\leq1$, $\supp(\varphi)\subsetneq
\left\{  x\in\mathbb{R}^{n}:\frac{3}{2}\leq|x|\leq\frac{7}{2}\right\}  $ and
$\varphi(\xi)=1$, if $\frac{13}{8}\leq|\xi|\leq\frac{13}{4}$. Now, for
$j\in\mathbb{N}$ and $\xi\in\mathbb{R}^{n}$, define
\[
\varphi_{j}(\xi):=\varphi\left(  \frac{\xi}{2^{j-1}}\right)  \qquad
\text{and}\qquad\Psi(\xi):=\sum_{k=0}^{\infty}\varphi_{k}(\xi)\text{.}%
\]
Then it is clear that for $j=1,2,\dots$ it holds
\begin{align}
\supp(\varphi_{j})\subset K_{j} &  :=\left\{  \xi\in\mathbb{R}^{n}%
:3\cdot2^{j-2}\leq|\xi|\leq7\cdot2^{j-2}\right\}  ,\\
\varphi_{j}(\xi) &  =1,\ \text{if }|\xi|\in\lbrack13\!\cdot\!2^{j-4}%
,13\!\cdot\!2^{j-3}].\label{mono4.15}%
\end{align}
Due to $K_{j}\cap K_{j+2}=\emptyset$ and $\varphi_{j}(13\cdot2^{j-3}%
)=1=\varphi_{j+1}(13\cdot2^{j-3})$ for each $j\in\mathbb{N}_{0}$, we have that
\begin{equation}
\supp\left(  \varphi_{j}\right)  \cap\supp\left(  \varphi
_{j+1}\right)  \neq\emptyset\quad\text{and}\quad\supp\left(  \varphi
_{j}\right)  \cap\supp\left(  \varphi_{j+2}\right)  =\emptyset,\label{K}%
\end{equation}
for all $j\in\mathbb{N}_{0}$.\\

\textit{Assertion}: For each $\xi\in\mathbb{R}^{n}$, there exists some
$j\in\mathbb{N}_{0}$ such that%
\begin{equation}
\Psi(\xi)=\sum_{\ell=-1}^{1}\varphi_{j+\ell}(\xi)\geq1\text{,}\label{serie-finita}%
\end{equation}
where $\varphi_{-1}:=0$. In fact: Let $\xi\in\mathbb{R}^{n}$. Because
$\mathbb{R}_{0}^{+}=[0,13/8]\cup\bigcup_{j=1}^{\infty}[13\cdot2^{j-4}%
,13\cdot2^{j-3}]$, then $|\xi|\leq13/8$ {or} $13\cdot2^{j-4}\leq|\xi
|\leq13\cdot2^{j-3}$ for some $j\in\mathbb{N}$. From this, (\ref{mono4.15})
and (\ref{K}) it follows clearly (\ref{serie-finita}).\newline Now, define for
$j=0,1,2,\dots$
\begin{equation}
\phi_{j}(\xi):=\frac{\varphi_{j}(\xi)}{\Psi(\xi)},\qquad\text{for all }\xi
\in\mathbb{R}^{n}\text{.}\label{def resolution of unity in lemma}%
\end{equation}
By direct calculation we get that $(\phi)_{j\in\mathbb{N}_{0}}\in
\Phi(\mathbb{R}^{n})$ and it satisfies $a)-d)$.
\end{proof}

In the continuous case (in $\mathbb{R}^{n}$) it is well known that if
$(\phi_{j})\in\Phi(\mathbb{R}^{n})$, then there exists a constant $C_{n}>0$
such that
\begin{equation}
\left\Vert \mathcal{F}_{\mathbb{R}^{n}}^{-1}\phi_{j}\right\Vert _{L^{1}%
(\mathbb{R}^{n})}\leq C_{n},\quad\text{for all }j\in\mathbb{N}_{0},\label{ec1}%
\end{equation}
where $\mathcal{F}_{\mathbb{R}^{n}}^{-1}\phi_{j}$ is the inverse Fourier
transform (in $\mathbb{R}^{n}$) of $\phi_{j}$.

\begin{theorem}
\label{obs-teor-just} Let $\phi\in C_{c}(\mathbb{R}^{n})$ and $f\in
\mathcal{D}^{\prime}(\mathbb{T}^{n},E)$. Then
\begin{equation}
\mathcal{F}_{\mathbb{T}^{n}}^{-1}(\phi\mathcal{F}_{\mathbb{T}^{n}}%
f)=\sum_{k\in\mathbb{Z}^{n}}e_{k}\otimes\phi(k)\hat{f}%
(k).\label{mi resultado 1}%
\end{equation}

\end{theorem}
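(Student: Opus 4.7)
The plan is to apply Theorem \ref{lema igualdad de distribuc} b), which reduces the claimed distributional identity to the computation of Fourier coefficients. First I would check that the right-hand side is a legitimate element of $\mathcal{D}'(\T^n,E)$: since $\phi\in C_c(\R^n)$ is bounded and the sequence $(\hat{f}(k))_{k\in\Z^n}$ of Fourier coefficients of $f\in\mathcal{D}'(\T^n,E)$ has at most polynomial growth, the product sequence $(\phi(k)\hat{f}(k))_{k\in\Z^n}$ belongs to $\mathcal{O}(\Z^n,E)$, and Lemma \ref{a1} then guarantees that $\sum_{k\in\Z^n}e_k\otimes\phi(k)\hat{f}(k)$ converges in $\mathcal{D}'(\T^n,E)$.

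Set $g:=\mathcal{F}_{\T^n}^{-1}(\phi\,\mathcal{F}_{\T^n}f)\in\mathcal{D}'(\T^n,E)$. By Theorem \ref{lema igualdad de distribuc} b) it then suffices to show that $\hat{g}(k)=\phi(k)\hat{f}(k)$ for every $k\in\Z^n$. Unwinding successively the definition of $\hat{g}(k)=g(e_{-k})$, the definition of the inverse periodic Fourier transform on $\mathcal{S}'(\Z^n,E)$, and the multiplication from Example \ref{Exampl multip_por_distrib}, I obtain
\[
\hat{g}(k) \;=\; (\phi\,\mathcal{F}_{\T^n}f)\bigl((\mathcal{F}_{\T^n}e_{-k})(-\cdot)\bigr) \;=\; (\mathcal{F}_{\T^n}f)\bigl(\phi\cdot(\mathcal{F}_{\T^n}e_{-k})(-\cdot)\bigr).
\]

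The pivotal step is the computation $(\mathcal{F}_{\T^n}e_{-k})(\xi)=\int_{[0,2\pi]^n}e^{-ix\cdot(\xi+k)}\,\dbar x=\delta_{\xi,-k}$, which after reflection gives $(\mathcal{F}_{\T^n}e_{-k})(-\xi)=\delta_{\xi,k}=\psi_k(\xi)$ in the notation of Example \ref{ejemplo1.12-barraza}. Since $\psi_k$ is supported at the single point $k$, one has the pointwise identity $\phi\cdot\psi_k=\phi(k)\psi_k$ in $\mathcal{S}(\Z^n)$. Plugging this in and using the definition of $\mathcal{F}_{\T^n}$ on $\mathcal{D}'(\T^n,E)$ together with the elementary fact $(\mathcal{F}_{\T^n}^{-1}\psi_k)(x)=e^{ik\cdot x}=e_k(x)$ from the series formula, I arrive at $(\mathcal{F}_{\T^n}f)(\psi_k)=f(e_{-k})=\hat{f}(k)$, and therefore $\hat{g}(k)=\phi(k)\hat{f}(k)$, as required.

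The main obstacle, though essentially bookkeeping, is to track carefully the reflections $v\mapsto v(-\cdot)$ inserted by parts $c)$ and $d)$ of the Fourier transform definition, and to verify that $\psi_{-k}(-\cdot)=\psi_k$; this sign symmetry is what lets the evaluation of $\mathcal{F}_{\T^n}f$ against $\psi_k$ collapse to the Fourier coefficient $\hat{f}(k)=f(e_{-k})$.
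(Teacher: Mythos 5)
Your proof is correct and follows essentially the same route as the paper: both reduce the identity to the Fourier-coefficient computation $\bigl[\mathcal{F}_{\mathbb{T}^{n}}^{-1}(\phi\mathcal{F}_{\mathbb{T}^{n}}f)\bigr]^{\widehat{\ }}(k)=\phi(k)\hat{f}(k)$ using $(\mathcal{F}_{\mathbb{T}^{n}}e_{-k})(-\cdot)=\psi_{k}$ and then invoke the expansion $g=\sum_{k}e_{k}\otimes\hat{g}(k)$. The only cosmetic difference is that you pass through $(\phi u)(\varphi)=u(\phi\varphi)$ and the pointwise identity $\phi\psi_{k}=\phi(k)\psi_{k}$, while the paper expands $\phi\mathcal{F}_{\mathbb{T}^{n}}f$ as a sum over $\xi$ via Remark \ref{obs1.15-barraza}; the computation is the same.
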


\begin{proof}
From Example \ref{Exampl multip_por_distrib} and Remark \ref{obs1.15-barraza}
it follows for $\phi\in C_{c}(\mathbb{R}^{n})$ and $f\in\mathcal{D}^{\prime
}(\mathbb{T}^{n},E)$ that
\begin{align*}
\big[\mathcal{F}_{\mathbb{T}^{n}}^{-1}(\phi\mathcal{F}_{\mathbb{T}^{n}%
}f)\big]\,^{\widehat{}}\,(k) &  =\big(\mathcal{F}_{\mathbb{T}^{n}}^{-1}%
(\phi\mathcal{F}_{\mathbb{T}^{n}}f)\big)(e_{-k})=(\phi\mathcal{F}%
_{\mathbb{T}^{n}}f)((\mathcal{F}_{\mathbb{T}^{n}}e_{-k})(-\cdot))\\
&  =\sum_{\xi\in\mathbb{Z}^{n}}\phi(\xi)(\mathcal{F}_{\mathbb{T}^{n}}%
f)(\psi_{\xi})(\mathcal{F}_{\mathbb{T}^{n}}e_{-k})(-\xi)\\
&  =\sum_{\xi\in\mathbb{Z}^{n}}\phi(\xi)(\mathcal{F}_{\mathbb{T}^{n}}%
f)(\psi_{\xi})\delta_{\xi,k}=\phi(k)(\mathcal{F}_{\mathbb{T}^{n}}f)(\psi
_{k})\\
&  =\phi(k)f\big([\mathcal{F}_{\mathbb{T}^{n}}^{-1}\psi_{k}](-\cdot
)\big)=\phi(k)f(e_{-k})=\phi(k)\hat{f}(k).
\end{align*}
That is,
\begin{equation}
\big[\mathcal{F}_{\mathbb{T}^{n}}^{-1}(\phi\mathcal{F}_{\mathbb{T}^{n}%
}f)\big]\,^{\widehat{}}\,(k)=\phi(k)\hat{f}(k)\quad\text{for all }%
k\in\mathbb{Z}^{n}\text{,}\label{mi resultado 01}%
\end{equation}
and therefore (\ref{mi resultado 1}) holds, due to (\ref{obs2.15 bienvenido}).
\end{proof}

In similar way to the proof of Proposition 2.2 in \cite{AB04} one obtaints the
following result.

\begin{proposition}
\label{teor4.13} For each $\phi\in C_{c}^{\infty}(\mathbb{R}^{n})$ and $1\leq
p\leq\infty$, it holds
\begin{equation}
\bigg\Vert \sum_{k\in\mathbb{Z}^{n}}e_{k}\otimes\phi(k)\hat{f}(k)\bigg\Vert
_{{L}^{p}{(\mathbb{T}^{n},E)}}\leq\left\Vert \mathcal{F}_{\mathbb{R}^{n}}%
^{-1}\phi\right\Vert _{{L}^{1}{(\mathbb{R}^{n})}}\,\left\Vert f\right\Vert
_{{L}^{p}{(\mathbb{T}^{n},E)}}, \label{ec5}%
\end{equation}
for all $f\in C^{\infty}(\mathbb{T}^{n},E)$.
\end{proposition}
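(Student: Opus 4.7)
The plan is to realise the series $g(x):=\sum_{k\in\Z^n}\phi(k)\hat f(k)e^{ik\cdot x}$ as the $\R^n$-convolution of $f$ (periodically extended) with $h:=\mathcal{F}_{\R^n}^{-1}\phi\in\mathcal{S}(\R^n)$, and then to estimate this convolution by a Young/Minkowski argument. Since $f\in C^\infty(\T^n,E)$ its Fourier coefficients $\hat f(k)$ lie in $\mathcal{S}(\Z^n,E)$, and $\phi|_{\Z^n}$ is finitely supported because $\phi\in C_c^\infty(\R^n)$; hence the defining series for $g$ is a finite sum of smooth functions. Substituting $\hat f(k)=\int_{[0,2\pi]^n}f(y)e^{-ik\cdot y}\,\dbar y$ and exchanging sum and integral yields
\[
g(x)=\int_{[0,2\pi]^n}K(x-y)f(y)\,\dbar y,\qquad K(z):=\sum_{k\in\Z^n}\phi(k)e^{ik\cdot z}.
\]

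The next step is a Poisson-summation identification of $K$. Since $h\in\mathcal{S}(\R^n)$, the periodization $H(z):=\sum_{j\in\Z^n}h(z+2\pi j)$ belongs to $C^\infty(\T^n)$, and a direct computation of its Fourier coefficients gives $\hat H(k)=(2\pi)^{-n}(\mathcal{F}_{\R^n}h)(k)=(2\pi)^{-n}\phi(k)$, so that $K=(2\pi)^n H$. Inserting this into the representation of $g$ and unfolding the sum of integrals over $[0,2\pi]^n+2\pi j$ using the $2\pi$-periodicity of $f$ produces
\[
g(x)=\int_{\R^n}h(x-y)f(y)\,dy.
\]

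The desired bound now follows from the $E$-valued Minkowski integral inequality. For $1\leq p<\infty$, since $\|f(\cdot-z)\|_{L^p(\T^n,E)}=\|f\|_{L^p(\T^n,E)}$ for every $z\in\R^n$ by periodicity,
\[
\|g\|_{L^p(\T^n,E)}\leq\int_{\R^n}|h(z)|\,\|f(\,\cdot\,-z)\|_{L^p(\T^n,E)}\,dz=\|h\|_{L^1(\R^n)}\|f\|_{L^p(\T^n,E)},
\]
and the case $p=\infty$ is settled by the pointwise version of the same estimate. The main obstacle is the Poisson-summation step: one must justify the interchange of sum and integral in the definition of $K$, verify the identity $K=(2\pi)^n H$, and carefully unfold the periodized integral into an integral over $\R^n$. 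Everything that comes after is a routine application of Minkowski's inequality and translation-invariance of the periodic $L^p$-norm.
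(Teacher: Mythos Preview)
Your proof is correct and follows essentially the same route as the one the paper invokes (Proposition~2.2 in \cite{AB04}): write the multiplier operator as a periodic convolution, identify the kernel via Poisson summation with the periodization of $h=\mathcal{F}_{\R^n}^{-1}\phi$, unfold to an $\R^n$-convolution, and finish with Minkowski's inequality. One minor remark: the sum defining $K$ is finite because $\phi\in C_c^\infty(\R^n)$, so no interchange is needed there; the only convergence to justify is the periodization $H=\sum_j h(\cdot+2\pi j)$ and the unfolding, both of which follow immediately from $h\in\mathcal{S}(\R^n)$.
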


\begin{definition}
A function $f:\mathbb{R}^{n}\longrightarrow E$ is called an $E-$
\textit{trigonom{e}tric polynomial}, if there exist $\alpha,\beta\in
\mathbb{Z}^{n}$ with $\alpha\leq\beta$ and $(x_{k})_{k\in[\alpha,\beta]}\subset E$ such that%
\begin{equation}
f=\sum_{k\in\lbrack\alpha,\beta]}e_{k}\otimes x_{k}\text{.}
\label{ec defin trigon poly}%
\end{equation}
where $e_{k}(x):=e^{ik\cdot x}$ for all $x\in\mathbb{R}^{n}$.
\end{definition}

We can write the $E-$ trigonometric polynomial $f$ in
(\ref{ec defin trigon poly}) as%
\[
f=\sum_{k\in\mathbb{Z}^{n}}e_{k}\otimes x_{k},
\]
with $x_{k}:=0$ for $k\notin\lbrack\alpha,\beta]$, or in the form
\[
f=\sum_{k\in\left[  -N,N\right]  ^{n}}e_{k}\otimes x_{k},
\]
for some $N\in\mathbb{N}$.

We denote the class of all $E-$valued trigonometric polynomials on
$\mathbb{T}^{n}$ by $\mathcal{T}(\mathbb{T}^{n},E)$. It is clear that
$\mathcal{T}(\mathbb{T}^{n},E)\subset C^{\infty}(\mathbb{T}^{n},E)$.

\begin{remark}
\label{obs2} Due to Theorem \ref{lema igualdad de distribuc} $b)$ if $f=\sum_{k\in\lbrack\alpha,\beta]}e_{k}\otimes x_{k}%
\in\mathcal{T}(\mathbb{T}^{n},X)$, then 
\[
\hat{f}(k)=\left\{
\begin{array}
[c]{ll}%
x_{k}, & \text{if }k\in\lbrack\alpha,\beta],\\
0, & \text{otherwise,}%
\end{array}
\right.
\]
when $f$ is seen as a distribution in
$\mathcal{D}^{\prime}(\mathbb{T}^{n},E)$.
\end{remark}

\begin{definition}
Let $1\leq p,q\leq\infty$, $s\in\mathbb{R}$ and $\phi:=(\phi_{j}%
)_{j\in\mathbb{N}_{0}}\in\Phi(\mathbb{R}^{n})$. We define the $E-$valued and
$n-$dimensional periodic \textit{Besov space by}%
\[
B_{p,q}^{s,\phi}(\mathbb{T}^{n},E):=\left\{  f\in\mathcal{D}^{\prime
}(\mathbb{T}^{n},E)\,:\left\Vert f\right\Vert _{B_{p,q}^{s,\phi}}:=\left\Vert f\right\Vert _{B_{p,q}^{s,\phi}%
(\mathbb{T}^{n},E)}<\infty\right\}  ,
\]
where
\begin{equation}
\left\Vert f\right\Vert _{B_{p,q}^{s,\phi}}:=%
\begin{cases}
\bigg(  \sum\limits_{j\geq0}2^{sjq}\Big\Vert \sum\limits_{k\in\mathbb{Z}^{n}%
}e_{k}\otimes\phi_{j}(k)\hat{f}(k)\Big\Vert _{{L}^{p}{(\mathbb{T}^{n},E)}%
}^{q}\bigg)  ^{1/q}, & \text{if }1\leq q<\infty,\\
& \\
\sup\limits_{j\in\mathbb{N}_{0}}2^{sj}\Big\Vert \sum\limits_{k\in
\mathbb{Z}^{n}}e_{k}\otimes\phi_{j}(k)\hat{f}(k)\Big\Vert _{{L}%
^{p}{(\mathbb{T}^{n},E)}}, & \text{if }q=\infty,
\end{cases}
\label{norma-besov}%
\end{equation}

\end{definition}

Note that for $f\in\mathcal{D}^{\prime}(\mathbb{T}^{n},E)$, $(\phi_{j}%
)_{j\in\mathbb{N}_{0}}\in\Phi(\mathbb{R}^{n})$ and $j\geq0$, $\mathcal{F}%
_{\mathbb{T}^{n}}^{-1}(\phi_{j}\mathcal{F}_{\mathbb{T}^{n}}f)\overset
{\text{(\ref{mi resultado 1})}}{=}\sum_{k\in\mathbb{Z}^{n}}e_{k}\otimes
\phi_{j}(k)\hat{f}(k)$ is a trigonometric polynomial. Furthermore
$B_{p,q}^{s,\phi}(\mathbb{T}^{n},E)$ is a Banach space with the norm defined
in (\ref{norma-besov}).

In the following theorem we prove that the $E-$valued and $n-$dimensional
periodic \textit{Besov spaces are independent on }$\phi\in\Phi(\mathbb{R}%
^{n})$.

\begin{theorem}
\label{teorema-ind-res-en-besov} Let $\phi=(\phi_{j})_{j\in\mathbb{N}_{0}}$,
$\varphi=(\varphi_{j})_{j\in\mathbb{N}_{0}}$ in $\Phi(\mathbb{R}^{n})$. Then
the norms $\left\Vert \cdot\right\Vert _{B_{p,q}^{s,\phi}(\mathbb{T}^{n},E)}$
and $\left\Vert \cdot\right\Vert _{B_{p,q}^{s,\varphi}(\mathbb{T}^{n},E)}$ are equivalent.
\end{theorem}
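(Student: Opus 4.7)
The plan is to establish a two-sided inequality $\|f\|_{B^{s,\phi}_{p,q}} \lesssim \|f\|_{B^{s,\varphi}_{p,q}}$ and, by symmetry, the reverse, by exploiting the finite-overlap property of the two dyadic resolutions of unity and the uniform $L^1$-bound \eqref{ec1} on $\mathcal{F}^{-1}_{\R^n}\phi_j$. Since $\sum_{i\ge 0}\varphi_i(\xi)=1$ for every $\xi\in\R^n$, I would write
\[
\phi_j(\xi)=\sum_{i\ge 0}\phi_j(\xi)\varphi_i(\xi).
\]
A direct comparison of the dyadic shells $\Omega_j$ shows $\supp(\phi_j)\cap\supp(\varphi_i)=\emptyset$ whenever $|i-j|\ge 3$; consequently, for each $j\in\N_0$ the above decomposition has at most five nonzero terms, those indexed by $i\in I(j):=\{i\in\N_0:|i-j|\le 2\}$.

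Substituting this decomposition into the Littlewood--Paley piece gives
\[
\sum_{k\in\Z^n}e_k\otimes\phi_j(k)\hat f(k)\;=\;\sum_{i\in I(j)}\sum_{k\in\Z^n}e_k\otimes\phi_j(k)\varphi_i(k)\hat f(k).
\]
Now I would introduce the trigonometric polynomial $g_i:=\sum_{k}e_k\otimes\varphi_i(k)\hat f(k)\in C^\infty(\T^n,E)$, which by Theorem \ref{lema igualdad de distribuc}(b) satisfies $\hat{g_i}(k)=\varphi_i(k)\hat f(k)$. Applying Proposition \ref{teor4.13} to $g_i$ with the multiplier $\phi_j\in C_c^\infty(\R^n)$ (a Schwartz function with compact support) and invoking the uniform bound \eqref{ec1} yields
\[
\Big\|\sum_{k}e_k\otimes\phi_j(k)\varphi_i(k)\hat f(k)\Big\|_{L^p(\T^n,E)}\le\|\mathcal{F}_{\R^n}^{-1}\phi_j\|_{L^1(\R^n)}\|g_i\|_{L^p(\T^n,E)}\le C_n\|g_i\|_{L^p(\T^n,E)},
\]
so that
\[
\Big\|\sum_k e_k\otimes\phi_j(k)\hat f(k)\Big\|_{L^p(\T^n,E)}\le C_n\sum_{i\in I(j)}\|g_i\|_{L^p(\T^n,E)}.
\]

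To conclude for $1\le q<\infty$, I would raise to the $q$-th power, multiply by $2^{sjq}$, sum over $j$, and use the power-mean inequality $(\sum_{i\in I(j)}a_i)^q\le 5^{q-1}\sum_{i\in I(j)}a_i^q$. After swapping the order of summation (noting $i\in I(j)$ iff $|j-i|\le 2$) and using $2^{sjq}\le 2^{2|s|q}\cdot 2^{siq}$ for such pairs, one obtains
\[
\|f\|_{B^{s,\phi}_{p,q}}^q\le 5^q C_n^q\, 2^{2|s|q}\,\|f\|_{B^{s,\varphi}_{p,q}}^q.
\]
The case $q=\infty$ is handled in the same spirit with suprema in place of sums. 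Swapping the roles of $\phi$ and $\varphi$ yields the reverse inequality, proving equivalence.

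The proof is mostly bookkeeping; the one place requiring care is correctly identifying the finite overlap set $I(j)$ (in particular the boundary case $j=0$, where $\Omega_0=\overline{B}_2(0)$ differs in shape from the shells $\Omega_j$ with $j\ge 1$) and tracking the constants through the swap of summations so that the resulting bound is independent of $f$ and $j$.
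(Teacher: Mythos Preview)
Your argument is correct and follows essentially the same route as the paper: write $\phi_j=\phi_j\sum_i\varphi_i$, exploit the finite overlap of the dyadic shells, apply Proposition~\ref{teor4.13} together with the uniform $L^1$-bound~\eqref{ec1}, and then sum. The only cosmetic difference is that the paper first fixes $\phi$ to be the specific resolution of Lemma~\ref{lema04.1} (via a transitivity reduction), which sharpens the overlap to three neighbours $|l|\le 1$ instead of your five $|i-j|\le 2$; this yields slightly smaller constants but is otherwise the same proof.
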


\begin{proof}
We must prove that there are constants $c,C>0$ such that
\begin{equation}
c\left\Vert f\right\Vert _{B_{p,q}^{s,\varphi}(\mathbb{T}^{n},E)}%
\leq\left\Vert f\right\Vert _{B_{p,q}^{s,\phi}(\mathbb{T}^{n},E)}\leq
C\left\Vert f\right\Vert _{B_{p,q}^{s,\varphi}(\mathbb{T}^{n},E)}%
,\label{ec5plus}%
\end{equation}
for all $f\in B_{p,q}^{s,\varphi}(\mathbb{T}^{n},X)$. But, due to the
transitivity of $\leq$, it suffices to show (\ref{ec5plus}) for $\phi$ as in
Lemma \ref{lema04.1}. We will show this for $1\leq p<\infty$, the case
$p=\infty$ is proved in similar way. Let $1\leq p<\infty$, $\varphi\in
\Phi(\mathbb{R}^{n})$ and $\phi$ as in Lemma \ref{lema04.1}. Since $\supp
(\phi_{j})\cap\supp(\phi_{j+2})=\emptyset$ for all $j\in\mathbb{N}_{0}$,
\begin{equation}
\phi_{j}(x)=\phi_{j}(x)\sum_{l=-1}^{1}\varphi_{j+l}(x),\qquad\forall
x\in\mathbb{R}^{n}\text{ and }j\in\mathbb{N}_{0}\text{,}\label{ec6}%
\end{equation}
where $\phi_{-1}=\varphi_{-1}:=0$. From $(|a|+|b|)^{p}\leq c_{p}%
(|a|^{p}+|b|^{p})$ for all $a,b\in\mathbb{C}$, Proposition \ref{teor4.13} and
(\ref{ec1}) it follows that
\begin{align}
\left\Vert f\right\Vert _{B_{p,q}^{s,\phi}}^{q} &
\overset{\text{(\ref{ec6})}}{=}\sum_{j\geq0}2^{jsq}\Big\Vert \sum
_{k\in\mathbb{Z}^{n}}e_{k}\otimes\phi_{j}(k)\sum_{l=-1}^{1}\varphi
_{j+l}(k)\hat{f}(k)\Big\Vert _{{L}^{p}{(\mathbb{T}^{n},E)}}^{q}\nonumber\\
&  \leq c_{q}\sum_{l=-1}^{1}\sum_{j\geq0}2^{jsq}\Big\Vert \sum_{k\in
\mathbb{Z}^{n}}e_{k}\otimes\phi_{j}(k)\varphi_{j+l}(k)\hat{f}(k)\Big\Vert
_{{L}^{p}{(\mathbb{T}^{n},E)}}^{q}\nonumber\\
&  \!\!\overset{\text{(\ref{mi resultado 01})}}{=}\!\!c_{q}\sum_{l=-1}^{1}\sum
_{j\geq0}2^{jsq}\Big\Vert \sum_{k\in\mathbb{Z}^{n}}e_{k}\otimes\phi
_{j}(k)\left(  \mathcal{F}_{{\mathbb{T}^{n}}}^{-1}\left(  \varphi
_{j+l}\mathcal{F}_{{\mathbb{T}^{n}}}f\right)  \right)  ^{\wedge}\left(
k\right)  \Big\Vert _{{L}^{p}{(\mathbb{T}^{n},E)}}^{q}\nonumber\\
&  \leq c_{q}\sum_{l=-1}^{1}\sum_{j\geq0}2^{jsq}\left\Vert \mathcal{F}%
_{{\mathbb{R}^{n}}}^{-1}\phi_{j}\right\Vert _{L^{1}({\mathbb{R}}^{n})}%
^{q}\left\Vert \mathcal{F}_{{\mathbb{T}^{n}}}^{-1}\left(  \varphi
_{j+l}\mathcal{F}_{{\mathbb{T}^{n}}}f\right)  \right\Vert _{{L}^{p}%
{(\mathbb{T}^{n},E)}}^{q}\nonumber\\
&  \leq c_{q}c_{n}\sum_{l=-1}^{1}\sum_{j\geq0}2^{jsq}\left\Vert \mathcal{F}%
_{{\mathbb{T}^{n}}}^{-1}\left(  \varphi_{j+l}\mathcal{F}_{{\mathbb{T}^{n}}%
}f\right)  \right\Vert _{{L}^{p}{(\mathbb{T}^{n},E)}}^{q}\nonumber\\
&  =c_{q}c_{n}\sum_{l=-1}^{1}\sum_{j\geq0}2^{jsq}\Big\Vert \sum
_{k\in\mathbb{Z}^{n}}e_{k}\otimes\varphi_{j+l}(k)\hat{f}(k)\Big\Vert
_{{L}^{p}{(\mathbb{T}^{n},E)}}^{q}\text{.}\label{ec antes de 7}%
\end{align}
Now, because%
\begin{multline}
\sum_{j\geq0}2^{jsq}\Big\Vert \sum_{k\in\mathbb{Z}^{n}}e_{k}\otimes
\varphi_{j\pm1}(k)\hat{f}(k)\Big\Vert _{{L}^{p}{(\mathbb{T}^{n},E)}}^{q}\\%
\leq2^{\mp jsq}\sum_{j\geq0}2^{jsq}\Big\Vert \sum_{k\in\mathbb{Z}^{n}}%
e_{k}\otimes\varphi_{j}(k)\hat{f}(k)\Big\Vert _{{L}^{p}{(\mathbb{T}^{n},E)}%
}^{q}\text{,}%
\end{multline}
then
\begin{equation}
\left\Vert f\right\Vert _{B_{p,q}^{s,\phi}({\mathbb{T}}^{n},E)}^{q}\leq
c_{q}c_{n}(2\pi)^{-n}(1+2^{sq}+2^{-sq})\left\Vert f\right\Vert _{B_{p,q}%
^{s,\varphi}({\mathbb{T}}^{n},E)}^{q}.\label{ec7}%
\end{equation}
Exchanging the roles of $\phi$ and $\varphi$ in the
expressions (\ref{ec6}) - (\ref{ec7}), we have that (\ref{ec5plus}) follows from (\ref{ec7})
with $\phi$ as in Lemma \ref{lema04.1}.
\end{proof}

Due to the last theorem we will write $B_{p,q}^{s}({\mathbb{T}}^{n},E)$
instead $B_{p,q}^{s,\phi}({\mathbb{T}}^{n},E)$. From now on, $B_{p,q}%
^{s}({\mathbb{T}}^{n},E)$ will be considered with the resolution of the unity of
Lemma \ref{lema04.1}.

\begin{remark}
Let $s\in\mathbb{R}$, $1\leq p,q\leq\infty$ and $x\in E$ fixed. Note that the
function $f:{\mathbb{T}}^{n}\rightarrow E$, defined by $f:=e_{0}\otimes x$,
with $e_{0}(y)=1$ for all $y\in\mathbb{R}^{n}$, satisfies (due to Remark
\ref{obs2})%
\begin{align}
\sum\limits_{j\geq0}2^{sjq}\Big\Vert \sum\limits_{k\in\mathbb{Z}^{n}}%
e_{k}\otimes\phi_{j}(k)\hat{f}(k)\Big\Vert _{{L}^{p}{(\mathbb{T}^{n},E)}%
}^{q}  &  =\sum\limits_{j\geq0}2^{sjq}\left\vert \phi_{j}(0)\right\vert
^{q}\left\Vert x\right\Vert _{E}^{q}\nonumber\\
&  =\left\vert \phi_{0}(0)\right\vert ^{q}\left\Vert x\right\Vert _{E}%
^{q}=:C_{0}^{q}\left\Vert x\right\Vert _{E}^{q}\text{,}
\label{ec Norm besov polino trig}%
\end{align}
if $q<\infty$. Similar result holds for $q=\infty$. Because of $(\phi
_{j})_{j\in\mathbb{N}_{0}}\in\Phi(\mathbb{R}^{n})$, one obtains with this idea
that%
\[
\mathcal{T}(\mathbb{T}^{n},E)\subset B_{p,q}^{s}({\mathbb{T}}^{n},E)\text{.}%
\]

\end{remark}
\section{Discrete Fourier multipliers}

\begin{definition}
\label{def-mult-fourier-discreto-sobre-besov} A function $M:\mathbb{Z}%
^{n}\to\mathcal{L}(E,F)$ is called a \textit{discrete 
operator-valued} ($B_{p,q}^{s}-$)\textit{ Fourier multiplier} from
$B_{p,q}^{s}(\mathbb{T}^{n},X)$ to $B_{p,q}^{s}(\mathbb{T}^{n},Y)$ if for each
$f\in B_{p,q}^{s}(\mathbb{T}^{n},E)$ there exists $g\in B_{p,q}^{s}%
(\mathbb{T}^{n},F)$ such that $\hat{g}(k)=M(k)\hat{f}(k)$ for all
$k\in\mathbb{Z}^{n}$. If $E=F$, we will say that $M$ is a discrete Fourier
multiplier on $B_{p,q}^{s}(\mathbb{T}^{n},E)$.
\end{definition}

\begin{theorem}
\label{teor criterio multiplicador}Let $M:\mathbb{Z}^{n}\longrightarrow
\mathcal{L}(E,F)$ be a function. Then the following assertion are equivalents:

\begin{itemize}
\item[a)] $M$ is a discrete $B_{p,q}^{s}-$Fourier multiplier.

\item[b)] There exists a constant $C>0$ such that%
\begin{equation}
\Big\Vert \sum_{k\in\mathbb{Z}^{n}}e_{k}\otimes M(k)\hat{f}(k)\Big\Vert
_{B_{p,q}^{s}(\mathbb{T}^{n},F)}\leq C\left\Vert f\right\Vert _{B_{p,q}^{s}(\mathbb{T}^{n},E)},
\label{criterio multiplicador}%
\end{equation}
for all $f\in B_{p,q}^{s}(\mathbb{T}^{n},E)$.
\end{itemize}
\end{theorem}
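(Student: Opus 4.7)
The plan is to establish the equivalence by a closed graph argument for the nontrivial implication $a)\Rightarrow b)$. The reverse implication is almost immediate: given $f\in B_{p,q}^s(\T^n,E)$, hypothesis (b) asserts that $g:=\sum_{k\in\Z^n}e_k\otimes M(k)\hat f(k)$ defines an element of $B_{p,q}^s(\T^n,F)\subset\mathcal{D}'(\T^n,F)$, and Theorem \ref{lema igualdad de distribuc} b) then yields $\hat g(k)=M(k)\hat f(k)$ for all $k\in\Z^n$, so $M$ is a discrete $B_{p,q}^s$-Fourier multiplier.

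For $a)\Rightarrow b)$ I would define $T: B_{p,q}^s(\T^n,E)\to B_{p,q}^s(\T^n,F)$ by $Tf=g$, where $g$ is the element supplied by Definition \ref{def-mult-fourier-discreto-sobre-besov}; uniqueness of $g$ follows from Theorem \ref{lema igualdad de distribuc} a), which makes $T$ well defined and clearly linear. The closed graph theorem (both spaces being Banach) will then deliver \eqref{criterio multiplicador}, provided I first establish the following auxiliary continuity: \emph{for each fixed $\xi\in\Z^n$, the evaluation $f\mapsto\hat f(\xi)$ is continuous from $B_{p,q}^s(\T^n,E)$ into $E$.}

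To prove this auxiliary claim, I would start from $\sum_{j\geq 0}\phi_j(\xi)=1$, whence $\hat f(\xi)=\sum_{j\geq 0}\phi_j(\xi)\hat f(\xi)$, a finite sum because only finitely many $\phi_j$ are nonzero at $\xi$ (by the support structure in Lemma \ref{lema04.1}). Each trigonometric polynomial $P_j:=\sum_{k\in\Z^n}e_k\otimes\phi_j(k)\hat f(k)$ has $\widehat{P_j}(\xi)=\phi_j(\xi)\hat f(\xi)$ by Remark \ref{obs2}, and Hölder's inequality on the normalized torus together with the definition of the Besov norm gives
\[
\|\phi_j(\xi)\hat f(\xi)\|_E\leq\|P_j\|_{L^1(\T^n,E)}\leq\|P_j\|_{L^p(\T^n,E)}\leq 2^{-sj}\|f\|_{B_{p,q}^s(\T^n,E)}.
\]
Summing the finitely many nonzero $j$ yields $\|\hat f(\xi)\|_E\leq C_\xi\|f\|_{B_{p,q}^s(\T^n,E)}$. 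With this in hand, if $f_n\to f$ in $B_{p,q}^s(\T^n,E)$ and $Tf_n\to h$ in $B_{p,q}^s(\T^n,F)$, then for every $\xi$, $M(\xi)\hat f_n(\xi)=\widehat{Tf_n}(\xi)\to\hat h(\xi)$ on the one hand, and by continuity of $M(\xi)\in\mathcal{L}(E,F)$ also $M(\xi)\hat f_n(\xi)\to M(\xi)\hat f(\xi)$ on the other, forcing $\hat h(\xi)=M(\xi)\hat f(\xi)=\widehat{Tf}(\xi)$. Theorem \ref{lema igualdad de distribuc} a) then gives $h=Tf$, so the graph of $T$ is closed.

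The step I expect to be the main obstacle is the auxiliary continuity claim, because the identity $\hat f(\xi)=\sum_j\phi_j(\xi)\hat f(\xi)$ is circular on its face; the device that breaks the circularity is recognising each $\phi_j(\xi)\hat f(\xi)$ as the $\xi$-th Fourier coefficient of the particular trigonometric polynomial $P_j$, whose $L^p(\T^n,E)$-norm is dominated term-by-term by the $j$-th dyadic block of $\|f\|_{B_{p,q}^s(\T^n,E)}$.
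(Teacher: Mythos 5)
Your proposal is correct and follows essentially the same route as the paper: $b)\Rightarrow a)$ is the direct construction of $g=\sum_{k}e_{k}\otimes M(k)\hat{f}(k)$ followed by Theorem \ref{lema igualdad de distribuc} (the paper additionally checks that $(M(k)\hat{f}(k))_{k}$ grows at most polynomially --- using that \eqref{criterio multiplicador} applied to $e_{0}\otimes x$ forces $M$ to be uniformly bounded --- so that this sum is a well-defined distribution, a point you leave implicit), and $a)\Rightarrow b)$ is the closed graph theorem. The only other difference is presentational: you isolate the continuity of $f\mapsto\hat{f}(\xi)$ on $B_{p,q}^{s}(\mathbb{T}^{n},E)$ as the explicit estimate $\Vert\hat{f}(\xi)\Vert_{E}\leq C_{\xi}\Vert f\Vert_{B_{p,q}^{s}}$ via $\Vert\widehat{P_{j}}(\xi)\Vert_{E}\leq\Vert P_{j}\Vert_{L^{p}(\mathbb{T}^{n},E)}$, whereas the paper obtains the same pointwise convergence $\phi_{j}(\xi)\hat{f}_{m}(\xi)\to\phi_{j}(\xi)\hat{f}(\xi)$ from the $L^{p}$-convergence of the dyadic blocks and the embedding $L^{p}(\mathbb{T}^{n},E)\hookrightarrow\mathcal{D}^{\prime}(\mathbb{T}^{n},E)$.
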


\begin{proof}
$a)\Rightarrow b)]$ Let $M:\mathbb{Z}^{n}\to\mathcal{L}(E,F)$
be a discrete $B_{p,q}^{s}-$Fourier multiplier. For $f=\sum_{k\in
\mathbb{Z}^{n}}e_{k}\otimes\hat{f}(k)\in B_{p,q}^{s}(\mathbb{T}^{n},E)$,
define
\begin{equation}
S_{M}(f):=\sum_{k\in\mathbb{Z}}e_{k}\otimes M(k)\hat{f}(k).
\end{equation}
Due to the hypothesis there exists a $g\in B_{p,q}^{s}(\mathbb{T}^{n},F)$ such
that $\hat{g}(k)=M(k)\hat{f}(k)$ far all $k\in\mathbb{Z}^{n}$. Therefore, due to (\ref{obs2.15 bienvenido}) we have%
\[
S_{M}(f)=\sum_{k\in\mathbb{Z}}e_{k}\otimes\hat{g}(k)=g,
\]
i.e. $S_{M}$ is a well defined application
from $B_{p,q}^{s}(\mathbb{T}^{n},X)$ into $B_{p,q}^{s}(\mathbb{T}^{n},F)$.
Now, we will prove that $S_{M}$ is a closed linear operator. Let
$(f_{m})_{m\in\mathbb{N}_{0}}=\left(  \sum_{k\in\mathbb{Z}^{n}}e_{k}%
\otimes\hat{f}_{m}(k)\right)  _{m\in\mathbb{N}_{0}}\subset B_{p,q}%
^{s}(\mathbb{T}^{n},E)$ such that
\[
f_{m}\xrightarrow[m\to\infty]{}f\qquad\text{ and }\qquad
S_{{M}}f_{m}\xrightarrow[m\to\infty]{}h
\]
in $B_{p,q}^{s}(\mathbb{T}^{n},E)$ and $B_{p,q}^{s}(\mathbb{T}^{n},F)$,
respectively. Since
\[
2^{sjq}\Big\Vert \sum_{k\in\mathbb{Z}^{n}}e_{k}\otimes\phi_{j}(k)(f_{m}%
-f)\,\widehat{}\,(k)\Big\Vert _{L^{p}\left(  \mathbb{T}^{n},E\right)  }%
\leq\left\Vert f_{m}-f\right\Vert _{{B_{p,q}^{s}(\mathbb{T}^{n},E)}}%
\xrightarrow[m\to\infty]{}0
\]
in $ \mathbb{C}$, then
\[
\sum_{k\in\mathbb{Z}^{n}}e_{k}\otimes\phi_{j}(k)(f_{m}-f)\,\widehat
{}\,(k)\xrightarrow[m\to\infty]{}0\quad\text{ in }%
L^{p}\left(  \mathbb{T}^{n},E\right)  ,
\]
for each $j\in\mathbb{N}_{0}$. Because of $L^{p}\left(  \mathbb{T}%
^{n},E\right)  \hookrightarrow\mathcal{D}^{\prime}\left(  \mathbb{T}%
^{n},E\right)  $, it holds for each $l\in\mathbb{Z}^{n}$ that%
\[
\phi_{j}(l)(f_{m}-f)\,\widehat{}\,(l)=\sum_{k\in\mathbb{Z}^{n}}e_{k}%
(e_{-l})\phi_{j}(k)(f_{m}-f)\,\widehat{}\,(k)\xrightarrow[m\to\infty
]{}0\quad\text{ in }E.
\]
Then
\begin{equation}
\phi_{j}(l)\hat{f}_{m}(l)\xrightarrow[m\to\infty]{}%
\phi_{j}(l)\hat{f}(l)\quad\text{ in }E,\ \forall\,l\in\mathbb{Z}^{n}\text{ and
}j\in\mathbb{N}_{0}\text{.} \label{ec1 conver una sucesion en besov}%
\end{equation}
In the same way one obtains that
\begin{equation}
\phi_{j}(l)M(l)\hat{f}_{m}(l)\xrightarrow[m\to\infty]{}\hat{h}(l)\quad\text{in }F,\ \forall\,l\in\mathbb{Z}^{n}\text{ and }%
j\in\mathbb{N}_{0}. \label{ec2 conver una sucesion en besov}%
\end{equation}
Because $M(k)\in\mathcal{L}(E,F)$, it follows from
(\ref{ec1 conver una sucesion en besov}) that for each $k\in\mathbb{Z}^{n}$
\[
\phi_{j}(k)M(k)\hat{f}_{m}(k)\xrightarrow[m\to\infty]{}\phi_{j}(k)M(k)\hat{f}(k)\quad\text{in }F.
\]
Therefore
\[
\sum_{k\in\mathbb{Z}^{n}}e_{k}\otimes\phi_{j}(k)M(k)\hat{f}_{m}(k)\xrightarrow
[m\to\infty]{F} \sum_{k\in\mathbb{Z}^{n}%
}e_{k}\otimes\phi_{j}(k)M(k)\hat{f}(k),
\]
because these sums are finite. In the same way it follows from
(\ref{ec2 conver una sucesion en besov}) that
\[
\sum_{k\in\mathbb{Z}^{n}}e_{k}\otimes\phi_{j}(k)M(k)\hat{f}_{m}(k)\xrightarrow
[m\to\infty]{F}\sum_{k\in\mathbb{Z}^{n}%
}e_{k}\otimes\phi_{j}(k)\hat{h}(k).
\]
Then
\[
\sum_{k\in\mathbb{Z}^{n}}e_{k}\otimes\phi_{j}(k)M(k)\hat{f}(k)=\sum
_{k\in\mathbb{Z}^{n}}e_{k}\otimes\phi_{j}(k)\hat{h}(k),\text{ \ \ for }%
j\in\mathbb{N}_{0}%
\]
and thus
\begin{align*}
&\left\Vert S_{{M}}f- h  \right\Vert _{{B_{p,q}^{s}(\mathbb{T}^{n},F)}}^{q}  \\
&=\sum_{j\geq0}2^{sjq}\Big\Vert \sum_{k\in\mathbb{Z}^{n}}e_{k}\otimes\phi
_{j}(k)\left(  S_{{M}}f-h\right)  ^{\wedge}(k)\Big\Vert _{{L^{p}%
(\mathbb{T}^{n};F)}}^{q}\\
&  =\sum_{j\geq0}2^{sjq}\Big\Vert \sum_{k\in\mathbb{Z}^{n}}e_{k}\otimes
\phi_{j}(k)\widehat{S_{{M}}f}(k)-\sum_{k\in\mathbb{Z}^{n}}e_{k}\otimes\phi
_{j}(k)\hat{h}(k)\Big\Vert _{{L^{p}(\mathbb{T}^{n};F)}}^{q}\\
&  =0,
\end{align*}
i.e. $S_{{M}}f=h$, and hence $S_{{M}}$ is a closed linear operator. Thus, by
the closed graph theorem, $S_{{M}}$ is bounded and consequently
(\ref{criterio multiplicador}) holds.\medskip

$b)\Rightarrow a)]$ Suppose that (\ref{criterio multiplicador}) holds for
each $f\in B_{p,q}^{s}(\mathbb{T}^{n},E)$. From this and
(\ref{ec Norm besov polino trig}) there exists a constant $c>0$ such that
\[
\left\Vert M(k)\right\Vert _{\mathcal{L}(E,F)}\leq c\text{ \ \ \ for all }%
k\in\mathbb{Z}^{n}.
\]
Let $f\in B_{p,q}^{s}(\mathbb{T}^{n},E)$. Because $f\in\mathcal{D}^{\prime
}(\mathbb{T}^{n},E)$, there exist constants $d>0$ and $N\in\mathbb{N}$ such
that
\begin{align*}
\big\Vert M(k)\hat{f}(k)\big\Vert _{{F}}  &  \leq cd\,q_{N}(e_{-k}%
)=cd\max_{\underset{|\alpha|\leq N}{\alpha\in\mathbb{N}_{0}^{n}}}\sup
_{x\in\lbrack0,2\pi]^{n}}|(-i)^{|\alpha|}k^{\alpha}e^{-ikx}|\\
&  \leq cd|k|^{N}\leq C\langle k\rangle^{N}\qquad\forall k\in\mathbb{Z}^{n}.
\end{align*}
Therefore $(M(k)\hat{f}(k))_{k\in\mathbb{Z}^{n}}\in\mathcal{O}(\mathbb{Z}^{n},F)$. 
Thus $g:=\sum_{k\in\mathbb{Z}^{n}}e_{k}\otimes
M(k)\hat{f}(k)\in\mathcal{D}^{\prime}(\mathbb{T}^{n},F)$, due to Proposition
\ref{proposicion2,12-13-barraza}, and thereby $\hat{g}(k)=M(k)\hat{f}(k)$, due
to Theorem \ref{lema igualdad de distribuc}. Furthermore, $g\in B_{p,q}%
^{s}(\mathbb{T}^{n},F)$ because of (\ref{criterio multiplicador}).
Consequently $M:\mathbb{Z}^{n}\to\mathcal{L}(E,F)$ is a
$B_{p,q}^{s}-$Fourier multiplier.
\end{proof}

\begin{remark}\ 
\begin{itemize}
\item[i)] In the proof of Theorem \ref{teor criterio multiplicador} it was shown
that $M:\mathbb{Z}^{n}\to\mathcal{L}(E)$ is a
uniformly bounded function, if $M$ is a discrete $B_{p,q}^{s}-$Fourier multiplier.

\item[ii)] If $M:\mathbb{Z}^{n}\to\mathcal{L}(E)$ is a uniformly
bounded function, then the operator $S_{M}:\mathcal{D}^{\prime}(\mathbb{T}%
^{n},E)\longrightarrow\mathcal{D}^{\prime}(\mathbb{T}^{n},E)$ defined by
\begin{equation}
S_{M}f:=\sum_{k\in\mathbb{Z}^{n}}e_{k}\otimes M(k)\hat{f}(k)
\label{ec9 defin Sm}%
\end{equation}
is well defined and
\[
(S_{M}f)\,\widehat{}\,(k)=M(k)\hat{f}(k)\text{ \ \ for all }k\in\mathbb{Z}%
^{n}\text{,}%
\]
as shown in the proof of $b)\Rightarrow a)$ in the previous theorem.
\end{itemize}
\end{remark}
A definition of $L^p$-Fourier multiplier, equivalent to the definition given in the introduction, is the following (see \cite{Na12}, Lemma 3.10):
\begin{definition}
Let $1\leq p<\infty$. A uniformly bounded function $M:\mathbb{Z}^{n}\to\mathcal{L}(E)$ 
is called a discrete $L^{p}-$ Fourier
multiplier, if there exists a constant $C>0$ such that
\begin{equation}
\left\Vert S_{M}f\right\Vert _{L^{p}\left(  \mathbb{T}^{n},E\right)  }\leq
C\left\Vert f\right\Vert _{L^{p}\left(  \mathbb{T}^{n},E\right)  }%
,\qquad\forall\,f\in\mathcal{T}(\mathbb{T}^{n},E),
\label{ec defin discreteLp multiplier}%
\end{equation}
where $S_{M}$ is defined by (\ref{ec9 defin Sm}). In this case $S_{M}%
\in\mathcal{L}(L^{p}\left(  \mathbb{T}^{n},E\right)  )$, due to the density of
$\mathcal{T}(\mathbb{T}^{n},X)$ in $L^{p}\left(  \mathbb{T}^{n},E\right)  $
(see \cite{Na12}, Proposition 2.4.). If $M$ a discrete $L^{p}-$ Fourier multiplier,
we will write $M\in\tilde{\mathcal{M}}_{p}(E)$ and $\left\Vert M\right\Vert
_{p}:=\left\Vert M\right\Vert _{\tilde{\mathcal{M}}_{p}}$ denotes the smallest
constant $C$ such that (\ref{ec defin discreteLp multiplier}) holds.
\end{definition}

Theorem 0.1 in \cite{Zim89} motivates the following definition of $UMD-$spaces.

\begin{definition}
$E$ is called a UMD$-$space, if the map $R:\mathbb{Z}^{n}\to
\mathcal{L}(E)$ defined by
\begin{equation}
R(k):=\left\{
\begin{array}
[c]{ll}%
I_{E}, & \text{if }k\geq0\text{,}\\
0, & \text{otherwise,}%
\end{array}
\right.  \label{ec proyeccion de riesz}%
\end{equation}
is a discrete operator-valued $L^{p}-$Fourier multiplier for some (or
equivalently, for all) $p\in(1,\infty)$, where $I_{E}$ is the identity
operator in $E$. We call $S_{R}$ the operator-valued $n-$dimensional
Riesz proyection.
\end{definition}

\begin{remark}
\label{obs3} It is easy to prove that $R$ is a discrete $L^{p}-$Fourier
multiplier if and only if, the map $N:\mathbb{Z}^{n}\to\mathcal{L}(E)$ defined by
\begin{equation}
N(k):=\left\{
\begin{array}
[c]{ll}%
I_{E}, & \text{if }k\leq0\text{,}\\
0, & \text{otherwise,}%
\end{array}
\right.  \label{operator-N}%
\end{equation}
is also a discrete $L^{p}$-Fourier multiplier.
\end{remark}

\begin{theorem}
\label{propiedades-multi-Fourier-discreto} Let $M,\ M_{l}\in\tilde
{\mathcal{M}}_{p}(E),\ l=1,2$, then:

\begin{itemize}
\item[a)] $M_{1}+M_{2}\in\tilde{\mathcal{M}}_{p}(E)$ with $S_{M_{1}+M_{2}%
}=S_{M_{1}}+S_{M_{2}}$.

\item[b)] $M_{1}\cdot M_{2}\in\tilde{\mathcal{M}}_{p}(E)$ with
$S_{M_{1}\cdot M_{2}}=S_{M_{1}}\circ S_{M_{2}}$, where $M_{1}\cdot M_{2}:\Z^k\to\mathcal{L}(E)$ is given by $(M_{1}\cdot M_{2})(k):=M_1(k)\circ M_2(k)$ for $k\in\Z^n$.

\item[c)] For each $\alpha\in\mathbb{Z}^{n}$ fixed, the application
$M_{\alpha}:\mathbb{Z}^{n}\to\mathcal{L}(E)$ defined by
\begin{equation}
M_{\alpha}(k):=M(k-\alpha)\quad\text{for all }k\in\mathbb{Z}^{n}, \label{ec10}%
\end{equation}
is a discrete $L_{p}-$Fourier multiplier with $\left\Vert M_{\alpha
}\right\Vert _{p}=\left\Vert M\right\Vert _{p}$.
\end{itemize}
\end{theorem}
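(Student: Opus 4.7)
My overall strategy is to verify each part by working on the dense subspace $\mathcal{T}(\T^n, E) \subset L^p(\T^n, E)$, where every $S_M f$ reduces to a finite sum, so all manipulations (rearrangement, reindexing, taking Fourier coefficients term-by-term) are automatically legitimate. Uniform boundedness of the composed, summed, or shifted symbol is immediate in each case, so the only real task is to prove the $L^p$-estimate on trigonometric polynomials.

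For (a), I would observe that on $f \in \mathcal{T}(\T^n, E)$ the identity $S_{M_1+M_2} f = S_{M_1} f + S_{M_2} f$ is immediate from $(M_1+M_2)(k) = M_1(k) + M_2(k)$ and the finiteness of the sum. The triangle inequality in $L^p(\T^n, E)$ then gives $\Vert S_{M_1+M_2} f \Vert_{L^p} \leq (\Vert M_1 \Vert_p + \Vert M_2 \Vert_p) \Vert f \Vert_{L^p}$, whence $M_1 + M_2 \in \tilde{\mathcal{M}}_p(E)$.

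For (b), the key observation is that if $f \in \mathcal{T}(\T^n, E)$ then $S_{M_2} f$ is again a trigonometric polynomial, so by Theorem \ref{lema igualdad de distribuc}(b) its Fourier coefficients are exactly $\widehat{S_{M_2} f}(k) = M_2(k) \hat f(k)$. Feeding this into $S_{M_1}$ gives
\[
S_{M_1}\bigl(S_{M_2} f\bigr) = \sum_{k \in \Z^n} e_k \otimes M_1(k) M_2(k) \hat f(k) = S_{M_1 \cdot M_2} f,
\]
and composing bounded operators yields $\Vert S_{M_1 \cdot M_2} f \Vert_{L^p} \leq \Vert M_1 \Vert_p \Vert M_2 \Vert_p \Vert f \Vert_{L^p}$.

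For (c), the main step is a modulation trick. Given $f \in \mathcal{T}(\T^n, E)$, set $g := e_{-\alpha} \cdot f \in \mathcal{T}(\T^n, E)$; since $\lvert e_{-\alpha} \rvert \equiv 1$ pointwise, $\Vert g \Vert_{L^p} = \Vert f \Vert_{L^p}$. A change of variable in the defining integral yields $\hat g(k) = \hat f(k + \alpha)$. Then, substituting $j = k - \alpha$ and using $e_{j+\alpha} = e_\alpha \cdot e_j$,
\[
S_{M_\alpha} f = \sum_{k \in \Z^n} e_k \otimes M(k-\alpha) \hat f(k) = e_\alpha \cdot \sum_{j \in \Z^n} e_j \otimes M(j) \hat g(j) = e_\alpha \cdot (S_M g).
\]
Taking $L^p$-norms and using $\lvert e_\alpha \rvert \equiv 1$ gives $\Vert S_{M_\alpha} f \Vert_{L^p} \leq \Vert M \Vert_p \Vert f \Vert_{L^p}$, hence $\Vert M_\alpha \Vert_p \leq \Vert M \Vert_p$. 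The reverse inequality follows by applying the same argument to $M_\alpha$ with shift $-\alpha$, since $(M_\alpha)_{-\alpha} = M$.

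None of these steps poses a genuine difficulty; the only subtle point is in (b), where reading off Fourier coefficients of $S_{M_2} f$ via Theorem \ref{lema igualdad de distribuc}(b) is legitimate precisely because $S_{M_2} f$ remains a trigonometric polynomial. This would be awkward for a general $L^p$-function but is trivial on $\mathcal{T}(\T^n, E)$, which is exactly why I plan the entire proof on this dense set.
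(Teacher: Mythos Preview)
Your proof is correct and follows essentially the same approach as the paper: for (a) and (b) the paper simply states they follow directly from the definition, and for (c) the paper performs the same index shift $\xi = k-\alpha$ inside the $L^p$-integral, using $|e^{i\alpha\cdot x}|=1$ to absorb the extra exponential---which is exactly your modulation argument $S_{M_\alpha}f = e_\alpha\cdot(S_M g)$ with $g=e_{-\alpha}f$, written out pointwise.
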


\begin{proof}
The proof of $a)$ and $b)$ follow directly from the definition. For the proof
of $c)$ let $\alpha\in\mathbb{Z}^{n}$ fixed and $f=\sum_{k\in\mathbb{Z}^{n}%
}e_{k}\otimes x_{k}$ in $\mathcal{T}(\mathbb{T}^{n},E)$. Then
\begin{align*}
& \left\Vert   S_{M_{\alpha}}f\right\Vert _{L^{p}\left(  \mathbb{T}^{n},E\right)
}^{p} \\
  &  =\int_{\mathbb{T}^{n}}\Big\Vert \sum_{k\in\mathbb{Z}^{n}%
}e^{ik\cdot x}M(k-\alpha)x_{k}\Big\Vert _{{E}}^{p}\,\dbar x
 =\int_{\mathbb{T}^{n}}\Big\Vert \sum_{\xi\in\mathbb{Z}^{n}}e^{i(\xi
+\alpha)\cdot x}M(\xi)x_{\xi+\alpha}\Big\Vert _{{E}}^{p}\,\dbar x\\
&  \overset{y_{\xi}:=x_{\xi+\alpha}}{=}\int_{\mathbb{T}^{n}}\Big\Vert
\sum_{\xi\in\mathbb{Z}^{n}}e^{i\xi\cdot x}M(\xi)y_{\xi}\Big\Vert _{E}%
^{p}\,\dbar x
= \Big\Vert S_M\big(\sum\limits_{\xi\in\Z^n}e_\xi\otimes y_\xi\big)\Big\Vert_{L^p(\T^n,E)}^p \\
& \leq\left\Vert M\right\Vert _{p}^{p}\Big\Vert \sum_{\xi\in\mathbb{Z}^{n}%
}e_\xi\otimes y_{\xi}\Big\Vert _{L^{p}\left(  \mathbb{T}^{n},E\right)
}^{p} =\left\Vert M\right\Vert _{p}^{p}\Big\Vert \sum_{\xi+\alpha\in
\mathbb{Z}^{n}}e_{\xi+\alpha}\otimes x_{\xi+\alpha}\Big\Vert_{L^{p}\left(  \mathbb{T}^{n},E\right)  }^{p} \\
& =\left\Vert M\right\Vert _{p}%
^{p}\Big\Vert \sum_{k\in\mathbb{Z}^{n}}e_{k}\otimes x_{k}\Big\Vert_{L^{p}\left(  \mathbb{T}^{n},E\right)  }^{p}.
\end{align*}
From this follows that $M_{\alpha}$ $\in\tilde{\mathcal{M}}_{p}(E)$ with
$\left\Vert M_{\alpha}\right\Vert _{p}\leq\left\Vert M\right\Vert _{p}$. In
the same way one proves that $\left\Vert M\right\Vert _{p}\leq\left\Vert
M_{\alpha}\right\Vert _{p}$.
\end{proof}
\begin{corollary}
\label{Teorema1-caracterizacion-UMD} $E$ is a UMD-space if
and only if for each $p\in(1,\infty)$ there exists a constant $C_{p}>0$ such
that for 
$$f=\sum\limits_{k\in\left[  -K,K\right]^{n}}e_{k}\otimes x_{k}\  \in\  \mathcal{T}(\mathbb{T}^{n},E) \quad (K\in\N_0)$$ 
there exists some $\beta\in\mathbb{Z}^{n}$ which satisfies $\beta_{j}\geq K$ (for all $j=1,..,n$) and
\begin{equation}
\Big\Vert \sum_{k\in\lbrack0,\beta]}e_{k}\otimes x_{k}\Big\Vert
_{L^{p}\left(  \mathbb{T}^{n},E\right)  }\,\leq C_{p}\,\Vert f\Vert _{L^{p}\left(
\mathbb{T}^{n},E\right)  }\text{.} \label{acot de la proy de riesz}%
\end{equation}

\end{corollary}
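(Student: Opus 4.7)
The plan is to reduce both directions of the equivalence to a single geometric observation about the support of a trigonometric polynomial. Namely, suppose $f=\sum_{k\in[-K,K]^n}e_k\otimes x_k\in\mathcal{T}(\mathbb{T}^n,E)$ and $\beta\in\mathbb{Z}^n$ satisfies $\beta_j\geq K$ for every $j=1,\ldots,n$. Then $[0,\beta]\cap[-K,K]^n=[0,K]^n$, and since $x_k=0$ for $k\notin[-K,K]^n$, we obtain
\[
\sum_{k\in[0,\beta]}e_k\otimes x_k=\sum_{k\in[0,K]^n}e_k\otimes x_k=S_R f,
\]
where $R$ is the Riesz projection symbol from \eqref{ec proyeccion de riesz} and $S_R$ is defined by \eqref{ec9 defin Sm}. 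This identity is the only thing one really needs.

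For the forward implication, I would assume $E$ is UMD. By definition, $R$ is a discrete $L^p$-Fourier multiplier for every $p\in(1,\infty)$, so $S_R\in\mathcal{L}(L^p(\mathbb{T}^n,E))$. Given $f=\sum_{k\in[-K,K]^n}e_k\otimes x_k\in\mathcal{T}(\mathbb{T}^n,E)$, take $\beta:=(K,K,\ldots,K)$, which trivially satisfies $\beta_j\geq K$. By the observation above, $\sum_{k\in[0,\beta]}e_k\otimes x_k=S_Rf$, hence
\[
\Big\|\sum_{k\in[0,\beta]}e_k\otimes x_k\Big\|_{L^p(\mathbb{T}^n,E)}=\|S_Rf\|_{L^p(\mathbb{T}^n,E)}\leq \|S_R\|_{\mathcal{L}(L^p)}\,\|f\|_{L^p(\mathbb{T}^n,E)},
\]
so \eqref{acot de la proy de riesz} holds with $C_p:=\|S_R\|_{\mathcal{L}(L^p)}$.

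For the converse, I would fix $p\in(1,\infty)$ and take any $f\in\mathcal{T}(\mathbb{T}^n,E)$, writing $f=\sum_{k\in[-K,K]^n}e_k\otimes x_k$. By hypothesis some $\beta$ with $\beta_j\geq K$ yields \eqref{acot de la proy de riesz}, and the observation identifies the left-hand side with $\|S_Rf\|_{L^p(\mathbb{T}^n,E)}$. Therefore
\[
\|S_Rf\|_{L^p(\mathbb{T}^n,E)}\leq C_p\|f\|_{L^p(\mathbb{T}^n,E)}\qquad\text{for all }f\in\mathcal{T}(\mathbb{T}^n,E).
\]
Invoking the density of $\mathcal{T}(\mathbb{T}^n,E)$ in $L^p(\mathbb{T}^n,E)$ (cited via the reference \cite{Na12}, as is already done in the paper), this estimate shows that $R$ is a discrete $L^p$-Fourier multiplier, so $E$ is UMD by definition.

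There is no real obstacle: the only subtlety is ensuring that one is allowed to choose $\beta$ coordinate-wise componentwise at least $K$ so that the sum $\sum_{k\in[0,\beta]}$ picks out precisely the nonnegative-orthant Fourier coefficients of $f$, and this is exactly what the hypothesis is tailored to permit.
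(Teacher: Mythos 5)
Your proof is correct and follows essentially the same route as the paper: both directions reduce to the observation that $\sum_{k\in[0,\beta]}e_k\otimes x_k=S_Rf$ whenever $\beta_j\geq K$, together with the definition of UMD via boundedness of the Riesz projection on $L^p(\mathbb{T}^n,E)$ and the density of $\mathcal{T}(\mathbb{T}^n,E)$. The only (harmless) difference is that the paper's forward direction establishes the stronger estimate for an arbitrary box $[\alpha,\beta]$ by composing the shifted multipliers $N_\beta\cdot R_\alpha$ from Theorem \ref{propiedades-multi-Fourier-discreto} (an estimate it reuses later in the proof of Theorem \ref{RESULTADO-PRINCIPAL}), whereas you only need the case $\alpha=0$, $\beta=(K,\dots,K)$ and can invoke $S_R$ directly.
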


\begin{proof}
$\Rightarrow]$ Let $E$ be a UMD$-$space, $1<p<\infty$ and 
$$f=\sum\limits_{k\in\left[  -K,K\right]^{n}}e_{k}\otimes x_{k}\  \in\  \mathcal{T}(\mathbb{T}^{n},E) \quad (K\in\N_0).$$
Then $R$ and $N$, defined as in
(\ref{ec proyeccion de riesz}) and (\ref{operator-N}), respectively, are
$L^{p}-$discrete Fourier multipliers. Due to Theorem
\ref{propiedades-multi-Fourier-discreto}$c)$,  $R_{\alpha}$ and $N_{\beta}$
are also $L^{p}-$discrete Fourier multipliers for all $\alpha,\beta\in\mathbb{Z}^{n}$. We set $x_k:=\mathbf{0}$ for $k\notin [-K,K]^n$. Then for all $\alpha,\beta\in\mathbb{Z}^{n}$ with $\alpha\leq\beta$ it holds
\begin{align}
\Big\Vert & \sum_{k\in\lbrack\alpha,\beta]}e_{k}\otimes x_{k}\Big\Vert
_{L^{p}\left(  \mathbb{T}^{n},E\right)  }   \nonumber\\
&=\Big\Vert \sum_{k\in
\mathbb{Z}^{n}}e_{k}\otimes N_{\beta}(k)R_{\alpha}(k)x_{k}\Big\Vert
_{L^{p}\left(  \mathbb{T}^{n},E\right)  }  = \Big\Vert S_{_{N_{\beta}\cdot R_{\alpha}}}\sum_{k\in\mathbb{Z}^{n}}%
e_{k}\otimes x_{k}\Big\Vert _{L^{p}\left(  \mathbb{T}^{n},E\right)
}\nonumber\\
&  \leq\left\Vert N_{\beta}\cdot R_{\alpha}\right\Vert _{p}\Big\Vert
\sum_{k\in\mathbb{Z}^{n}}e_{k}\otimes x_{k}\Big\Vert _{L^{p}\left(
\mathbb{T}^{n},E\right)  }  \leq \left\Vert N\right\Vert _{p}\left\Vert R\right\Vert _{p}\Vert
f \Vert _{L^{p}\left(
\mathbb{T}^{n},E\right)  }\text{,} \label{ec UMD implica estimat}%
\end{align}
due to Theorem \ref{propiedades-multi-Fourier-discreto}.\newline%
$\Leftarrow]$ Suppose that for $1<p<\infty$ there exists $C_p>0$ such that for each $f=\sum_{k\in\left[  -K,K\right]  ^{n}}%
e_{k}\otimes x_{k}\in\mathcal{T}(\mathbb{T}^{n},E)$ we can find
$\beta\in\mathbb{Z}^{n}$ with $\beta_{j}\geq K$ for all $j=1,\dots,n$ and such that
(\ref{acot de la proy de riesz}) holds. Then
\begin{align*}
\Vert S_Rf\Vert_{L^p(\T^n,E)} & = \Big\Vert \sum_{k\in\mathbb{Z}^{n}}e_{k}\otimes R(k)x_{k}\Big\Vert
_{L^{p}\left(  \mathbb{T}^{n},E\right)  }   = \Big\Vert \sum_{k\in
\lbrack0,K]^{n}}e_{k}\otimes x_{k}\Big\Vert _{L^{p}\left(  \mathbb{T}^{n},E\right)  }\\
&  = \Big\Vert \sum_{k\in\lbrack0,\beta]}e_{k}\otimes x_{k}\Big\Vert
_{L^{p}\left(  \mathbb{T}^{n},E\right)  }  \overset{\text{(\ref{acot de la proy de riesz})}}{\leq}C_{p}\left\Vert
f\right\Vert _{L^{p}\left(
\mathbb{T}^{n},E\right)  },
\end{align*}
for all $f=\sum_{k\in\left[  -K,K\right]  ^{n}}e_{k}\otimes x_{k}\in\mathcal{T}(\mathbb{T}^{n},E)$, and thus the operator vector-valued $n-$dimensional Riesz proyection is bounded in $L^{p}\left(  \mathbb{T}^{n},E\right)  $. Therefore $E$ is a UMD$-$space.
\end{proof}
\section{Multipliers of bounded variation; main result}

\begin{definition}
Let $G\subset\mathbb{Z}^{n}$. For a function $M:\mathbb{Z}^{n}\to{\mathcal{L}(E,F)}$ let  the \emph{restriction} of $M$ to $G$ be defined by
\[
M_{G}(k):=%
\begin{cases}
M(k), & \text{ if }k\in G,\\
0, & \text{ if }k\notin G.
\end{cases}
\]
In particular $M_{\mathbb{Z}^{n}}=M$. Let $\alpha,\beta\in\left(
\mathbb{Z\cup}\left\{  -\infty,\infty\right\}  \right)  ^{n}\ $with
$\alpha\leq\beta$. For the standar
basis of $\mathbb{R}^{n}$ $\left\{  \delta_{j}:j=1,\dots,n\right\}  $,  the difference operators $\Delta
^{\delta_{j}}$ are defined by
\[
\Delta^{\delta_{j}}M_{[\alpha,\beta]}(x):=\left\{
\begin{array}
[c]{ll}M_{[\alpha,\beta]}(x)-M_{[\alpha,\beta]}(x-\delta_{j}), & \text{if }x_{j}\neq\alpha_{j},\\
0\text{,} & \text{if }x_{j}=\alpha_{j}.
\end{array}
\right.
\]
Moreover, let $\Delta^{0}M_{[\alpha,\beta]}:=M_{[\alpha,\beta]}$,%
\[
\Delta^{\gamma}M_{[\alpha,\beta]}:=\Delta^{\gamma_{1}\delta_{1}}
\cdots\Delta^{\gamma_{n}\delta_{n}}M_{[\alpha,\beta]}\text{,}%
\qquad\text{for }\gamma=(\gamma_{1},\dots,\gamma_{n})\in\{1,0\}^{n}\text{,}%
\]
and let the \emph{variation} of $M$ on $[\alpha,\beta]$ be defined by 
\begin{equation}
\var\limits_{[\alpha,\beta]}M_{{[\alpha,\beta]}}:=\sum_{\xi\in
\lbrack\alpha,\beta]}\left\Vert \Delta^{\gamma_{\xi}}M_{[\alpha,\beta]}%
(\xi)\right\Vert,\label{variation de M}%
\end{equation}
where
$\gamma_{\xi}=(\gamma_{\xi_{1}},\dots,\gamma_{\xi_{n}})$ with
\begin{equation}
\gamma_{\xi_{j}}:=\left\{
\begin{array}
[c]{ll}%
1\text{,} & \text{if }\xi_{j}\neq\alpha_{j},\\
0\text{,} & \text{if }\xi_{j}=\alpha_{j}.
\end{array}
\right.  \label{ec-ast}%
\end{equation}
\end{definition}
Note that if $\alpha,\beta\in\mathbb{Z}^{n}$ with $\alpha\leq\beta$ and $M:\mathbb{Z}^{n}\to{\mathcal{L}(E,F)}$ is a function such that $M_{\left[\alpha,\beta\right]  }=0$ in $\mathbb{Z}^{n}$, then $\Delta^{\delta_{j}}M_{[\alpha,\beta]}(k)=0$ for all $k\in\mathbb{Z}^{n}$ and $j=1,\dots,n$, and in consequence $ \var\limits_{[\alpha,\beta]}M_{[\alpha,\beta]} = 0$.
\begin{remark}
\label{teorema4.42}Using properties of telescopic sums it can be seen that for
each $\beta\in\mathbb{Z}^{n}$ it holds
\begin{equation}
M(\beta)=\sum_{\xi\in\lbrack\alpha,\beta]}\Delta^{\gamma_{\xi}}M_{[\alpha
,\beta]}(\xi) \label{ec principal-1}%
\end{equation}
for all $\alpha\in\mathbb{Z}^{n}$ with $\alpha\leq\beta$.
\end{remark}

\begin{definition}
The coarse decomposition of $\mathbb{Z}^{n}$ is defined by: $D_{0}:=\{0\}$ and
for $d\in\mathbb{N}$,
\begin{multline*}
D_{d} :=\big\{  k\in\mathbb{Z}^{n}:|k_{1}|,\dots,|k_{l-1}|<2^{r+1},\ 2^{r}%
\leq|k_{l}|<2^{r+1},\\
 |k_{l+1}|,\dots,|k_{n}|<2^{r}\big\}  ,
\end{multline*}
where $d=nr+l$ with $r\in\mathbb{N}_{0}$ and $l\in\{1,2,\dots,n\}$. For $d\in\mathbb{N}$, $D_{d}=D_{d^{+}}\cup D_{d^{-}}$ where $D_{d^{\pm}}:=\left\{  k\in D_{d}\,:\,\pm k_{l}>0\right\}  $. Furthermore
\begin{align*}
\var\limits_{D_d}M & :=\var\limits_{D_{d^{+}}}M_{D_{d^{+}}} + \var\limits_{D_{d^{-}}}M_{D_{d^{-}}}\quad (d\in\N)\quad \text{and}\\
\var\limits_{D_{0}}M & :=\var\limits_{D_{0}}M_{D_{0}}.
\end{align*}
\end{definition}

Note that for $d\in\mathbb{N}$ and $D_{d^{\pm}}$ as in the above definition,
$D_{d^{\pm}}=\left[  \alpha_{d^{\pm}},\beta_{d^{\pm}}\right]  $ for
some $\alpha_{d^{\pm}},\beta_{d^{\pm}}\in\mathbb{Z}^{n}$ (for
example, $\alpha_{d^{+}}=\left(  -2^{r+1},...,-2^{r+1},2^{r},-2^{r},...,-2^{r}\right)$,
where $2^{r}$ is in the $l-$th position). Therefore $\var\limits_{D_{d^{\pm}}}M_{D_{d^{\pm}}}$ make sense.\\
\\
Now, the variational Marcinkiewicz condition, given in \cite{AB04} by%
\[
\sup_{k\in\mathbb{Z}}\left\Vert M_{k}\right\Vert +\sup_{j\geq0}\sum
\limits_{2^{j}\leq\left\vert k\right\vert \leq2^{j+1}}\left\Vert M_{k+1}%
-M_{k}\right\Vert <\infty\text{,}%
\]
will be generalised by the following definition.

\begin{definition}
Let $M:\mathbb{Z}^{n}\to{\mathcal{L}(E,F)}$ be uniformly
bounded. $M$ is called a function of bounded variation with respect to the coarse
decomposition of $\mathbb{Z}^{n}$, if there exists a positiv constant $C$ such
that
\begin{equation}\label{generalizacion de la condicion variacional de Marcinkiewics}%
\sup_{d\in\mathbb{N}_{0}}\var\limits_{D_{d}}M<C\text{.}
\end{equation}
\end{definition}

\begin{lemma}
\label{ejemplo-def-M} Let $j\in\mathbb{N}_{0}$. Then the function
$M:=M_j:\mathbb{Z}^{n}\to{\mathcal{L}(E)}$ defined by
\begin{equation}\label{def-M}
M(k):=\begin{cases}
I_E, & \text{if}\ k=k_{1}\delta_{1}\ \text{with}\ k_{1}\in\left[7\cdot2^{j-3},2^{j}\right] \ \text{and}\ j\geq3,\\
0, & \text{otherwise}
\end{cases}
\end{equation}
is of bounded variation with respect to the coarse decomposition of
$\mathbb{Z}^{n}$.
\end{lemma}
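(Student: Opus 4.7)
The plan is to exploit the extreme sparsity of $M$. For $j \leq 2$ one has $M \equiv 0$ and the claim is trivial, so assume $j \geq 3$. Then $\supp(M)$ is contained in the one-dimensional segment
\[
L := \{(k_1, 0, \ldots, 0) \in \Z^n : k_1 \in [7\cdot 2^{j-3}, 2^j]\},
\]
lying on the positive $k_1$-axis.

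I first identify which blocks $D_d$ of the coarse decomposition meet this support. A point $p = (k_1, 0, \ldots, 0) \in L$ satisfies $|p_1| = k_1 > 0 = |p_i|$ for $i \geq 2$, so it lies in $D_{nr+1^+}$ with $r$ determined by $2^r \leq k_1 < 2^{r+1}$. As $k_1$ ranges over $[7\cdot 2^{j-3}, 2^j]$, either $r = j-1$ (when $k_1 \leq 2^j - 1$) or $r = j$ (when $k_1 = 2^j$). Hence $L \subset D_{n(j-1)+1^+} \cup D_{nj+1^+}$, and for every other index (including all $D_{d^-}$ and $D_0 = \{0\}$) the restriction $M_{D_d}$ vanishes identically; by the remark immediately following the definition of the variation this forces $\var_{D_d} M = 0$ there.

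It remains to bound the variation on the two relevant blocks. On each block $[\alpha,\beta]$ I will use the expansion
\[
\Delta^{\gamma_\xi} M_{[\alpha,\beta]}(\xi) = \sum_{\mu \in \{0,1\}^n,\,\mu \leq \gamma_\xi} (-1)^{|\mu|} M_{[\alpha,\beta]}(\xi - \mu),
\]
so the summand at $\xi$ is nonzero only if some $\xi - \mu$ lies in $\supp(M) \cap [\alpha,\beta]$. Since that intersection has all components vanishing after the first, this forces $\xi_i \in \{0,1\}$ for $i \geq 2$ (giving at most $2^{n-1}$ choices in the transverse directions), with the unique valid $\mu_i = \xi_i$ there. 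The decisive observation concerns the $k_1$ direction: whenever $\xi_1$ is strictly interior to the support segment, both $\mu_1 = 0$ and $\mu_1 = 1$ produce a point of $\supp(M)$, and the two resulting terms have opposite signs and cancel. Only the ``endpoints'' of the segment can survive, namely $\xi_1 = 7\cdot 2^{j-3}$ on $D_{n(j-1)+1^+}$, and $\xi_1 \in \{2^j, 2^j+1\}$ on $D_{nj+1^+}$; at $\xi_1 = 2^j = \alpha_1$ in the second block the convention $\gamma_{\xi_1} = 0$ forces $\mu_1 = 0$, which is precisely the one valid choice.

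Counting gives $\var_{D_{n(j-1)+1^+}} M \leq 2^{n-1}$ and $\var_{D_{nj+1^+}} M \leq 2^n$, so $\sup_{d \in \N_0} \var_{D_d} M \leq 2^n$, a bound independent of $j$, as required. The main obstacle is the bookkeeping, simultaneously tracking the constraint $\mu \leq \gamma_\xi$ (which becomes active exactly at the lower boundary of each block) and the sign cancellation in the $k_1$ direction; once both are handled cleanly the result falls out immediately.
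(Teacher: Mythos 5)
Your proof is correct, and it follows the same overall strategy as the paper's: locate the blocks of the coarse decomposition that meet the very sparse support of $M$ and estimate the variation there directly. The difference lies in the execution, and your version is in fact the more complete one. The paper's printed proof only identifies the block $D_{nj+1}$, on which the restriction of $M$ is supported at the single point $2^{j}\delta_{1}$, and computes the iterated difference there by telescoping in the transverse directions; it does not treat the block $D_{n(j-1)+1}$, which contains the remaining $2^{j-3}$ support points $k_{1}\delta_{1}$ with $7\cdot 2^{j-3}\le k_{1}\le 2^{j}-1$ (since $7\cdot2^{j-3}\ge 2^{j-1}$, all of these have $r=j-1$). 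Your analysis covers both blocks, and the extra ingredient you supply --- the cancellation of the $\mu_{1}=0$ and $\mu_{1}=1$ terms at points interior to the support segment, so that only the endpoints of the interval contribute --- is exactly what is needed to bound the variation on $D_{n(j-1)+1^{+}}$ by $2^{n-1}$ uniformly in $j$. Your handling of the constraint $\mu\le\gamma_{\xi}$ at the lower face $\xi_{1}=\alpha_{1}=2^{j}$ of the second block is also correct, and the resulting bound $\sup_{d\in\N_0}\var_{D_{d}}M\le 2^{n}$, together with the trivial estimate $\Vert M(k)\Vert_{\mathcal{L}(E)}\le 1$ required by the definition, establishes the lemma. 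In short: same route, but you close a case the paper's own argument glosses over.
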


\begin{proof}
By definition $M:\mathbb{Z}^{n}\to{\mathcal{L}(E)}$ satisfies
$\left\Vert M(k)x\right\Vert \leq\left\Vert x\right\Vert $ for all
$k\in\mathbb{Z}^{n}$ and $x\in E$. Therefore $\left\{  M(k):k\in\mathbb{Z}%
^{n}\right\}  \subset{\mathcal{L}(E)}$ is uniformly bounded with
\begin{equation}
\left\Vert M(k)\right\Vert _{{\mathcal{L}(E)}}\leq1\quad\text{for all }%
k\in\mathbb{Z}^{n}. \label{ec41}%
\end{equation}
We will show that $M$ satisfies
(\ref{generalizacion de la condicion variacional de Marcinkiewics}). In fact, if $M_{D_{0}}=0$, then $\var\limits_{D_{0}}M=0$. Now, we fix $d\in\mathbb{N}$ with $d=nj+l$, $j\in\mathbb{N}_{0}$ and $l\in\left\{1,\dots,n\right\}$. Due to \eqref{def-M} we have:
\begin{itemize}
\item[i)] If $j<3$, then $M_{D_{d}}=0$ and therefore $\var\limits_{D_{d}}M=0$.

\item[ii)] From $j\geq3$ and $l\in\{2,\dots,n\}$ it follows $M_{D_{d}}=0$ because if $k\in D_{d}$, then $k_{l}\neq0$
and hence $k\neq k_{1}\delta_{1}$ which yields $M_{D_{d}}(k)=0$. Moreover, by
definition $M_{D_{d}}(k)=0$ if $k\notin D_{d}$. Therefore $\var\limits_{D_{d}}M=0$.

\item[iii)] If $j\geq3$ and $l=1$, then for each $k\in\mathbb{Z}^{n}$ it holds that
\[
M_{D_{d}}(k)=\begin{cases}
I_{E}, & \text{if }k=2^{j}\delta_{1},\\
0, & \text{otherwise.}
\end{cases}
\]
It follows that $M_{D_{d^{-}}}=M_{{D_{d^{+}}\setminus\{2^{j}\delta_{j}\}}}=0$,
and then
\[
\var\limits_{D_{d}}M=\sum_{k\in D_{d^{+}}}\left\Vert \Delta^{\gamma_{k}%
}M_{D_{d^{+}}}(k)\right\Vert _{{\mathcal{L}(E)}}=\left\Vert \Delta
^{\gamma_{2^{j}\delta_{1}}}M(2^{j}\delta_{1})\right\Vert _{{\mathcal{L}(E)}}.
\]
Since $D_{d^{+}}=[\alpha_{d^{+}},\beta_{d^{+}}]$ with $\alpha_{d^{+}}%
=(2^{j},-2^{j},\dots,-2^{j})$, we have
\begin{align*}
&\var\limits_{D_{d}}M\\
  &  =\left\Vert \Delta^{\gamma_{2^{j}\delta{_{1}}}}M_{D_{d^{+}}}(2^{j}\delta_{1})\right\Vert _{{\mathcal{L}(E)}}
 =\big\Vert \Delta^{0}\Delta^{\delta_{2}}\cdots\Delta^{\delta_{n}}M_{D_{d^{+}}}(2^{j}\delta_{1})\big\Vert _{{\mathcal{L}(E)}}\\
&  =\big\Vert \Delta^{\delta_{2}}\cdots\Delta^{\delta_{n}}M_{D_{d^{+}}}(2^{j}\delta_{1})\big\Vert _{{\mathcal{L}(E)}}\\
&  =\big\Vert  \Delta^{\delta_{2}}\cdots\Delta^{\delta
_{n-1}} \big(  M_{D_{d^{+}}}(2^{j}\delta_{1})-\underbrace
{M_{D_{d^{+}}}(2^{j}\delta_{1}-\delta_{n})}_{=0}\big)  \big\Vert
_{{\mathcal{L}(E)}}\\
&  =\big\Vert \Delta^{\delta_{2}}\cdots\Delta^{\delta_{n-1}}M_{D_{d^{+}}}(2^{j}\delta_{1})\big\Vert _{{\mathcal{L}(E)}} = \cdots 
 =\left\Vert M(2^{j}\delta_{1})\right\Vert _{{\mathcal{L}(E)}}\leq1,
\end{align*}
due to (\ref{ec41}). 
\end{itemize}
In consequence $M:\mathbb{Z}^{n}\to{\mathcal{L}(E)}$ defined by (\ref{def-M}) is of bounded variation with
respect to the coarse decomposition of $\mathbb{Z}^{n}$.
\end{proof}

\begin{lemma}
\label{de la contenencia del soporte} Let $(\phi_{j})_{j\geq0}$ be as in Lemma
\ref{lema04.1}.

\begin{itemize}
\item[a)] For $j\geq1$ it holds
\begin{equation}
\supp(\phi_{j})\cap\mathbb{Z}^{n}\,\subset\,\bigcup\limits_{d=n(j-m-1)+1}^{nj+n}D_{d} \ \,=:\,\ D_{j}^{n}\text{,}%
\end{equation}
where $D_{-d}:=D_{0}$ for $d\in\mathbb{N}$ and $m$ is the smallest
non-negative integer satisfying $\sqrt{n}\leq2^{m}$.

\item[b)] $\supp(\phi_{0})\cap\mathbb{Z}^{n}\subset\bigcup\limits_{d=0}%
^{n}D_{d}$.
\end{itemize}
\end{lemma}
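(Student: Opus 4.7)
The plan is to reduce the support containment to bounds on the max-norm of the integer vectors $k \in \supp(\phi_j)$, from which the unique index $d$ with $k \in D_d$ is then read off directly from the coarse decomposition. Writing $M := \max_{i=1,\dots,n} |k_i|$, the whole argument rests on the elementary two-sided estimate
\[
M \;\leq\; |k| \;\leq\; \sqrt{n}\,M
\]
together with the defining property $\sqrt{n} \leq 2^m$ of the integer $m$.

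First I would extract from the explicit construction in Lemma~\ref{lema04.1} a stronger support statement than the one encoded in the mere definition of $\Phi(\R^n)$: for $j \geq 1$ one has $\supp(\phi_j) \subset K_j := \{x \in \R^n : 3\cdot 2^{j-2} \leq |x| \leq 7 \cdot 2^{j-2}\}$ (since $\phi_j = \varphi_j/\Psi$ with $\Psi \geq 1$ and $\supp(\varphi_j) \subset K_j$), and for $j=0$ the strict containment $\supp(\phi_0) \subsetneq \overline{B}_2(0)$ from the same construction yields $\supp(\phi_0) \subset B_2(0)$.

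For part (a), take $k \in \supp(\phi_j) \cap \Z^n$ with $j \geq 1$. The bounds defining $K_j$ give
\[
2^{j-m-1} \;<\; \tfrac{3}{4}\cdot 2^{j-m} \;=\; \frac{3\cdot 2^{j-2}}{2^m} \;\leq\; \frac{|k|}{\sqrt{n}} \;\leq\; M \;\leq\; |k| \;\leq\; 7\cdot 2^{j-2} \;<\; 2^{j+1}.
\]
Let $r \in \N_0$ be the unique integer with $2^r \leq M < 2^{r+1}$; then $j-m-1 \leq r \leq j$. Letting $l \in \{1,\dots,n\}$ be the largest index with $|k_l| \geq 2^r$, the very definition of the coarse decomposition yields $k \in D_{nr+l}$ with $nr+l \in \{n(j-m-1)+1,\dots,nj+n\}$; the convention $D_{-d}:= D_0$ absorbs the case $j - m - 1 < 0$.

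For part (b), take $k \in \supp(\phi_0) \cap \Z^n$. Then $|k| < 2$ forces $|k_i|^2 \leq |k|^2 < 4$ and, since $k_i \in \Z$, $|k_i| \leq 1$ for each $i$. If $k = 0$ then $k \in D_0$; otherwise $M = 1$, so $r = 0$ and $k \in D_{l(k)}$ for some $l(k) \in \{1,\dots,n\}$. Either way $k \in \bigcup_{d=0}^n D_d$. The only point requiring a bit of care is ensuring the strict inequality $r \geq j-m-1$ in part~(a) — this is precisely where the factor $3$ in the lower bound of $K_j$ beats the threshold $2^{j-m-1}$ — and, in part~(b), confirming that the integer points $\pm 2 e_i$ on the boundary of $\overline{B}_2(0)$ are excluded by the strict containment $\supp(\phi_0) \subsetneq \overline{B}_2(0)$ coming from Lemma~\ref{lema04.1}.
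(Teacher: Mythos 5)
Your proof is correct and follows essentially the same route as the paper: both arguments rest on the comparison $|k|_\infty \leq |k| \leq \sqrt{n}\,|k|_\infty$ together with $\sqrt{n}\leq 2^m$ to pin the index $r$ of the coarse block containing $k$ into $\{j-m-1,\dots,j\}$, the only difference being that you argue directly from the sharper support bounds $3\cdot 2^{j-2}\leq|k|\leq 7\cdot 2^{j-2}$ inherited from the construction in Lemma \ref{lema04.1}, whereas the paper argues by contradiction from $2^{j-1}<|k|<2^{j+1}$. Your explicit flagging of the boundary points $\pm 2\delta_i$ in part (b) matches the (implicit) use the paper makes of the strict containment $\supp(\phi_0)\subsetneq\overline{B}_2(0)$.
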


\begin{proof}
a) Let $k\in\mathbb{Z}^{n}\cap\supp(\phi_{j})$ with $j\geq1$, then
$2^{j-1}<|k|<2^{j+1}$. If $k\notin D_{j}^{n}$, $k\in D_{d}$ with $d=nr+l$ for
some $l\in\{1,\dots,n\}$ and some $r\geq j+1$ or $r\leq j-m-2$. In the first
case it holds
\[
|k|\geq|k_{l}|\geq2^{r}\geq2^{j+1},
\]
which contradicts that $|k|<2^{j+1}$. Now, we consider the second case, i.e.
$k\in D_{d}$ with $d=nr+l$ for some $l\in\{1,\dots,n\}$ and some $r\leq
j-m-2$. If $r\geq0$, then $|k_{s}|<2^{r+1}\leq2^{j-m-1}$ for all
$s\in\{1,\dots,n\}$, and thus
\[
|k|\leq\sqrt{n}|k|_{\infty}\leq\sqrt{n}2^{j-m-1}\leq2^{j-1},
\]
which now is in contradiction with $|k|>2^{j-1}$. The same happens when $r<0$ since
$D_{-d}=D_{0}$. In consequence $k\in D_{d}$ with $d=nr+l$ for some
$l\in\{1,\dots,n\}$ and some $r\in\{j-m-1,\dots,j\}$.\\
b) If $0\neq k=(k_{1},\dots,k_{n})\in\supp(\phi_{0})\cap\mathbb{Z}^{n}$,
then $|k_{s}|<2$ for all $s\in\{1,2,\dots,n\}$ and therefore $k\in D_{l}$ for some
$l\in\{1,2,\dots,n\}$, since otherwise there would be some $r\in\mathbb{N}$
and $s\in\{1,2,\dots,n\}$ such that $|k_{s}|\geq2^r$. Then we have that
\[
\lbrack\text{supp}(\phi_{0})\cap\mathbb{Z}^{n}]\!\setminus\!\{0\}\subset
\bigcup_{d=1}^{n}D_{d}.
\]
From this follows b), due to $0\in D_{0}$.
\end{proof}

Now, we will prove the main result of this paper. But before note that
\begin{equation}
\sum_{k\in\lbrack\alpha,\beta]}\sum_{l\in\lbrack\alpha,k]}a_{l}b_{k}%
=\sum_{k\in\lbrack\alpha,\beta]}a_{k}\sum_{l\in\lbrack k,\beta]}b_{l}\text{,}
\label{ec principal-2}%
\end{equation}
for all $\alpha,\beta\in\mathbb{Z}^{n}$ with $\alpha\leq\beta$.

\begin{theorem}
\label{RESULTADO-PRINCIPAL} Let $s\in\mathbb{R}$, $1<p<\infty$ and $1\leq q\leq\infty$.
Each function $M:\mathbb{Z}^{n}\to\mathcal{L}\left(  E\right)  $
of bounded variation with respect to the coarse decomposition of
$\mathbb{Z}^{n}$ is a Fourier multiplier on $B_{p,q}^{s}(\mathbb{T}^{n},E)$ if
and only if $E$ is a UMD-space.
\end{theorem}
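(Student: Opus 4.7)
The proof has two directions, and the statement of the introduction tells us that the forward direction (variation condition $\Rightarrow$ UMD) is the delicate one.

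For the easy direction (\emph{UMD} $\Rightarrow$ \emph{multiplier}), my plan is to verify the criterion (\ref{criterio multiplicador}) of Theorem \ref{teor criterio multiplicador}. Fix $j\in\N_0$ and $f\in B_{p,q}^s(\T^n,E)$. By Lemma \ref{de la contenencia del soporte}, $\supp(\phi_j)\cap\Z^n$ is contained in the finite union $D_j^n$ of at most $n(m+2)$ coarse blocks $D_d$ (which depend only on $n$). Hence
\[
\sum_{k\in\Z^n} e_k\otimes \phi_j(k)M(k)\hat f(k) \;=\; \sum_{d\in I_j}\sum_{k\in D_d} e_k\otimes \phi_j(k)M(k)\hat f(k).
\]
On each half-block $D_{d^\pm}=[\alpha,\beta]$, I apply the telescoping identity of Remark \ref{teorema4.42} to write $M(k)=\sum_{\xi\in[\alpha,k]}\Delta^{\gamma_\xi}M_{[\alpha,\beta]}(\xi)$ and then exchange order of summation via (\ref{ec principal-2}). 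This produces a sum $\sum_{\xi\in[\alpha,\beta]}\Delta^{\gamma_\xi}M_{[\alpha,\beta]}(\xi)\cdot T_{\xi,\beta}f_j$, where $T_{\xi,\beta}$ is a product of iterated truncations ``$\geq\xi_i$ and $\leq\beta_i$'' of the Fourier coefficients of the trigonometric polynomial $f_j:=\sum_{k\in D_{d^\pm}}e_k\otimes\phi_j(k)\hat f(k)$. By Corollary \ref{Teorema1-caracterizacion-UMD} and Theorem \ref{propiedades-multi-Fourier-discreto}, each such iterated truncation is bounded on $L^p(\T^n,E)$ by a constant $C_p$ depending only on $p$ and $n$ (the UMD constants). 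Taking $L^p$ norms inside, applying the triangle inequality and the variation hypothesis yields
\[
\Big\|\sum_{k\in D_{d^\pm}}e_k\otimes\phi_j(k)M(k)\hat f(k)\Big\|_{L^p} \;\leq\; C_p\,\var\nolimits_{D_{d^\pm}}M_{D_{d^\pm}}\cdot\|f_j\|_{L^p}.
\]
Because $\#I_j$ is bounded independently of $j$ and $\sup_d\var_{D_d}M<\infty$, summing over $d\in I_j$ and reusing the resolution-of-unity argument of Proposition \ref{teor4.13} / Theorem \ref{teorema-ind-res-en-besov} bounds the $L^p$ norm of the $j$-th block of $S_Mf$ by a constant multiple of the $L^p$ norms of the $(j-1)$-th, $j$-th, and $(j+1)$-th blocks of $f$; raising to the $q$-th power, multiplying by $2^{sjq}$, and summing in $j$ produces the desired Besov estimate.

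For the converse (\emph{multiplier} $\Rightarrow$ \emph{UMD}), my plan is to use Corollary \ref{Teorema1-caracterizacion-UMD}: it suffices to exhibit, for some $p\in(1,\infty)$ and each $K\in\N_0$, a uniform $L^p$ bound on some partial Riesz projection applied to trigonometric polynomials $f=\sum_{k\in[-K,K]^n}e_k\otimes x_k$. The candidates are the functions $M_j$ of Lemma \ref{ejemplo-def-M}, which by that lemma have variations uniformly bounded in $j$. By hypothesis each $M_j$ is a $B_{p,q}^s$-Fourier multiplier; together with a closed graph / uniform boundedness argument (using that the multiplier norms $\|M_j\|$ in the Besov sense can be taken bounded, either by the quantitative form of the $\Leftarrow$ argument turned into an a priori bound, or by reducing to a family of single multipliers), this yields $\|S_{M_j}f\|_{B_{p,q}^s}\leq C\|f\|_{B_{p,q}^s}$ uniformly. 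Now I would choose $f$ concentrated on the $\delta_1$-axis with $\hat f(k)=x_{k_1}$ for $k=k_1\delta_1,\ k_1\in[7\cdot 2^{j-3},2^j]$ (and zero otherwise), so that $S_{M_j}f=f$; and note that on such trigonometric polynomials, whose Fourier support lies in a single dyadic annulus in the first coordinate, the Besov norm and the $L^p(\T^n,E)$ norm are comparable up to a fixed multiplicative constant (one computes that only finitely many $\phi_l(k)$ are nonzero on the support). Combining these observations with dilations (Theorem \ref{propiedades-multi-Fourier-discreto}(c)) and translations, I recover the partial-sum $L^p$ bound of (\ref{acot de la proy de riesz}), so that $E$ is UMD.

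The main obstacle I expect is the $\Leftarrow$ direction, specifically controlling the higher-dimensional summation-by-parts carefully enough that the partial sums $T_{\xi,\beta}f_j$ really decompose into a bounded number of iterated rectangular truncations of Fourier series, each handled by the multi-parameter UMD argument of Corollary \ref{Teorema1-caracterizacion-UMD}; in dimension $n\geq 2$ the signs $\gamma_\xi$ determine which mixed differences appear, and one must keep careful bookkeeping of the $\alpha_{d^\pm}$, $\beta_{d^\pm}$ endpoints so that the total number of rectangular truncations remains independent of $j$ and the variation stays the controlling quantity. A secondary difficulty in the $\Rightarrow$ direction is obtaining a uniform-in-$j$ multiplier constant, which may require running the $\Leftarrow$ argument only after the UMD conclusion is in hand, or invoking the closed graph theorem on a suitably chosen family all at once.
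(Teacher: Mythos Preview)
Your $\Leftarrow$ direction (UMD $\Rightarrow$ multiplier) is essentially the paper's argument: cover $\supp(\phi_j)\cap\Z^n$ by boundedly many coarse blocks via Lemma \ref{de la contenencia del soporte}, on each half-block use the telescoping identity (\ref{ec principal-1}) and the order-exchange (\ref{ec principal-2}), and bound the resulting rectangular partial sums with the UMD estimate (\ref{ec UMD implica estimat}). The extra ``$(j-1),j,(j+1)$'' shuffle you add at the end is unnecessary --- after the UMD step the partial sum over $[k,\beta_{d^+}]$ is already controlled by the full $j$-th block $\sum_{k\in\Z^n}e_k\otimes\phi_j(k)\hat f(k)$ --- but it is harmless.

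Your $\Rightarrow$ direction has a genuine gap. You propose a test function $f$ with Fourier support exactly $\{k_1\delta_1:\,k_1\in[7\cdot 2^{j-3},\,2^j]\}$, i.e.\ exactly where $M_j=I_E$. Then $S_{M_j}f=f$ and the multiplier inequality becomes the tautology $\|f\|\le C\|f\|$, from which no Riesz-projection bound can be extracted. The paper's point is to take the test polynomial $h$ with Fourier support on the \emph{larger} interval $\{k_1\delta_1:\,7\cdot 2^{j-3}\le k_1\le 3\cdot 2^{j-1}\}$, chosen precisely so that (by Lemma \ref{lema04.1}(c),(d)) $\phi_j\equiv 1$ and $\phi_l\equiv 0$ for $l\neq j$ on this support. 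Then $S_{M_j}h$ is the \emph{truncation} of $h$ to $[7\cdot 2^{j-3},\,2^j]$, both Besov norms collapse to $2^{sj}$ times $L^p$-norms, and the multiplier bound yields the nontrivial estimate (\ref{ec4.4-1}), which after a translation gives (\ref{acot de la proy de riesz}).

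The uniformity-in-$j$ issue you flag is real and not explicitly addressed in the paper either; the clean fix is to observe that the $M_j$ have pairwise disjoint supports lying in at most two coarse blocks each, so $M^*:=\sum_{j\ge 3}M_j$ is still of bounded variation and the hypothesis gives a \emph{single} constant $C$ for $S_{M^*}$, which acts as $S_{M_j}$ on the test polynomials above.
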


\begin{proof}
$\Leftarrow]$ Let $E$ be a UMD-space. Suppose that
$M:\mathbb{Z}^{n}\to\mathcal{L}\left(  E\right)  $ satisfies
(\ref{generalizacion de la condicion variacional de Marcinkiewics}), $f\in B_{p,q}^{s}(\mathbb{T}^{n},E)$ and let
$(\phi_{j})_{j\geq0}$ be as in Lemma \ref{lema04.1}. Due to Lemma
\ref{de la contenencia del soporte} we obtain that for $j\geq1$ and
$x\in\mathbb{T}^{n}$ fixed it holds%
\begin{align}
&  \Big\Vert \sum_{k\in\mathbb{Z}^{n}}e^{ik\cdot x}\phi_{j}(k)M(k)\hat
{f}(k)\Big\Vert _{{E}} = \Big\Vert \sum_{k\in\text{supp}(\phi_{j})\cap\mathbb{Z}^{n}}e^{ik\cdot
x}\phi_{j}(k)M(k)\hat{f}(k)\Big\Vert _{{E}}\nonumber\\
& \leq \sum_{d=n(j-m-1)+1}^{n(j+1)}\Big\Vert \sum_{k\in D_{d}}e^{ik\cdot x}%
\phi_{j}(k)M(k)\hat{f}(k)\Big\Vert _{{E}}\nonumber\\
&  \leq\sum_{d=n(j-m-1)+1}^{n(j+1)}\bigg(  \Big\Vert \sum_{k\in D_{d^{+}}%
}e^{ik\cdot x}\phi_{j}(k)M(k)\hat{f}(k)\Big\Vert _{{E}}\nonumber\\
& \qquad\qquad\qquad\qquad\qquad\qquad + \quad\Big\Vert\sum_{k\in D_{d^{-}}}e^{ik\cdot x}\phi_{j}(k)M(k)\hat{f}(k)\Big\Vert
_{E}\bigg). \label{ec 1 prueba teo principal}%
\end{align}
Now we consider the sum over $D_{d^{+}}:=[\alpha_{_{d^{+}}},\beta_{_{d^{+}}}]$.%
\begin{align*}
&  \Big\Vert \sum_{k\in D_{d^{+}}}e^{ik\cdot x}\phi_{j}(k)M(k)\hat
{f}(k)\Big\Vert _{{E}}\\
&  \!\!\overset{\text{\eqref{ec principal-1}}}{=}\Big\Vert \sum_{k\in D_{d^{+}}}\sum_{\xi\in\lbrack\alpha_{d^{+}},k]}\Delta^{\gamma_{\xi}}M_{[\alpha_{d^{+}},k]}(\xi)e^{ik\cdot x}\phi_{j}(k)\hat{f}(k)\Big\Vert _{{E}}\\
&  =\Big\Vert \sum_{k\in D_{d^{+}}}\sum_{\xi\in\lbrack\alpha_{d^{+}},k]}\Delta^{\gamma_{\xi}}M_{[\alpha_{d^{+}},\beta_{d^{+}}]}(\xi)e^{ik\cdot
x}\phi_{j}(k)\hat{f}(k)\Big\Vert _{{E}}\\
&  \!\!\overset{\text{\eqref{ec principal-2}}}{=}\Big\Vert \sum_{k\in D_{d^{+}}}\Delta^{\gamma_{k}}M_{[\alpha_{d^{+}},\beta_{d^{+}}]}(k)\sum_{\xi\in\lbrack
k,\beta_{d^{+}}]}e^{i\xi\cdot x}\phi_{j}(\xi)\hat{f}(\xi)\Big\Vert _{{E}}\\
&  \leq\sup_{k\in D_{d^{+}}}\sum_{k\in
D_{d^{+}}}\left\Vert \Delta^{\gamma_{k}}M_{[\alpha_{d^{+}},\beta_{d^{+}}]}(k)\right\Vert _{{\mathcal{L}\left(  E\right)  }}\Big\Vert \sum_{\xi\in\lbrack k,\beta_{d^{+}}]}e^{i\xi\cdot x}\phi_{j}(\xi)\hat{f}(\xi)\Big\Vert _{{E}}\\
&  \!\!\overset
{\text{(\ref{generalizacion de la condicion variacional de Marcinkiewics})}}{\leq}C\sup_{k\in D_{d^{+}}}\Big\Vert \sum_{\xi\in\lbrack k,\beta_{d^{+}}]}e^{i\xi\cdot x}\phi_{j}(\xi)\hat{f}(\xi)\Big\Vert _{{E}} \overset{\text{(\ref{ec UMD implica estimat})}}{\leq}K_{p}\Big\Vert
\sum_{k\in\mathbb{Z}^{n}}e^{ik\cdot x}\phi_{j}(k)\hat{f}(k)\Big\Vert _{{E}}\text{.}%
\end{align*}
We get the same estimate for the sum over $D_{d^{-}}$ with a similar procedure. Then, from
\eqref{ec 1 prueba teo principal} it follows that
\[
\Big\Vert \sum_{k\in\mathbb{Z}^{n}}e^{ik\cdot x}\phi_{j}(k)M(k)\hat
{f}(k)\Big\Vert _{{E}}\leq2K_{p}n(m+2)\Big\Vert \sum_{k\in\mathbb{Z}^{n}%
}e^{ik\cdot x}\phi_{j}(k)\hat{f}(k)\Big\Vert _{{E}}\text{.}%
\]
Analogously, using Lemma \ref{de la contenencia del soporte} $b)$  we obtain
\[
\Big\Vert \sum_{k\in\mathbb{Z}^{n}}e^{ik\cdot x}\phi_{0}(k)M(k)\hat
{f}(k)\Big\Vert _{E}\leq2K_{p}n\Big\Vert \sum_{k\in\mathbb{Z}^{n}}e^{ik\cdot x}\phi_{0}(k)\hat{f}(k)\Big\Vert _{{E}}\text{.}%
\]
Thus, there exists a constant $C>0$ such that
\[
\Big\Vert \sum_{k\in\mathbb{Z}^{n}}e_{k}\otimes\phi_{j}(k)M(k)\hat
{f}(k)\Big\Vert _{L^{p}\left(  \mathbb{T}^{n}.E\right)  }\leq C\Big\Vert
\sum_{k\in\mathbb{Z}^{n}}e_{k}\otimes\phi_{j}(k)\hat{f}(k)\Big\Vert
_{L^{p}\left(  \mathbb{T}^{n}.E\right)  }%
\]
for all $f\in B_{p,q}^{s}(\mathbb{T}^{n},E)$ and $j\in\mathbb{N}_{0}$, and
therefore
\[
\Big\Vert \sum_{k\in\mathbb{Z}^{n}}e_{k}\otimes M(k)\hat{f}(k)\Big\Vert
_{B_{p,q}^{s}(\mathbb{T}^{n},E)}\leq C\left\Vert f\right\Vert _{B_{p,q}^{s}(\mathbb{T}^{n},E)}.
\]
Thus, Theorem \ref{teor criterio multiplicador} implies that $M$ is a 
$B_{p,q}^{s}(\T^n,E)$-Fourier multiplier.

$\Rightarrow]$ Now, we suppose that each function $M:\mathbb{Z}^{n}\to\mathcal{L}\left(  E\right)  $ satisfying
\eqref{generalizacion de la condicion variacional de Marcinkiewics} is a
 $B_{p,q}^{s}(\T^n,E)$-Fourier multiplier. Let $(\phi_{\ell})_{\ell\in
\mathbb{N}_{0}}\in\Phi(\mathbb{R}^{n})$ be as in Lemma \ref{lema04.1} and fix $j\in\N$ with $j\geq3$. For this $j$ let
$M:\mathbb{Z}^{n}\to\mathcal{L}\left(  E\right)  $ be the function
given in Lemma \ref{ejemplo-def-M}. Moreover, let us consider an
arbitrary sequence $\left(  x_{k}\right)  _{k\in\mathbb{Z}^{n}}$ in $E$ and
the $E$-valued trigonometric polynomial
\[
h:=\sum_{\substack{k=k_{1}\delta_{1},\\7\cdot2^{j-3}\leq k_{1}\leq
3\cdot2^{j-1}}}e_{k}\otimes x_{k}.%
\]
This $h$ can be written as
\[
h=\sum_{\substack{k=k_{1}\delta_{1},\\7\cdot2^{j-3}\leq k_{1}\leq3\cdot
2^{j-1}}}e_{k}\otimes\hat{h}(k)\text{,}%
\]
where $\hat{h}(k)=0$ for $k\notin\left\{  k_{1}\delta_{1}\,:\,7\cdot
2^{j-3}\leq k_{1}\leq3\cdot2^{j-1}\right\}  $ and $\hat{h}(k)=x_{k}$ else, due
to Remark \ref{obs2}. By Lemma \ref{lema04.1}, $\phi_{j}(x)=1$ for all
$x\in\mathbb{R}^{n}$ with $7\cdot2^{j-3}\leq|x|\leq3\cdot2^{j-1}$ and  $\phi_{l}(x)=0$ for all $x\in\mathbb{R}^{n}$ with $7\cdot2^{j-3}%
\leq|x|\leq3\cdot2^{j-1}$ and $l\neq j$. Thus
\begin{align}
 \Big\Vert \sum_{k\in\mathbb{Z}^{n}}&\!e_{k}\otimes M(k)\hat{h}(k)\Big\Vert
_{{B_{p,q}^{s}(\mathbb{T}^{n},E)}}^{q}  \nonumber\\
& =\sum_{l\geq0}2^{qsl}\bigg\Vert
\sum_{\substack{k=k_{1}\delta_{1},\\7\cdot2^{j-3}\leq k_{1}\leq3\cdot2^{j-1}}}\!e_{k}\otimes\phi_{l}(k)M(k)\hat{h}(k)\bigg\Vert _{L^{p}\left(
\mathbb{T}^{n}.E\right)  }^{q}\nonumber\\
&  \!\!\!\overset{\text{(\ref{def-M})}}{=}\sum_{l\geq0}2^{qsl}\bigg\Vert
\sum_{\substack{k=k_{1}\delta_{1},\\7\cdot2^{j-3}\leq k_{1}\leq2^{j}}}e_{k}\otimes\phi_{l}(k)\hat{h}(k)\bigg\Vert _{L^{p}\left(  \mathbb{T}^{n}.E\right)  }^{q} \nonumber\\
&= 2^{qsj}\bigg\Vert \sum_{\substack{k=k_{1}\delta_{1}\\7\cdot2^{j-3}\leq
k_{1}\leq2^{j}}}e_{k}\otimes\hat{h}(k)\bigg\Vert _{L^{p}\left(\mathbb{T}^{n}.E\right)  }^{q}. \label{4.49}%
\end{align}
Similarly we obtain that
\begin{align}
\left\Vert h\right\Vert _{B_{p,q}^{s}(\mathbb{T}^{n},E)}^{q}  &  =\sum
_{l\geq0}2^{qsl}\bigg\Vert \sum_{\substack{k=k_{1}\delta_{1},\\7\cdot
2^{j-3}\leq k_{1}\leq3\cdot2^{j-1}}}e_{k}\otimes\phi_{l}(k)\hat{h}%
(k)\bigg\Vert _{L^{p}\left(  \mathbb{T}^{n}.E\right)  }^{q}\nonumber\\
&  =2^{qsj}\bigg\Vert \sum_{\substack{k=k_{1}\delta_{1},\\7\cdot2^{j-3}\leq
k_{1}\leq3\cdot2^{j-1}}}e_{k}\otimes\hat{h}(k)\bigg\Vert _{L_{p}(\mathbb{T}^{n},E)}^{q}. \label{4.50}%
\end{align}
From Theorem \ref{teor criterio multiplicador}, \eqref{4.49} and \eqref{4.50}
it follows that
\begin{equation}
\bigg\Vert \sum_{\substack{k=k_{1}\delta_{1},\\7\cdot2^{j-3}\leq k_{1}\leq2^{j}}}e_{k}\otimes x_{k}\bigg\Vert _{L^{p}\left(  \mathbb{T}^{n}.E\right)  }\leq C\bigg\Vert \sum_{\substack{k=k_{1}\delta_{1}%
,\\7\cdot2^{j-3}\leq k_{1}\leq3\cdot2^{j-1}}}e_{k}\otimes x_{k}\bigg\Vert
_{L^{p}\left(  \mathbb{T}^{n}.E\right)  }. \label{ec4.4-1}%
\end{equation}
From \eqref{ec4.4-1} we can write%
\begin{equation*}
\Big\Vert \sum_{\ell=7\cdot2^{j-3}}^{2^{j}}e_{\ell}\otimes x_{\ell}\Big\Vert
_{L^{p}\left(  \mathbb{T},E\right)  }\leq C_{n}\Big\Vert \sum_{\ell=7\cdot
2^{j-3}}^{3\cdot2^{j-1}}e_{\ell}\otimes x_{\ell}\Big\Vert _{L^{p}\left(
\mathbb{T},E\right)  } \label{ec4.4-2}%
\end{equation*}
for all $\left(  x_{\ell}\right)  _{\ell\in\mathbb{N}_{0}}\subset E$ and therefore
\begin{equation}
\Big\Vert \sum_{k\in\left[  0,2^{j-3}\right]  }e_{k}\otimes x_{k}\Big\Vert
_{L^{p}(\mathbb{T},E)}\leq C_{n}\Big\Vert \sum_{k\in\left[  -2^{j-3},2^{j-1}\right]  }e_{k}\otimes x_{k}\Big\Vert _{L^{p}\left(  \mathbb{T},E\right)  } \label{ne5}%
\end{equation}
for all $\left(  x_{k}\right)  _{k\in\mathbb{N}_{0}}\subset E$.\\
\\
Now, let $f=\sum\limits_{k\in\left[  -N,N\right]  }e_{k}\otimes x_{k}\in\mathcal{T}(\mathbb{T},E)$ and set $x_k=0$ for $k\notin [-N,N]$. There exists some $j_{N}\geq3$ such that
$N\leq2^{j_{N}-3}$ and
\begin{align*}
\Big\Vert \sum_{k\in\left[  0,2^{j_{N}-3}\right]  }e_{k}\otimes
x_{k}\Big\Vert _{L^{p}\left(  \mathbb{T}.E\right)  } & \overset
{\text{(\ref{ne5})}}{\leq}C_{n}\Big\Vert \sum_{k\in\left[  -2^{j_{N}%
-3},2^{j_{N}-1}\right]  }e_{k}\otimes x_{k}\Big\Vert _{L^{p}\left(
\mathbb{T},.E\right)  }\\
& \!\!\quad = \  C_{n}\,\Vert f \Vert _{L^{p}\left(  \mathbb{T},E\right)  }\text{.}%
\end{align*}
Therefore $E$ is a $UMD$-space due to Corollary
\ref{Teorema1-caracterizacion-UMD}.
\end{proof}

\begin{remark}
\label{Remark desigualdad multiplier vs symbol}In the proof of Theorem
\ref{RESULTADO-PRINCIPAL} we have proved that if $E$ is a $UMD-$space,
$s\in\mathbb{R}$, $1<p<\infty$, $1\leq q\leq\infty$ and $M$ is of bounded
variation with respect to the coarse decomposition of $\mathbb{Z}^{n}$, then
there exists $C>0$ such that%
\[
\left\Vert S_{M}\right\Vert _{{\mathcal{L}}\left(  B_{p,q}^{s}(\mathbb{T}^{n},E)\right)  }\leq C\sup\limits_{d\in\mathbb{N}_{0}}\var\limits_{D_{d}}\,M\text{.}%
\]

\end{remark}

\begin{remark}
\label{Remark bounded variation}Let $M:\mathbb{Z}^{n}\longrightarrow
{\mathcal{L}(E,F)}$ be uniformly bounded.

\begin{itemize}
\item[a)] As a particular case of the proof of Theorem 3.24 a) in
\cite{Na12}, it holds that $M$ is of bounded variation with $\sup\limits_{d\in
\mathbb{N}_{0}}\var\limits_{D_{d}}\,M\leq 2^{3n+1}$, if the set%
\[
\left\{  \left\vert k\right\vert ^{\left\vert \gamma_{k}\right\vert }\Delta
M_{D_{d}}\left(  k\right)  :d\in\mathbb{N}_{0}\text{ and }k\in D_{d}\right\}
\]
is uniformly bounded.

\item[b)] It is easy to see that%
\begin{align*}
\big\{  \left\vert k\right\vert ^{\left\vert \gamma_{k}\right\vert }\Delta
M_{D_{d}} & \left(  k\right)\, :  \, d\in\mathbb{N}_{0}\text{ and }k\in D_{d}\big\}\\
& \subset\big\{  \left\vert k\right\vert ^{\left\vert \gamma\right\vert }\Delta
M\left(  k\right)  :k\in\mathbb{Z}^{n}\text{ and }\gamma\in\left\{
0,1\right\}  ^{n}\big\}  \text{.}%
\end{align*}

\end{itemize}
\end{remark}

\section{Periodic boundary valued problems}

In this section we will study  the existence and uniqueness of solution for the
problems \eqref{Prob1} and \eqref{Prob2}. Note that $A(t)$ given in \eqref{Glei differoperat} is a (formal) lineal differential operator with $\mathcal{L}\left(  E\right)
-$valued coefficients, where $a^{0}:\left[  0,\infty\right)  \times
\mathbb{R}^{n}\rightarrow\mathcal{L}\left(  E\right)  $,%
\begin{equation}
a^{0}\left(  t,\xi\right)  :=\sum_{\left\vert \alpha\right\vert =m}a_{\alpha
}\left(  t\right)  \xi^{\alpha}\label{Gleichung hauptsymbol}%
\end{equation}
is called its \emph{principal symbol}.

For $\theta\in\left[  0,\pi\right]  $, set $\sum_{\theta}:=\left\{  \lambda
\in\mathbb{C}:\left\vert \arg\lambda\right\vert \leq\theta\right\}
\cup\left\{  0\right\}  $. Given $\kappa\geq1$ and $\theta\in\left[
0,\pi\right)  $, the operator $A$ is called (uniformly\textbf{)} $\left(
\kappa,\theta\right)  -$elliptic if $\sum_{\theta}\subset\rho\left(
-a^{0}\left(  t,\xi\right)  \right)  $ and%
\begin{equation}
\Big\Vert \left[  \lambda I+a^{0}\left(  t,\xi\right)  \right]
^{-1}\Big\Vert _{\mathcal{L}\left(  E\right)  }\leq\frac{\kappa
}{1+\left\vert \lambda\right\vert }\quad\text{ for all }\lambda\in\sum
\nolimits_{\theta}\label{Ungleich (k,teta)-elliptisch}%
\end{equation}
and $\left(  t,\xi\right)  \in\left[  0,\infty\right)  \times\mathbb{R}^{n}$
with $\left\vert \xi\right\vert =1$. It is called $\theta-$elliptic, if it
is\textbf{ }$\left(  \kappa,\theta\right)  -$elliptic for some $\kappa\geq1$,
and normally-elliptic if it is $\frac{\pi}{2}-$elliptic.

\begin{remark}[{\cite{Am01}, Remarks 3.1}]\label{Bem. eigenschaft ellipt differ}\ 
\begin{itemize} 
\item[a)] Condition
($\ref{Ungleich (k,teta)-elliptisch}$) is equivalent to%
\[
\Big\Vert \left[  \lambda I+a^{0}\left(  t,\xi\right)  \right]
^{-1}\Big\Vert _{\mathcal{L}\left(  E\right)  }\leq\frac{\kappa}{\left\vert
\xi\right\vert ^{m}+\left\vert \lambda\right\vert }%
\]
for all $\lambda\in\sum_{\theta}$ and $\left(  t,\xi\right)  \in\left[
0,\infty\right)  \times\mathbb{R}^{n}$ with $\xi\neq0$.

\item[b)] The order $m$ is even whenever $A$ is normally elliptic.
\end{itemize}
\end{remark}

\begin{remark}
\label{Remark estimate for elliptic op}Let $A$ be uniformly $\left(
\kappa,\theta\right)  -$elliptic, $a\left(  t,\xi\right)  :=\sum_{\left\vert
\alpha\right\vert \leq m}a_{\alpha}\left(  t\right)  \xi^{\alpha}$ and
$b:=a-a^{0}$. Due to%
\[
\lambda I+a\left(  t,\xi\right)  =\left[  I+b\left(  t,\xi\right)  \left(
\lambda I+a^{0}\left(  t,\xi\right)  \right)  ^{-1}\right]  \left(  \lambda
I+a^{0}\left(  t,\xi\right)  \right)  \text{,}%
\]
by a Neumann series argument, there exists some $\omega_{0}>0$ such that%
\begin{equation}
\Big\Vert \left[  \lambda I+a\left(  t,\xi\right)  \right]  ^{-1}\Big\Vert
_{\mathcal{L}\left(  E\right)  }\leq\frac{2\kappa}{\left\vert \xi\right\vert
^{m}+\left\vert \lambda\right\vert } \label{ec estimate for elliptic op}%
\end{equation}
for all $\lambda\in\omega_{0}+\sum_{\theta}$ and $\left(  t,\xi\right)
\in\left[  0,\infty\right)  \times\mathbb{R}^{n}$.
\end{remark}

\begin{proposition}
\label{Prop for example}Let $s\in\mathbb{R}$, $1<p<\infty$, $1\leq q\leq
\infty$, $E$  a $UMD-$space, $A$  an uniformly $\left(  \kappa
_{0},\theta\right)$-elliptic differential operator satisfaying
$\sum_{\left\vert \alpha\right\vert \leq m}\left\Vert a_{\alpha}\right\Vert
_{\infty}\leq C$, and let%
\[
\mathcal{A}:=\mathcal{A}_{B_{p,q}^{s}}:B_{p,q}^{s+m}(\mathbb{T}^{n}%
,E)\rightarrow B_{p,q}^{s}(\mathbb{T}^{n},E),\quad u\longmapsto Au,
\]
be the $B_{p,q}^{s}-$realization of $A$. Then there exist \ $\kappa\geq1$ and
$\omega_{0}>0$ such that $\omega_{0}+\sum_{\theta}\subset\rho\left(
-\mathcal{A}\left(  t\right)  \right)  $ and%
\begin{equation}
\Big\Vert \left(  \lambda I+\mathcal{A}\left(  t\right)  \right)
^{-1}\Big\Vert _{\mathcal{L}\left(  B_{p,q}^{s}(\mathbb{T}^{n},E)\right)
}\leq\frac{\kappa}{1+\left\vert \lambda\right\vert }
\label{ec genera semigroup on Besov}%
\end{equation}
for all $\lambda\in\omega_{0}+\sum\nolimits_{\theta}$  and $t\geq0$. 
In particular, each $\mathcal{A}\left(  t\right)  $ generates an analytic
semigroup on $B_{p,q}^{s}(\mathbb{T}^{n},E)$, if $A$ is uniformly normally elliptic.
\end{proposition}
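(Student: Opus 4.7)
The plan is to realize the candidate resolvent as a Fourier multiplier operator and to invoke Theorem~\ref{RESULTADO-PRINCIPAL} together with Remark~\ref{Remark desigualdad multiplier vs symbol}. By Remark~\ref{Remark estimate for elliptic op} I would first choose $\omega_{0}\geq 1$ so that for every $\lambda\in\omega_{0}+\sum_{\theta}$, every $t\geq 0$ and every $k\in\Z^{n}$, $\lambda I+a(t,k)$ is invertible in $\mathcal{L}(E)$ with
\[
\| N_{\lambda,t}(k)\|_{\mathcal{L}(E)}\leq\frac{2\kappa_{0}}{|k|^{m}+|\lambda|},
\qquad N_{\lambda,t}(k):=(\lambda I+a(t,k))^{-1}.
\]
The goal is then to prove that $N_{\lambda,t}$ satisfies the variational hypothesis \eqref{generalizacion de la condicion variacional de Marcinkiewics} with constant $C/(1+|\lambda|)$, uniformly in $t\geq 0$.

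By Remark~\ref{Remark bounded variation} it suffices to control $|k|^{|\gamma|}\|\Delta^{\gamma}N_{\lambda,t}(k)\|$ uniformly for $k\in\Z^{n}$ and $\gamma\in\{0,1\}^{n}$. The key tool is the resolvent identity
\[
N_{\lambda,t}(k)-N_{\lambda,t}(k-\delta_{j})
=N_{\lambda,t}(k)\bigl[a(t,k-\delta_{j})-a(t,k)\bigr]N_{\lambda,t}(k-\delta_{j}),
\]
applied along the nonzero components of $\gamma$ by means of a discrete Leibniz rule. This produces a finite sum whose dominant term is a product of $|\gamma|+1$ resolvents and $|\gamma|$ first-order symbol differences; since $a(t,\cdot)$ is a polynomial of degree $m$ in $\xi$ with coefficients uniformly bounded in $t$, each such difference is of order $(1+|k|)^{m-1}$, yielding
\[
\|\Delta^{\gamma}N_{\lambda,t}(k)\|\,\lesssim\,\frac{(1+|k|)^{|\gamma|(m-1)}}{(|k|^{m}+|\lambda|)^{|\gamma|+1}}.
\]
The elementary inequality $(|k|^{m}+|\lambda|)^{|\gamma|+1}\geq(|\gamma|+1)|k|^{|\gamma|m}|\lambda|$ then gives the required bound $|k|^{|\gamma|}\|\Delta^{\gamma}N_{\lambda,t}(k)\|\lesssim 1/(1+|\lambda|)$.

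With this in hand, Theorem~\ref{RESULTADO-PRINCIPAL} and Remark~\ref{Remark desigualdad multiplier vs symbol} give $\|S_{N_{\lambda,t}}\|_{\mathcal{L}(B_{p,q}^{s}(\T^{n},E))}\leq\kappa/(1+|\lambda|)$. Running exactly the same argument for the lifted symbol $\langle k\rangle^{m}N_{\lambda,t}(k)$ (whose modulus and differences are merely uniformly bounded in $\lambda$, because the extra factor $\langle k\rangle^{m}$ is absorbed by $|k|^{m}+|\lambda|$) shows that $S_{N_{\lambda,t}}$ actually maps $B_{p,q}^{s}(\T^{n},E)$ into $B_{p,q}^{s+m}(\T^{n},E)$. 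A direct comparison of Fourier coefficients then identifies $S_{N_{\lambda,t}}$ with $(\lambda I+\mathcal{A}(t))^{-1}$: for $u:=S_{N_{\lambda,t}}f$ one has $\bigl[(\lambda I+\mathcal{A}(t))u\bigr]^{\wedge}(k)=(\lambda I+a(t,k))\hat{u}(k)=\hat{f}(k)$, and analogously for the left inverse. This yields $\omega_{0}+\sum_{\theta}\subset\rho(-\mathcal{A}(t))$ together with \eqref{ec genera semigroup on Besov}. The main obstacle is the multi-index telescoping of $\Delta^{\gamma}N_{\lambda,t}$ and the careful book-keeping of the polynomial factors, together with the uniformity in $t$; the final assertion, that $\mathcal{A}(t)$ generates an analytic semigroup on $B_{p,q}^{s}(\T^{n},E)$ in the normally elliptic case $\theta=\pi/2$, is then a classical consequence of the shifted sectoriality expressed by \eqref{ec genera semigroup on Besov}.
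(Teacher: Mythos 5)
Your proposal is correct and follows essentially the same route as the paper: one verifies the bounded-variation condition \eqref{generalizacion de la condicion variacional de Marcinkiewics} for the resolvent symbol via the discrete quotient rule for $\Delta^{\gamma}(S^{-1})$ (the paper cites Lemma~\ref{Lemma 7.1 Nau12} for exactly the iterated resolvent identity you describe) together with the ellipticity estimate \eqref{ec estimate for elliptic op}, and then applies Theorem~\ref{RESULTADO-PRINCIPAL} and Remark~\ref{Remark desigualdad multiplier vs symbol}; the only cosmetic difference is that the paper works with $M_{\lambda,t}=\lambda(\lambda+a(t,\cdot))^{-1}$ and shows its variation is bounded uniformly in $\lambda$, whereas you work with $(\lambda+a(t,\cdot))^{-1}$ and extract the factor $(1+|\lambda|)^{-1}$ afterwards. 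Your additional step identifying $S_{N_{\lambda,t}}$ with the actual resolvent of $\mathcal{A}(t)$ (via the lifted symbol $\langle k\rangle^{m}N_{\lambda,t}$ and comparison of Fourier coefficients) is a point the paper leaves implicit, and is a welcome completion rather than a deviation.
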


For the proof of this proposition we will use the following notations and
lemma, whose proof can be found in \cite{Na12}.

Giving $\alpha\in\mathbb{N}_{0}^{n}\backslash\left\{  0\right\}  $, let%
\[
\mathcal{Z}_{\alpha}:=\Big\{  \mathcal{W=}\left(  w^{1},...,w^{r}\right)
:1\leq r\leq\left\vert \alpha\right\vert ,0<w^{j}\leq\alpha,\sum_{j=1}%
^{r}w^{j}=\alpha\Big\}
\]
denote the set of all additive decompositions of $\alpha$ into
$r=r_{\mathcal{W}}$ multi-indices. For the sake of consistence we set
$\mathcal{Z}_{0}:=\left\{  \emptyset\right\}  $ and $r_{\emptyset}:=0$. For
$\mathcal{W=}\left(  w^{1},...,w^{r}\right)  \in\mathcal{Z}_{\alpha}$ let
$w_{\ast}^{j}$ be defined by%
\[
w_{\ast}^{j}:=\sum_{l=j+1}^{r}w^{l}\text{.}%
\]

\begin{lemma}[\cite{Na12}, Lemma 7.1c]\label{Lemma 7.1 Nau12}
Let $S:\mathbb{Z}^{n}\to\mathcal{L}\left(  E,F\right)  $ be a function such that the
inverse $\left(  S^{-1}\right)  (k):=\left(  S(k)\right)  ^{-1}$ exists for
all $k\in\mathbb{Z}^{n}$. Then for $\alpha\in\mathbb{N}_{0}^{n}$, we have%
\[
\Delta^{\alpha}\left(  S^{-1}\right)  (k)=\sum_{\mathcal{W}\in\mathcal{Z}%
_{\alpha}}\left(  -1\right)  ^{r_{\mathcal{W}}}\left(  S^{-1}\right)
(k-\alpha)
{\displaystyle\prod\limits_{j=1}^{r_{\mathcal{W}}}}
\Big(  \big(  \Delta^{w^{j}}S\big)  S^{-1}\Big)(k-w_{\ast}%
^{j})%
\]
for $k\in\mathbb{Z}^{n}$.
\end{lemma}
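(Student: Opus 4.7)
The plan is induction on $|\alpha|$, using the identity $SS^{-1}\equiv I$ together with a non-commutative discrete Leibniz rule. The base case $\alpha=0$ is immediate: $\mathcal{Z}_0=\{\emptyset\}$ with $r_\emptyset=0$ and empty product $I$, so the right-hand side collapses to $S^{-1}(k)=\Delta^0(S^{-1})(k)$.

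For the inductive step I would first establish the operator-valued Leibniz expansion
\[
\Delta^{\alpha}(fg)(k) \;=\; \sum_{\beta+\gamma=\alpha}(\Delta^\beta f)(k-\gamma)\,(\Delta^\gamma g)(k),
\]
obtained by iterating the rule $\Delta^{\delta_j}(fg)(k)=(\Delta^{\delta_j}f)(k)\,g(k)+f(k-\delta_j)\,(\Delta^{\delta_j}g)(k)$, in which translations accumulate only on the left factor. Applied to $S\cdot S^{-1}\equiv I$, this gives $\sum_{\beta+\gamma=\alpha}(\Delta^\beta S)(k-\gamma)(\Delta^\gamma S^{-1})(k)=0$ for $\alpha\neq 0$, and isolating the $\beta=0$ contribution $S(k-\alpha)\Delta^\alpha(S^{-1})(k)$ yields the recursion
\[
\Delta^\alpha(S^{-1})(k) \;=\; -\,S^{-1}(k-\alpha)\!\!\sum_{\substack{\beta+\gamma=\alpha\\\beta>0}}\!\!(\Delta^\beta S)(k-\gamma)\,(\Delta^\gamma S^{-1})(k).
\]

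Next I would substitute the induction hypothesis into each $\Delta^\gamma(S^{-1})(k)$ (legitimate because $|\gamma|<|\alpha|$), producing a double sum over pairs $(\beta,\mathcal{W}')$ with $\beta>0$ and $\mathcal{W}'\in\mathcal{Z}_{\alpha-\beta}$. The combinatorial heart of the argument is the prepend-$\beta$ bijection
\[
(\beta,\mathcal{W}') \;\longmapsto\; \mathcal{W}:=(\beta,w'^{1},\dots,w'^{r_{\mathcal{W}'}})\;\in\;\mathcal{Z}_\alpha.
\]
Under this correspondence $r_{\mathcal{W}}=r_{\mathcal{W}'}+1$ (so the extra sign combines with $(-1)^{r_{\mathcal{W}'}}$ to give $(-1)^{r_{\mathcal{W}}}$), the shift $w^1_{\ast}=\gamma$ makes the factor $(\Delta^\beta S)(k-\gamma)\,S^{-1}(k-\gamma)$ coincide with the first block $(\Delta^{w^1}S)\,S^{-1}(k-w^1_{\ast})$, and $w^l_{\ast}=w'^{(l-1)}_{\ast}$ for $l\ge 2$ keeps the remaining blocks aligned. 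Re-indexing the double sum by $\mathcal{W}\in\mathcal{Z}_\alpha$ produces exactly the asserted formula.

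The main obstacle is the shift bookkeeping intrinsic to the non-commutative setting: unlike the scalar case, one must track carefully on which factor each translation acts, both in deriving the Leibniz expansion and in verifying that the factors $(\Delta^{w^l}S)(k-w^l_{\ast})\,S^{-1}(k-w^l_{\ast})$ produced by the substitution line up with the corresponding blocks in the claimed product. Once the convention that shifts migrate only to the left factor is fixed, the prepend-$\beta$ bijection together with the matching of signs, shifts, and product order closes the induction transparently.
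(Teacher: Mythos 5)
The paper never proves this lemma itself --- it is quoted from \cite{Na12} --- so your argument has to stand on its own. Your overall strategy is the natural one and its combinatorial core is sound: the recursion obtained by applying a discrete Leibniz rule to $SS^{-1}\equiv I$ and isolating the $\beta=0$ term, the substitution of the induction hypothesis, and the prepend bijection $(\beta,\mathcal{W}')\mapsto(\beta,w'^{1},\dots,w'^{r_{\mathcal{W}'}})$ with the shift bookkeeping $w_{\ast}^{1}=\gamma$ and $r_{\mathcal{W}}=r_{\mathcal{W}'}+1$ all check out.

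There is, however, a genuine gap at the level of generality claimed. Your Leibniz expansion
$\Delta^{\alpha}(fg)(k)=\sum_{\beta+\gamma=\alpha}(\Delta^{\beta}f)(k-\gamma)(\Delta^{\gamma}g)(k)$
is false as soon as some component $\alpha_{j}\geq 2$: already
\[
\Delta^{2\delta_{j}}(fg)(k)=(\Delta^{2\delta_{j}}f)(k)\,g(k)+2\,(\Delta^{\delta_{j}}f)(k-\delta_{j})(\Delta^{\delta_{j}}g)(k)+f(k-2\delta_{j})(\Delta^{2\delta_{j}}g)(k),
\]
so the multinomial coefficients $\binom{\alpha}{\beta}$ are missing. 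Consequently your induction only establishes the formula for $\alpha\in\{0,1\}^{n}$, where all these coefficients equal one. In fact the identity as transcribed in the statement is itself false outside $\{0,1\}^{n}$: for $n=1$, $\alpha=2$ and the scalar function $S(k)=2^{k}$ the right-hand side is
$-S^{-1}(k-2)(\Delta^{2}S)(k)S^{-1}(k)+S^{-1}(k-2)\bigl((\Delta S)S^{-1}\bigr)(k-1)\bigl((\Delta S)S^{-1}\bigr)(k)=-2^{-k}+2^{-k}=0$,
whereas $\Delta^{2}S^{-1}(k)=2^{-k}$; the decomposition $\mathcal{W}=(1,1)$ would need to be counted with multiplicity $2$. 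Since the paper only ever invokes the lemma with $\gamma\in\{0,1\}^{n}$ (in the proof of Proposition \ref{Prop for example}), your proof does cover every case that is actually used; but you should either restrict the statement to $\alpha\in\{0,1\}^{n}$ or insert the appropriate multinomial weights both in your Leibniz rule and in the asserted formula before claiming it for all $\alpha\in\mathbb{N}_{0}^{n}$.
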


\begin{proof}
[Proof of Proposition \ref{Prop for example}]Let $a$ be as in Remark
\ref{Remark estimate for elliptic op} and $\gamma\in\left\{  0,1\right\}
^{n}$. For $\lambda\in\omega_{0}+\sum_{\theta}$ and $t\geq0$, we define
$M_{\lambda,t}\left(  k\right)  :=\lambda\left(  \lambda+a\left(
t,\cdot\right)  \right)  ^{-1}\left(  k\right)  $, $k\in\mathbb{Z}^{n}$. Using
Lemma \ref{Lemma 7.1 Nau12}, the triangular inequality, the fact that
$\Delta^{w^{j}}\big(  k-w_{\ast}^{j}\big)^{\alpha}$ is a polynomial in
$k-w_{\ast}^{j}$ of degree not greater than $\vert \alpha\vert -\vert
w^{j}\vert $ and (\ref{ec estimate for elliptic op}), we obtain for all
$k\in\mathbb{Z}^{n}$ that%
\begin{align*}
&  \left\vert k\right\vert ^{\left\vert \gamma\right\vert }\left\Vert
\Delta^{\gamma}M_{\lambda,t}\left(  k\right)  \right\Vert \\
&  \leq\left\vert \lambda\right\vert \left\vert k\right\vert ^{\left\vert
\gamma\right\vert }\sum_{\mathcal{W}\in\mathcal{Z}_{\gamma}}\big\Vert \left(
\lambda+a\left(  t,k-\gamma\right)  \right)  ^{-1}\big\Vert\cdot\\
&\qquad \qquad \qquad \qquad  \cdot{\displaystyle\prod\limits_{j=1}^{r_{\mathcal{W}}}}
\big(  \big\Vert \Delta^{w^{j}}a\left(  t,k-w_{\ast}^{j}\right)  \big\Vert
\big\Vert \left(  \lambda+a\left(  t,k-w_{\ast}^{j}\right)  \right)
^{-1}\big\Vert \big) \\
&  \leq C\left\vert \lambda\right\vert \left\vert k\right\vert ^{\left\vert
\gamma\right\vert }\sum_{\mathcal{W}\in\mathcal{Z}_{\gamma}}\big\Vert \left(
\lambda+a\left(  t,k-\gamma\right)  \right)^{-1}\big\Vert\cdot \\
& \qquad \qquad \qquad \qquad \cdot {\displaystyle\prod\limits_{j=1}^{r_{\mathcal{W}}}}
\Big(  \sum_{\left\vert \alpha\right\vert \leq m}\big\vert \Delta^{w^{j}}\left(  k-w_{\ast}^{j}\right)^{\alpha}\big\vert \big\Vert \left(
\lambda+a\left(  t,k-w_{\ast}^{j}\right)  \right)  ^{-1}\big\Vert \Big) \\
&  \leq C2\kappa\left\vert k\right\vert ^{\left\vert \gamma\right\vert }%
\sum_{\mathcal{W}\in\mathcal{Z}_{\gamma}}\frac{\left\vert \lambda\right\vert}{\left\vert k-\gamma\right\vert ^{m}+\left\vert \lambda\right\vert }\cdot\\
& \qquad \qquad \qquad \qquad \cdot{\displaystyle\prod\limits_{j=1}^{r_{\mathcal{W}}}}
\Big(  \sum_{\left\vert \alpha\right\vert \leq m}\sum\limits_{\text{finite}}c_{\alpha,w^{j}}\vert k-w_\ast^j\vert ^{m-\vert w^{j}\vert
}\frac{2\kappa}{\vert k-w_\ast^j\vert ^{m}+\vert \lambda\vert
}\Big) \\
&  \leq C_{\kappa}\left\vert k\right\vert ^{\left\vert \gamma\right\vert }%
\sum_{\mathcal{W}\in\mathcal{Z}_{\gamma}}C_{\mathcal{W}}\frac{\vert\lambda\vert }{\vert k-\gamma\vert ^{m}+\vert \lambda\vert }%
{\displaystyle\prod\limits_{j=1}^{r_{\mathcal{W}}}}
\Big(  \vert k-w_\ast^j\vert ^{-\vert w^{j}\vert }%
\frac{\vert k-w_\ast^j\vert^{m}}{\vert k-w_\ast^j\vert^{m}+\vert\lambda\vert }\Big) \\
&  \leq C_{\kappa}\left\vert k\right\vert ^{\left\vert \gamma\right\vert }%
\sum_{\mathcal{W}\in\mathcal{Z}_{\gamma}}C_{\mathcal{W}}%
{\displaystyle\prod\limits_{j=1}^{r_{\mathcal{W}}}}
\vert k-w_\ast^j\vert ^{-\vert w^{j}\vert }\leq\widehat
{C}_{\kappa}\text{,}%
\end{align*}
where $\widehat{C}_{\kappa}$ is a constant which do not depend on $\lambda$ and $t$, and $\Vert \cdot\Vert$ abbreviates $\Vert \cdot\Vert_{\mathcal{L}(E)}$. It follows that $M_{\lambda,t}$ is of bounded variation due to Remark
\ref{Remark bounded variation}. Thus Theorem \ref{RESULTADO-PRINCIPAL} implies
that $M_{\lambda,t}$ is a discrete Fourier multiplier on $B_{p,q}%
^{s}(\mathbb{T}^{n},E)$ and (\ref{ec genera semigroup on Besov}) holds due to
Remark \ref{Remark desigualdad multiplier vs symbol}.
\end{proof}

\begin{corollary}\label{Corollary-application}
Let $0<\rho<1$, $s\in\mathbb{R}$, $1<p<\infty$, $1\leq q\leq\infty$, $E$
a $UMD$-space and $A$ a uniformly normally elliptic differential
operator satisfaying%
\begin{equation}
\left(  t\mapsto a_{\alpha}(t)\right)  \in C^{\rho}\left(  \left[
0,T\right]  ,\mathcal{L}\left(  E\right)  \right)
\label{ec condition symbol hoelder}%
\end{equation}
for all $\left\vert \alpha\right\vert \leq m$.

\begin{itemize}
\item[a)] If $f\in C^{\rho}\left(  \left[  0,T\right]  ,B_{p,q}^{s}\left(
\mathbb{T}^{n},E\right)  \right)  $, then the problem \eqref{Prob1} has a
unique classical solution%
\[
u\in C^{\rho}\left(  \left(  0,T\right]  ,B_{p,q}^{m+s}\left(  \mathbb{T}%
^{n},E\right)  \right)  \cap C^{1+\rho}\left(  \left(  0,T\right]
,B_{p,q}^{s}\left(  \mathbb{T}^{n},E\right)  \right)  \text{.}%
\]

\item[b)] If $s_{1}\in\mathbb{R}$, $1\leq p_{1},q_{1}\leq\infty$ and
$\omega_{0}$ as in Proposition \ref{Prop for example}, then for each $f\in
B_{p_{1},q_{1}}^{s_{1}}\left(  \mathbb{T},B_{p,q}^{s}\left(  \mathbb{T}%
^{n},E\right)  \right)  $ and $\omega\geq\omega_{0}$ there exists a unique
$u\in B_{p_{1},q_{1}}^{1+s_{1}}\left(  \mathbb{T},B_{p,q}^{s}\left(
\mathbb{T}^{n},E\right)  \right)  $ such that $u(t)+A_{\omega}u(t)=f(t)$ for
almost all $t\in\left[  0,2\pi\right]  $. In this sense $u$ is the unique
solution for the problem \eqref{Prob2}.
\end{itemize}
\end{corollary}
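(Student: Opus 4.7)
For part (a), the plan is to regard Problem \eqref{Prob1} as a non-autonomous abstract parabolic Cauchy problem in the Banach space $X:=B_{p,q}^{s}(\mathbb{T}^{n},E)$ driven by the family $\{\mathcal{A}(t)\}_{t\in[0,T]}$ furnished by Proposition \ref{Prop for example}. That proposition already supplies the essential input: each $-\mathcal{A}(t)$ is sectorial on $X$ with a uniform resolvent estimate on a common sector $\omega_{0}+\sum_{\pi/2}$, and hence generates an analytic semigroup. The H\"older assumption \eqref{ec condition symbol hoelder} on the coefficients $a_\alpha$ transfers directly to $\mathcal{A}(\cdot)\in C^{\rho}([0,T],\mathcal{L}(B_{p,q}^{m+s}(\mathbb{T}^n,E),B_{p,q}^{s}(\mathbb{T}^n,E)))$, since the only $t$-dependence sits in those coefficients. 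One then invokes the standard Acquistapace--Terreni/Sobolevskii theory of parabolic evolution equations with H\"older continuous generators (in the form used, e.g., in \cite{Am01}) to obtain the unique classical solution with the stated regularity $u\in C^{\rho}((0,T],B_{p,q}^{m+s})\cap C^{1+\rho}((0,T],B_{p,q}^{s})$.

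For part (b), the plan is to solve \eqref{Prob2} by a Fourier-multiplier argument in the single time variable with values in $Y:=B_{p,q}^{s}(\mathbb{T}^{n},E)$. Expanding the candidate solution as $u=\sum_{k\in\mathbb{Z}}e_{k}\otimes\hat u(k)$ and substituting into $\partial_{t}u+A_{\omega}u=f$ forces the coefficients to satisfy $(ik+A_{\omega})\hat u(k)=\hat f(k)$, so one sets
\[
\hat u(k):=M(k)\hat f(k),\qquad M(k):=(ik+A_{\omega})^{-1}\in\mathcal{L}(Y),\quad k\in\mathbb{Z}.
\]
Proposition \ref{Prop for example}, applied with constant coefficients and with $\lambda=ik+\omega\in\omega_{0}+\sum_{\pi/2}$ (which holds since $\omega\geq\omega_{0}$), yields $\|M(k)\|_{\mathcal{L}(Y)}\le \kappa/(1+|k|)$. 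The task is then to show that both $M$ and the auxiliary multiplier $\widetilde M(k):=ikM(k)=I_{Y}-(\omega+A)M(k)$ are discrete Fourier multipliers on $B_{p_{1},q_{1}}^{s_{1}}(\mathbb{T},Y)$, since by the standard Bernstein-type characterization $\|\partial_{t}u\|_{B^{s_{1}}_{p_1,q_1}}\sim\|u\|_{B^{1+s_{1}}_{p_1,q_1}}$ this produces $u=S_{M}f\in B_{p_{1},q_{1}}^{1+s_{1}}(\mathbb{T},Y)$ with the desired maximal-regularity estimate $\|u\|_{B^{1+s_{1}}_{p_{1},q_{1}}}\le C\|f\|_{B^{s_{1}}_{p_{1},q_{1}}}$.

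The main obstacle is verifying the bounded variation condition \eqref{generalizacion de la condicion variacional de Marcinkiewics} for $M$ and $\widetilde M$ with respect to the (one-dimensional, dyadic) coarse decomposition of $\mathbb{Z}$, so that Theorem \ref{RESULTADO-PRINCIPAL} applies on $B_{p_{1},q_{1}}^{s_{1}}(\mathbb{T},Y)$. The key computation is the resolvent identity
\[
\Delta M(k)=M(k)-M(k-1)=-i\,M(k)\,M(k-1),
\]
which, combined with $\|M(k)\|\le \kappa/(1+|k|)$, gives $|k|\,\|\Delta M(k)\|=O(1/|k|)$, and a parallel short calculation yields $|k|\,\|\Delta\widetilde M(k)\|=O(1)$. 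In both cases the set $\{|k|^{|\gamma_{k}|}\Delta M_{D_{d}}(k):d\in\mathbb{N}_{0},\,k\in D_{d}\}$ is uniformly bounded, so Remark \ref{Remark bounded variation}(a) ensures that $M$ and $\widetilde M$ satisfy \eqref{generalizacion de la condicion variacional de Marcinkiewics}, and Theorem \ref{RESULTADO-PRINCIPAL} supplies the bounded operators $S_{M},S_{\widetilde M}\in\mathcal{L}(B_{p_{1},q_{1}}^{s_{1}}(\mathbb{T},Y))$. Uniqueness is then immediate from Theorem \ref{lema igualdad de distribuc}: any solution of the homogeneous equation would have $(ik+A_{\omega})\hat u(k)=0$ for every $k\in\mathbb{Z}$, and invertibility of $ik+A_{\omega}$ on $Y$ forces $\hat u(k)=0$, hence $u=0$.
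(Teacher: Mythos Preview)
Your treatment of part~(a) is essentially the same as the paper's: both reduce \eqref{Prob1} to a non-autonomous abstract parabolic problem via Proposition~\ref{Prop for example} and then invoke classical parabolic evolution theory. The paper cites Tanabe~\cite{Ta60} and Poulsen~\cite{Pou65}; your appeal to Acquistapace--Terreni/Sobolevskii belongs to the same circle of results.

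For part~(b) there is a genuine gap. You propose to apply Theorem~\ref{RESULTADO-PRINCIPAL} on $B_{p_{1},q_{1}}^{s_{1}}(\mathbb{T},Y)$ with $Y=B_{p,q}^{s}(\mathbb{T}^{n},E)$ in order to show that $M(k)=(ik+A_{\omega})^{-1}$ and $\widetilde M(k)=ikM(k)$ are Fourier multipliers. But Theorem~\ref{RESULTADO-PRINCIPAL} has two hypotheses you do not meet. First, it requires $1<p_{1}<\infty$, whereas the statement of part~(b) explicitly allows $p_{1}\in\{1,\infty\}$. Second, it requires the target space $Y$ to be UMD; you have not verified this, and in general $B_{p,q}^{s}(\mathbb{T}^{n},E)$ fails to be UMD when $q\in\{1,\infty\}$ (the $\ell^{q}$-component of the Besov norm obstructs it), which the corollary also permits. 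Thus your route does not cover the stated range of exponents.

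The paper avoids this by invoking Theorem~5.1 of \cite{AB04} directly. That result is an abstract maximal Besov regularity theorem for the periodic problem $u'+Bu=f$, $u(0)=u(2\pi)$, valid on an \emph{arbitrary} Banach space and for all $1\leq p_{1},q_{1}\leq\infty$; its hypothesis is precisely the sectoriality/resolvent estimate for $A_{\omega}$ on $Y$ delivered by Proposition~\ref{Prop for example}. In effect, \cite{AB04} uses a Mikhlin-type multiplier theorem on $B_{p_{1},q_{1}}^{s_{1}}(\mathbb{T},Y)$ that does not need UMD, rather than the Marcinkiewicz-type Theorem~\ref{RESULTADO-PRINCIPAL} of this paper, which does.
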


\begin{proof}\ 
\begin{itemize}
\item[a)] This is a consequence of Proposition \ref{Prop for example},
\eqref{ec condition symbol hoelder}, Theorems 1.2 and 1.3 in \cite{Ta60} and
S\"{a}tzes 4.11 and 4.12 in \cite{Pou65}.
\item[b)] This follows from Proposition \ref{Prop for example} and Theorem
5.1 in \cite{AB04}.
\end{itemize}
\end{proof}

\bibliographystyle{amsalpha}

\bigskip

\end{document}